\newcommand{\pocol}{{\bf $\text{ACol}$}}
\newcommand{\pohag}{{\bf $\text{ACol}_\bullet$}}
\newcommand{\cons}{\mathfrak{c}}
\newcommand{\acol}{\text{ACol}(C_n)}
\newcommand{\adm}{\text{ACol}(C_n)}
\newcommand{\bull}{\text{ACol}_\bullet(C_n)}
\newtheorem{prop}{Proposition}[section]
\newtheorem{thm}[prop]{Theorem}
\newtheorem{defin}[prop]{Definition}
\newtheorem{lemm}[prop]{Lemma}
\newtheorem{cor}[prop]{Corollary}
\newtheorem{remark}[prop]{Remark}
\newtheorem{exo}[prop]{Example}
\newenvironment{manualtheorem}[1]{%
	\manualtheoreminner
}{\endmanualtheoreminner}
\newenvironment{manualcorollary}[1]{%
	\manualtheoreminnerr
}{\endmanualtheoreminnerr}
\newenvironment{manuallemmata}[1]{%
	\manualtheoreminnerrr
}{\endmanualtheoreminnerrr}
\title{$C-$trees and a coherent presentation for the plactic monoid of type $C$}
\date{}
\author{Uran Meha}
\begin{document}
	
	\begin{abstract}
		In this article we introduce the $\mathbb{N}-$decorated plactic monoid of type $C$, denoted $Pl^\mathbb{N}(C_n)$, via a finite convergent presentation {\bf $\text{ACol}$}, with generating set $\adm$ consisting of admissible columns, and an element $\epsilon$. By Squier's coherent completion theorem, this presentation is extended into a coherent presentation by identifying a family of generating confluences, i.e. generating $3-$cells. Here the generating $3-$cells are critical branchings on words of length $3$. We adapt the notions of crystal structure to $\adm^*$, and show that the shape of $3-$cells is preserved by the action of Kashiwara operators. Thus we reduce the study of the coherent presentation to only describing the generating $3-$cells whose source is a word of highest weight. We then introduce combinatorial objects called $C-$trees which parameterize the words of highest weight in $\adm^*$. The $C-$trees allow for simplifying calculations with the insertion algorithm in type $C$, as introduced in by Lecouvey, and we prove that the generating $3-$cells in {\bf $\text{ACol}$} are of shape at most $(4,3)$. As a consequence, we show that the column presentation of $Pl(C_n)$, as introduced by Hage, has generating $3-$cells of shape at most $(4,3)$. This contrasts the situation in type $A$, where the $3-$cells in the column presentation of $Pl(A_n)$  are of shape at most $(3,3)$.
	\end{abstract}
	\maketitle
	\section{Introduction} \noindent In \cite{schensted1961longest}, Schensted in his study of determining the lengths of the longest decreasing and increasing subwords of a word, introduced the \textit{insertion algorithm} of a Young tableaux into another. This algorithm associates a semistandard Young tableau $P(w)$ to each word $w$ in the alphabet $A_n=\{1,2,\dots,n\}$. He also proved the \textit{Robinson-Schensted (RS) correspondence}, which establishes a bijection between the permutations of $A_n$ and pairs of standard Young tableaux $(P,Q)$ of the same shape. In \cite{knuth1970permutations}, Knuth describes the generating relations in $A_n^*$
	\begin{equation}\label{eq:knuth relations}
	\begin{array}{cc}
	xzy \sim zxy & \text{ if } x\leq y < z,\\
	yzx \sim yxz & \text{ if } x< y \leq z,
	\end{array}
	\end{equation}
	so that we have $P(w)=P(w')$ if and only if $w'$ is obtained by successively applying relations as in \eqref{eq:knuth relations} to $w$. Moreover, Knuth generalizes the RS correspondence to the \textit{Robinson-Schensted-Knuth (RSK) correspondence}, which establishes a bijection between words in $A_n^*$ and pairs of Young tableaux $(P,Q)$ of the same shape, where $P=P(w)$ is semistandard, and $Q$ is standard. In \cite{lascoux1981monoide}, Lascoux and Sch\"utzenberger define the \textit{plactic monoid} as $Pl(A_n)=(A_n^*/\equiv)$, where '$\equiv$' is the congruence on $A_n^*$ generated by the relations in \eqref{eq:knuth relations}. They used their construction to give a proof of the Littlewood-Richardson rule, which describes the decomposition of tensor product of irreducible $\mathfrak{gl}_n$-modules. In \cite{cain2019crystal} a \textit{column presentation} of $Pl(A_n)$ was introduced. In \cite{hage2017knuth}, using rewriting theory, the authors introduce extend this into a coherent presentation. This presentation has generators  the columns in $A_n^*$, and the generating $2-$cells are rewriting rules of the form $c_1c_2\Longrightarrow (c_1\leftarrow c_2)$, where $(c_1\leftarrow c_2)$ is the output of the Schensted insertion of $c_2$ into $c_1$. They show that this presentation is finite and convergent, and moreover they explicitly describe the generating $3-$cells, in other words they explicitly describe a finite coherent presentation of $Pl(A_n)$. In \cite{lebed2016plactic}, the column (and row) insertion are shown to induce certain braidings, which are then used to study $Pl(A_n)$ and calculate its cohomology.

	\noindent Kashiwara's introduction of the crystal bases for classical Lie algebras $\mathfrak{g}$ in \cite{kashiwara1990crystalizing} gives rise to crystal monoids associated to $\mathfrak{g}$. In type $A$, this crystal monoid is isomorphic to $Pl(A_n)$, meaning that there exists a presentation of this crystal monoid by generators and relations. In \cite{littelmann1996plactic}, using his own path model, Littelmann introduces a plactic algebra $\mathbb{Z}\mathcal{P}(\mathfrak{g})$ for any symmetrizable Kac-moody algebra $\mathfrak{g}$ over $\mathbb{C}$. In his Theorem B, he defines a presentation of $\mathbb{Z}\mathcal{P}(\mathfrak{g})$, which in type $A$ agrees with the column presentation.
	\smallskip
	
	\noindent Denote the plactic monoid of type $C$ by $Pl(C_n)$. Many of the relevant notions in $Pl(A_n)$ exist for $Pl(C_n)$ as well. Symplectic tableau were introduced in \cite{de1979symplectic}. In \cite{lascoux1995crystal} a presentation of $Pl(C_n)$ by generators and relations is given in the style of Knuth. In \cite{lecouvey2002schensted} this is proven to indeed be a presentation of $Pl(C_n)$. Moreover, Lecouvey defines a Schensted-like column insertion algorithm for symplectic tableaux, and uses it to establish a symplectc Robinson-Schensted correspondence. In \cite{hage2015finite} a \textit{column presentation} for $Pl(C_n)$ is given, to be denoted by {\bf $\text{ACol}_\bullet$} in this paper. Its generators are the non-empty admissible columns, and the relations are generated by rewriting rules of the form $c_1c_2\Longrightarrow (c_1\leftarrow c_2)$, where $(c_1\leftarrow c_2)$ denotes the column insertion of $c_2$ into $c_1$. This presentation is shown to be finite and convergent. 
	
	\smallskip
	\noindent Squier's Theorem from \cite{squier1994finiteness}, allows us to extend this presentation into a coherent one by identifying the confluence diagrams of a family of generating confluences. In our case, this family consists of the critical branchings of {\bf $\text{ACol}_\bullet$}.  The approach of using rewriting theory to explicit coherent presentation appears for example in \cite{gaussent2015coherent} for Artin monoids, in \cite{hage2017knuth} for the plactic monoid $Pl(A_n)$, and in \cite{hage2019coherence} for Chinese monoids. For $Pl(A_n)$, the critical branchings have sources of the form $c_1c_2c_3$ for $c_1,c_2,c_3$ columns in $A_n$. There are two reduction strategies of $c_1c_2c_3$ to its normal form. Each of these alternates insertions on the \textit{left pair} $c_1c_2$, and the \textit{right pair} $c_2c_3$. The \textit{leftmost reduction strategy} commences with the left pair, while the \textit{rightmost reduction strategy} commences with the right pair. In \cite{hage2017knuth}, all the possible such confluence diagrams for the column presentation of $Pl(A_n)$ have been computed, and shown to be of the form
	\begin{equation*}
	{\xymatrix{ & t'u'v \ar@{=>}[r] & t''u'''v' \ar@{=>}[rd]\\ tuv \ar@{=>}[ru]^{\alpha_{tu}v} \ar@{=>}[rd]_{t\alpha_{uv}}&  &  & t_0u_0v_0 \\ & t u_1 v_1 \ar@{=>}[r] &t_1u_2v_1 \ar@{=>}[ur]}}
	\end{equation*}
	where $t_0u_0v_0$ is the normal form of $tuv$, and we allow for some of the arrows to be identities. We say that the confluence diagrams are of shape $(3,3)$.
	
	\smallskip
	\noindent In this article we prove a similar result for the column presentation {\bf $\text{ACol}_\bullet$} of $Pl(C_n)$, namely we compute the general form of the confluence diagram to be $(4,3)$. The manner in which we arrive at this result can be summarized as follows
	
	\begin{itemize}
		\item[1.] Introduce the $\mathbb{N}-$decorated plactic monoid $Pl^\mathbb{N}(C_n)$ of type $C$ via a finite convergent presentation \pocol. Its generators are $\acol = \{\text{admissible columns}\}\sqcup \{\epsilon\}$, with $\epsilon$ signifying the an 'empty column', and as a set we have $Pl^\mathbb{N}(C_n)=Pl(C_n)\times \mathbb{N}$. We then adapt the Kashiwara crystal structure from $C_n^*$ to $\acol^*$ and obtain the notion of highest weight in $\acol^*$.
		\item[2.] Show that shapes of confluence diagrams in \pocol\   are stable under the application of Kashiwara operators. This reduces the problem to only computing confluence diagrams of critical branchings of highest weight.
		\item[3.] Introduce a combinatorial tool, called the $C-$trees, which parameterize the words of highest weight in $\adm^*$.
		\item[4.] Compute the confluence diagrams for the words of $C-$trees of rank $3$.
	\end{itemize}
	\subsection{Organisation of the article}
	\noindent In Section \ref{sec:prelims} we recall notions about $Pl(C_n)$, its crystal structure, insertion, and its presentations. In particular we consider certain words in $C_n^*$ called \textit{block columns}. These are column words whose letters are consecutive in the alphabet $C_n$. We derive an admissibility condition on products of such columns, and compute certain insertions with these columns.
	\smallskip
	\noindent In Section \ref{sec:aux results} we introduce the $\mathbb{N}-$decorated plactic monoid  and denote it by $Pl^\mathbb{N}(C_n)$. This monoid is very similar to $Pl(C_n)$ with its column presentation as in \cite{hage2015finite}, but here we add a generator $\epsilon$ thought of as an 'empty column', and rewriting rules involving it. This approach is motivated by that of Lebed in \cite{lebed2016plactic}, where she uses it to define a braiding on the columns and rows of the plactic monoid of type $A$ and computes the cohomology of $Pl(A_n)$. The upshot of this approach is that the insertion of an admissible column into an admissible column can then be considered as a map $\adm^2\longrightarrow \adm^2$, where $\adm$ is the set of admissible columns including $\epsilon$. In other words, the column presentation of $Pl(C_n)$ is adapted so that it can be considered a quadratic presentation. This covers for the situation where $(c_1\leftarrow c_2)=c$ is a single column, or the empty word in $Pl(C_n)$, and thus allows us to treat the insertion and the computation of confluence diagrams of critical branchings without distinguishing many cases.
	
	\noindent Using the column presentation \pohag\ of $Pl(C_n)$ as starting point, we thus define $Pl^\mathbb{N}(C_n)$ as the monoid presented by the $2-$polygraph \pocol\ $=\left(\{\ast\},\mathbf{ACol}_1,\mathbf{ACol}_2\right)$ where $\mathbf{ACol}_1=\adm$, and $\mathbf{ACol}_2$ consists of the rewriting rules $c_1c_2\Longrightarrow(c_1\leftarrow c_2)=d_1d_2$ for $c_1c_2$ not standard. We then prove the following
	
	\begin{manualtheorem}{\ref{thm:conv poly}}
		\label{thm:conv of dec}	The $2-$polygraph \pocol\  is finite and convergent.
	\end{manualtheorem}
	
	\noindent We adapt Kashiwara's crystal structure from $C_n^*$ to $\adm^*$ and define an \textit{extended crystal congruence } $\equiv_\epsilon$. We show that, as in the classical case of $Pl(C_n)$, the congruence $\equiv_\epsilon$ is identical to the one generated by $\mathbf{ACol}_2$ in \pocol. More precisely, we prove the following.
	
	\begin{manualtheorem}{\ref{prop:crystal in plactic}}
		
		Let $w_1,w_2\in\adm^*$. We have $w_1\equiv_\epsilon w_2$ if and only if $[w_1]=[w_2]\in Pl^\mathbb{N}(C_n)$. In other words $(\adm^*/\equiv_\epsilon)=Pl^\mathbb{N}(C_n)$.
	\end{manualtheorem}
	
	\noindent We then describe a notion of a \textit{reduction strategy} for a given word $w\in \adm^*$. This is simply a sequence $s$ which describes the successive positions in $|w|$ where one can apply rewriting rules. We show that reduction strategies are preserved by the action of the Kashiwara operators. This fact can be summarized in the following diagram
	$$
	\xymatrix{& f_i.w \ar@{=>} [rd]^{[-]} \\
		w \ar@{->}[ur]^{f_i} \ar@{=>} [rd]_{[-]}& & w' \\ 
		& [w] \ar@{->}[ur]_{f_i}}
	$$
	
	\noindent We apply this to the leftmost and rightmost reduction strategies for words of the form $w=c_1c_2c_3$, i.e. the two strategies which constitute a critical branching, and characterize the corresponding confluence diagram by a pair $\text{conf}(w)=(a,b)$, where $a$ and $b$ signify the lengths of the two reduction strategies, and call $(a,b)$ the shape of the diagram. We obtain the following result.
	
	\begin{manualtheorem}{\ref{thm:confdiags}}	
		Let $t,u,v\in\adm$ and $w=tuv$. Then $\text{conf}(w)=\text{conf}(w^0)$.
	\end{manualtheorem}
	
	\noindent This result shows that it suffices to describe confluence diagrams of words $w=c_1c_2c_3\in\adm^*$ of highest weight.
	
	\smallskip
	\noindent In Section \ref{sec:Ctrees} we introduce a certain tree $\Gamma_C$ along with labels on its vertices, and call it the $C-$\textit{tree}. The purpose of it is to parametrize the words of highest weight in $\adm^*$. This is the tree 
	\begin{center}
		\begin{tikzpicture}
		\draw[-] (-0.25,0)--(-1,-1);
		\draw[-] (-1,-1)--(-0.25,-1);
		\draw[-] (-0.25,-1)--(-1,-2);
		\draw[-] (-1,-2)--(-0.25,-2);
		\draw[-] (-0.25,-2)--(-1,-3);
		\draw[-] (-1,-3)--(-0.25,-3);
		\draw[-] (-0.25,-3)--(-1,-4);
		\draw[-] (-1,-4)--(-0.25,-4);
		
		\draw[-] (-0.25,0)--(1,-1);
		
		\draw[-] (1,-1)--(0.25,-2);
		\draw[-] (0.25,-2)--(1,-2);
		\draw[-] (1,-2)--(0.25,-3);
		\draw[-] (0.25,-3)--(1,-3);
		\draw[-] (1,-3)--(0.25,-4);
		\draw[-] (0.25,-4)--(1,-4);
		
		\draw[-] (1,-1)--(2.25,-2);
		\draw[-] (2.25,-2)--(1.5,-3);
		\draw[-] (1.5,-3)--(2.25,-3);
		\draw[-] (2.25,-3)--(1.5,-4);
		\draw[-] (1.5,-4)--(2.25,-4);
		
		\draw[-] (2.25,-2)--(3.5,-3);
		
		\draw[-] (3.5,-3)--(2.75,-4);
		\draw[-] (2.75,-4)--(3.5,-4);
		
		\draw[-] (3.5,-3)--(4.75,-4);
		
		\node at (4.75,-4) {$\cdot$};
		\node at (-0.25,0) {$\cdot$};
		
		\draw[dashed] (-0.25,-4)--(-1,-5);
		\draw[dashed] (1,-4)--(0.25,-5);
		\draw[dashed] (2.25,-4)--(1.5,-5);
		\draw[dashed] (3.5,-4)--(2.75,-5);
		\draw[dashed] (4.75,-4)--(6,-5);
		
		\node at (-0.25,0) {$\bullet$}; \node[above] at (-0.25,0) {\small{$1$}};
		\node at (-1,-1) {$\bullet$}; \node[left] at (-1,-1) {\small{$11$}};
		\node at (-0.25,-1) {$\bullet$}; \node[above] at (-0.25,-1) {\small{$11^-$}};
		\node at (-1,-2) {$\bullet$}; \node[left] at (-1,-2) {\small{$12$}};
		\node at (-0.25,-2) {$\bullet$}; \node[above] at (-0.25,-2) {\small{$12^-$}};
		\node at (-1,-3) {$\bullet$}; \node[left] at (-1,-3) {\small{$13$}};
		\node at (-0.25,-3) {$\bullet$}; \node[above] at (-0.25,-3) {\small{$13^-$}};
		\node at (-1,-4) {$\bullet$};\node[left] at (-1,-4) {\small{$14$}};
		\node at (-0.25,-4) {$\bullet$}; \node[above] at (-0.25,-4) {\small{$14^-$}};

		\node at (1,-1) {$\bullet$}; \node[right] at (1,-1) {\small{$2$}};
		\node at (0.25,-2) {$\bullet$}; \node[below] at (0.25,-2) {\small{$21$}};
		\node at (1,-2) {$\bullet$}; \node[above] at (1,-2) {\small{$21^-$}};
		\node at (0.25,-3) {$\bullet$}; \node[below] at (0.25,-3) {\small{$22$}};
		\node at (0.25,-4) {$\bullet$}; \node[below] at (0.25,-4) {\small{$23$}};
		\node at (1,-4) {$\bullet$}; \node[above] at (1,-4) {\small{$23^-$}};
		\node at (1,-3) {$\bullet$}; \node[above] at (1,-3) {\small{$22^-$}};

		\node at (2.25,-2) {$\bullet$}; \node[right] at (2.25,-2) {\small{$3$}};
		\node at (1.5,-3) {$\bullet$}; \node[below] at (1.5,-3) {\small{$31$}};
		\node at (2.25,-3) {$\bullet$}; \node[above] at (2.25,-3) {\small{$31^-$}};
		\node at (1.5,-4) {$\bullet$}; \node[below] at (1.5,-4) {\small{$32$}};
		\node at (2.25,-4) {$\bullet$}; \node[above] at (2.25,-4) {\small{$32^-$}};
		
		\node at (3.5,-3) {$\bullet$}; \node[right] at (3.5,-3) {\small{$4$}};
		\node at (2.75,-4) {$\bullet$}; \node[below] at (2.75,-4) {\small{$41$}};
		\node at (3.5,-4) {$\bullet$}; \node[above] at (3.5,-4) {\small{$41^-$}};
		\node at (4.75,-4) {$\bullet$}; \node[right] at (4.75,-4) {\small{$5$}};
		
		
		\end{tikzpicture}
	\end{center}
	We consider labeled $C-$trees of rank $n$, which are pairs $(\Gamma_C,s)$ with $s:V(\Gamma_C)\longrightarrow \mathbb{N}$ satisfying certain \textit{finiteness}, \textit{column}, and \textit{admissibility conditions}. We denote the set consisting of such objects by $\mathcal{GT}(n)$. For this purpose, we define a \textit{reading map}
	$
	\omega:\mathcal{GT}(n)\longrightarrow \adm^*,
	$  
	which associates an admissible column to each of the 'horizontal' parts of the graph, namely to each subset $\Gamma_C,k=\{(i,j^\pm)\ |\ i+j=k\}$.
	\smallskip
	\noindent In Section \ref{sec:calculations with c trees} we compute the normal form of $\omega(T)$ for $T\in\mathcal{GT}(n)$ in the following result.
	\begin{manualtheorem}{\ref{thm:normalofC}}
		
		Let $T\in\mathcal{GT}_k(n)$, and set $q_i=q_i(T)$. Then
	$$
	[\omega(T)]=\prod_{i=0}^{k-1} \cons(q_{k-i}).
	$$
	\end{manualtheorem}
	
	\noindent Here the $q_i(T)$ are numbers associated to each of the vertical strands of the $C-$tree, and $\mathfrak{c}(a)=12\cdots a$ for $a=q_{k-i}$ are blokc columns. Moreover we show that $\omega(T)$ is a word of highest weight in $\adm^*$ for all $T\in\mathcal{GT}(n)$.
	We denote by $HW^n$ the subset of $Pl^\mathbb{N}(C_n)$ consisting of the highest weight words in $\adm^*$. We then construct a map
	$
	\mathcal{T}:HW^n\longrightarrow \mathcal{GT}(n)
	$
	and prove the following result.
	\begin{manualtheorem}{\ref{thm:kindamain}}
		The map $\mathcal{T}:HW^n\longrightarrow \mathcal{GT}(n)$ is such that
		$$
		\omega(\mathcal{T}(u))=u
		$$
		for all $u\in HW^n$. Moreover, $\mathcal{T}=\omega^{-1}$.
	\end{manualtheorem}
	\noindent Via this result, we are able to adapt the problem of computing the generating $3-$cells to the language of $C-$trees.
	
	\smallskip
	\noindent In Section \ref{sec:coherent presentations}, we use block columns and $C-$trees to perform the reductions in \pocol. In particular, we compute the lengths of the leftmost and rightmost reduction strategies for words of $C-$trees of rank $3$, and obtain the following result on the coherent presentation \pocol\ of $Pl^\mathbb{N}(C_n)$.
	\begin{manualtheorem}{\ref{thm:lastbieq}}
		The generating $3-$cells of the coherent presentation \pocol\ of $Pl^\mathbb{N}(C_n)$ are of the form
		\begin{equation}\label{eq:confidag end}
		{{\xymatrix{ & t'u'v \ar@{=>}[r] & t'u''v' \ar@{=>}[r] & t''u'''v' \ar@{=>}[rd]\\ tuv \ar@{=>}[ru]^{\alpha_{tu}v} \ar@{=>}[rd]_{t\alpha_{uv}}&  &  &  & t_0u_0v_0 \\ & t u_1 v_1 \ar@{=>}[rr] & &t_1u_2v_1 \ar@{=>}[ur]}}} 
		\end{equation}
		where we allow for some of the arrows to be the identity.
	\end{manualtheorem}

	\noindent In Section \ref{sec:aux results} we show that every reduction sequence in \pocol\ gives rise to a reduction sequence in \pohag, and using this we obtain the following result on the coherent column presentation of $Pl(C_n)$.
	
	\begin{manualcorollary}{\ref{cor:lastbieq}}
		The generating $3-$cells of the coherent presentation \pohag\ of $Pl(C_n)$ are of the form
		\begin{equation}\label{eq:confidag end}
		{{\xymatrix{ & t'u'v \ar@{=>}[r] & t'u''v' \ar@{=>}[r] & t''u'''v' \ar@{=>}[rd]\\ tuv \ar@{=>}[ru]^{\alpha_{tu}v} \ar@{=>}[rd]_{t\alpha_{uv}}&  &  &  & t_0u_0v_0 \\ & t u_1 v_1 \ar@{=>}[rr] & &t_1u_2v_1 \ar@{=>}[ur]}}} 
		\end{equation}
		where we allow for some of the arrows to be the identity.
	\end{manualcorollary}	

	\noindent We remark that the column insertion in type $C$ was coded in Python by the author, and used to compute the confluence diagrams for the column presentation of $Pl(C_n)$ for small $n$.
	
	\section{Preliminary notions}\label{sec:prelims}
	\subsection{Crystal bases and plactic monoids} \label{sub:crystalbases and monoids}
	Here we recall the notions of crystal bases, and the crystal monoid associated to a given crystal basis. This notion initially appeared in the seminal work of Kashiwara \cite{kashiwara1990crystalizing}. Kashiwara's construction reflects deep properties of the representation theory of semisimple Lie algebras. Here however, we adopt the combinatorial approach as in \cite{cain2019crystal}, which fits well with the point of view that is often taken throughout this article.
	
	\subsubsection{Words, directed labeled graphs, and crystal graphs}
	
	Let $X$ be an alphabet, and denote by $X^*$ the monoid of words over this alphabet. In particular, the empty word denoted by $\varepsilon$, is also an element in $X^*$ and is the unit.
	
	\smallskip
	\noindent A graph is denoted by $\Gamma=(V,E)$ where $V$ is the set of vertices, $E\subset V \times V $ is the set of edges. An $I-$(edge-)labeled directed graph (or a directed graph with edges labeled from $I$) is the data $(V,E,I,l)$ where $(V,E)$ is a directed graph, $I$ is an indexing set, and $l:E\longrightarrow I$ is a map. We say that the edge $e$ has label $l(e)$. A \textit{path} of length $n$ in a directed graph $(V,E)$ is a map $p:\{0,1,\dots,n\}\longrightarrow V$ such that
	$
	e_j=(p(j),p(j+1))\in E.
	$
	A morphism $\phi:\Gamma_1\longrightarrow \Gamma_2$ is a map from the vertices of $\Gamma_1$ to those of $\Gamma_2$ such that edges with label $i$ in $\Gamma_1$ are mapped to edges with label $i$ in $\Gamma_2$.
	If moreover $\phi^{-1}$ is also a morphism of labeled directed graphs, we call $\phi$ an isomorphism.
	
	\begin{defin}\label{def:crystal base}
		A crystal basis is a directed labeled graph with vertex set $X$ and label set $I$ such that
		\begin{itemize}
			\item[$i)$] For any $x\in X$ and $i\in I$, we have
			$$
			\begin{array}{c}\#\{e=(x,v)\in E\ |\ v\in V\  l(e)=i\}\leq 1\\ \#\{e=(v,x)\in E\ |\ v\in V\  l(e)=i\}\leq 1
			\end{array}
			$$
			\item[$ii)$] For any $i\in I$, there exists no infinite path with edges labeled with $i$. 
		\end{itemize}
	\end{defin}
	
	\noindent For $i\in I$ there exist two partial maps
	$
	e_i, f_i:X\longrightarrow X
	$
	by setting $e_i.y=x$ and $f_i.x=y$ if and only if
	$
	x\stackrel{i}{\longrightarrow} y
	$
	is an edge in $X$. These two maps are called the \textit{Kashiwara operators}.
	
	\smallskip
	\noindent The graph structure on $X$ is extended to all of $X^*$ by extending the Kashiwara operators to $X^*$ inductively on the length of words, denoted by $|w|$ for $w\in X^*$. More precisely we have the following.
	\begin{defin}\label{def:crystal graph}
		A crystal graph arising from a crystal basis $X$ with labels in $I$, is the directed $I-$labeled graph denoted by $\Gamma_X$ that has
		\begin{itemize}
			\item $X^*$ as vertex set,
			\item an edge $w \stackrel{i}{\longrightarrow} w'$ if and only if $e_i.w'=w$ and $f_i.w=w'$,
		\end{itemize}
		where $e_i$ and $f_i$ are defined on $X^*$ inductively on the length of words by setting for every $u,v\in X^*$
		$$
		e_i.(uv)=\left\{\begin{array}{cc}
		(e_i.u)v & \text{if }\varphi_i(u)\geq \varepsilon_i(v),\\
		u(e_i.v) & \text{if } \varphi_i(u) < \varepsilon_i(v),
		\end{array}\right.
		$$
		and
		$$
		f_i.(uv)=\left\{\begin{array}{cc}
		(f_i.u)v & \text{if }\varphi_i(u)> \varepsilon_i(v),\\
		u(f_i.v) & \text{if } \varphi_i(u) \leq \varepsilon_i(v),
		\end{array}\right.
		$$
		where $\varphi_i(u)=\#\{f_i.u,f_i^2.u,\dots\}$, and $\varepsilon_i(u)=\#\{e_i.u,e_i^2.u,\dots\}$ are also inductively defined.
		
	\end{defin}
	
	\noindent Here we describe a practical way for computing the action of the Kashiwara operators on words of $X^*$, first described by Kashiwara and Nakashima in \cite{kashiwara1991crystal} for classical Lie algebras. Consider the alphabet $M=\{+,-\}$, and let $M^*$ be the free monoid on $M$. Set
	$
	\mathcal{M} := M^*/\small{\langle (+-)=\varepsilon \rangle},
	$
	where $\varepsilon$ is the empty word in $M^*$. As a set we have
	$
	\mathcal{M}=\{-^p+^q\ |\ p,q\in\mathbb{N}\},
	$
	thus via the projection $M^* \longrightarrow \mathcal{M}$, to each element of $M^*$ corresponds an element of the form $-^p+^q$. For $i\in I$, define a map
	$
	\rho_i:X^*\longrightarrow \mathcal{M}
	$
	by setting for $x\in X$
	$$
	\rho_i(x)=\begin{cases}
	- & \text{if } \varphi_i(x)=1,\\
	+ & \text{if } \varepsilon_i(x)=1,\\
	\varepsilon & \text{otherwise}.
	\end{cases}
	$$
	and then extending $\rho_i$ to all of $X^*$ by
	\begin{equation}\label{eq:rho}
	\rho_i(x_1x_2\cdots x_k)=\rho_i(x_1)\cdots \rho_i(x_k).
	\end{equation}
	For $w=x_1x_2\cdots x_k$, the expression $\rho_i(w)=-^p+^q$ is obtained by successively canceling out the subwords $+-$ that appear on the right side of \eqref{eq:rho}. We then have the following recipe for applying the Kashiwara operators $e_i$ and $f_i$ on $w$ as follows
	
	\begin{itemize}
		\item $e_i.w$ exists if and only if $p>0$, and in that case we have
		$
		e_i.w=x_1\cdots x_{l-1}(e_i.x_l)x_{l+1}\cdots x_k,
		$
		where $l$ is such that $\rho_i(x_l)=-$ is not cancelled, and is the rightmost $-$ appearing in $-^p+^q$.
		\item $f_i.w$ exists if and only if $q>0$, and in that case we have
		$
		f_i.w=x_1\cdots x_{l-1}(f_i.x_l)x_{l+1}\cdots x_k,
		$
		where $l$ is such that $\rho_i(x_l)=+$ is not cancelled, and is the leftmost $+$ appearing in $-^p+^q$.
	\end{itemize}
	\subsection{Crystal monoid}
	Let $\Gamma_X$ be a crystal graph as in the previous section, and $w\in \Gamma_x$. Denote by $B(w)$ the connected component of $w$ in $\Gamma_X$, i.e. $B(w)$ is the full directed labeled subgraph of $\Gamma_X$ having as vertices all the words obtained by successively applying Kashiwara operators to $w$.
	
	\noindent From Definition \ref{def:crystal graph}, if $w'=f_i.w$, we have $|w|=|w'|$. Hence,
	$
	B(w)\subset \{w'\in X^*\ |\ |w'|=|w|\},
	$
	and if $X$ is finite, we see that $B(w)$ is a finite graph.
	
	\smallskip
	\noindent We define a relation $\sim_K$ on the vertices of $\Gamma_X$, namely $X^*$, by setting
	$w\sim_K w'$ if there exists a labeled graph isomorphism $\phi:B(w)\longrightarrow B(w')$ such that $\phi(w)=w'$. Let then $\equiv_K$ be the congruence on $X^*$ generated by $\sim_K$.
	
	\begin{defin}\label{def:plactic}
		The crystal monoid arising from the crystal graph $\Gamma_X$ is defined as
		$$
		Pl(X):= \left(X^*/\equiv_K\right).
		$$
		If $X$ is the crystal basis of a classical Lie algebra, we call $Pl(X)$ the \textit{crystal monoid} of type $X$.
	\end{defin}
	
	\subsection{Plactic monoid of type C}\label{sub:typeC}
	We now specify the previous discussion to the particular alphabet of type $C$.
	
	\smallskip
	\noindent The crystal basis of type $C$ and rank $n$ is
	$$
	C_n:\ 1\stackrel{1}{\longrightarrow} 2 \stackrel{2}{\longrightarrow} \dots n-1 \stackrel{n-1}{\longrightarrow} n \stackrel{n}{\longrightarrow} \overline{n} \stackrel{n-1}{\longrightarrow} \overline{n-1} \stackrel{n-2}{\longrightarrow} \dots \stackrel{2}{\longrightarrow}\overline{2} \stackrel{1}{\longrightarrow} \overline{1}.
	$$
	
	\noindent We denote the crystal monoid corresponding to this crystal basis by $Pl(C_n)$. We note that the direction of the edges in $C_n$ also endows $C_n$ with a total order $<$. As in Definition \ref{def:plactic}, the definition of $Pl(C_n)$ is graph theoretic. Lecouvey in \cite{lecouvey2002schensted} gives a presentation of $Pl(C_n)$ in terms of generators and relations as follows. The relations include those of Knuth in type $A$, and thus the monoid $Pl(C_n)$ is called the \textit{plactic monoid of type $C$}. To explicit this presentation, we first recall a few key concepts in $C_n^*$.

	\noindent $\bullet$ A word $u=x_1x_2\cdots x_k\in C_n^*$ with $x_i< x_{i+1}$ for $i=1,\dots,k-1$ is called a \emph{column}. Denote the set of all columns in $C_n^*$ by $\text{Col}(C_n)$.
	
	\smallskip
	\noindent For a column $u\in \text{Col}(C_n)$ and $z=1,\dots,n$, define
	$$
	\text{Set}_z(u):=\{x\in u\ |\ x\leq z \}\cup \{x\in u\ |\ x\geq \overline{z}\},
	$$
	and denote by $N_z(u)=\#\text{Set}_z(u)$, the cardinality of $\text{Set}_z(u)$.
	
	\smallskip
	\noindent $\bullet$  A column $u\in\text{Col}(C_n)$ is called \textit{admissible} if $u$ is non-empty and
	$
	N_z(u)\leq z$ for all $z=1,2,\dots,n$.
	Denote by $\bull$ the set of all admissible columns in $C_n^*$.
	
	\smallskip
	\noindent For a word $w\in C_n^*$, we say that $v\in C_n^*$ is a \textit{strict factor} of $w$ if there exist $w_0, w_1\in C_n^*$ such that $w=w_0vw_1$.
	
	\smallskip
	\noindent  $\bullet$ A column $u\in\text{Col}(C_n)$ is called an \textit{almost admissible} column if for every strict factor $v$ of $u$ we have $v\in\text{ACol}_\bullet(C_n)$, but $u\notin \text{ACol}_\bullet(C_n)$. One can then show that $N_z(u)\leq z+1$ for all $z=1,2,\dots,n$, see \cite{lecouvey2002schensted}.
	
	\begin{thm}(\cite{lecouvey2002schensted})
		Let $\equiv_{pl}$ be the congruence on $C_n^*$ generated by the following three families of relations
		\begin{itemize}
			\item[$R_1:$] $yzx \equiv yxz$ for $x\leq y < z$ with $z\neq \overline{x}$, and $xzy \equiv zxy$ for $x<y\leq z$ with $z\neq \overline{x}$;
			\item[$R_2:$] $y\overline{(x-1)}(x-1)\equiv yx\overline{x}$ and $x\overline{x}y\equiv\overline{x-1}(x-1)y$ for $1<x\leq n$ and $x\leq y\leq \overline{x}$;
			\item[$R_3:$] let $w$ be an almost admissible column, and let $z$ be the lowest unbarred letter such that $z,\overline{z}\in w$, and $N_z(w)=z+1$. Then $w\equiv \widetilde{w}$, where $\widetilde{w}$ is obtained from $w$ by deleting the pair $(z,\overline{z})$.
		\end{itemize}
		Then $Pl(C_n)=\left(C_n^*/\equiv_{pl}\right)$.
	\end{thm}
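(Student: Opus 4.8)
\noindent The plan is to prove $\equiv_{pl}\,=\,\equiv_K$ as congruences on $C_n^*$, where $\equiv_K$ is the congruence generated by $\sim_K$ from the definition of the crystal monoid (Definition \ref{def:plactic}); this immediately gives $Pl(C_n)=(C_n^*/\equiv_K)=(C_n^*/\equiv_{pl})$. I would route both inclusions through the Schensted-type column insertion $w\mapsto P(w)$ of \cite{lecouvey2002schensted}, which assigns to each $w\in C_n^*$ a symplectic tableau, and establish three ingredients. (i) \emph{Insertion is a crystal morphism}: $P$ commutes with every $e_i$ and $f_i$, so $w\mapsto P(w)$ restricts to an isomorphism of labeled graphs from the connected component $B(w)$ onto the crystal $B_\lambda$ of symplectic tableaux of shape $\lambda=\operatorname{shape}P(w)$, and $P(\operatorname{read}T)=T$ for every tableau $T$, where $\operatorname{read}T$ denotes the column reading word. (ii) \emph{Rigidity of tableau crystals}: each $B_\lambda$ has a unique source, hence trivial automorphism group, and $B_\lambda\cong B_\mu$ as labeled graphs forces $\lambda=\mu$. (iii) \emph{The plactic relations compute insertion}: each defining relation of $R_1\cup R_2\cup R_3$ leaves $P(w)$ unchanged, and every $w$ is $\equiv_{pl}$-equivalent to $\operatorname{read}P(w)$.

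\smallskip
\noindent Granting these, both inclusions follow formally. From the first half of (iii), $w\equiv_{pl}w'$ implies $P(w)=P(w')$; and since $P(\operatorname{read}P(w))=P(w)$, if $P(w)=P(w')$ then the second half of (iii) gives $w\equiv_{pl}\operatorname{read}P(w)=\operatorname{read}P(w')\equiv_{pl}w'$. Hence $w\equiv_{pl}w'\iff P(w)=P(w')$. On the other hand, by (i) the component $B(w)$ is isomorphic to $B_{\operatorname{shape}P(w)}$ by an isomorphism carrying $w$ to $P(w)$; so $w\equiv_K w'$ holds iff there is a labeled-graph isomorphism between $B_{\operatorname{shape}P(w)}$ and $B_{\operatorname{shape}P(w')}$ carrying $P(w)$ to $P(w')$, which by (ii) happens iff $\operatorname{shape}P(w)=\operatorname{shape}P(w')$ and, using triviality of the automorphism group, $P(w)=P(w')$. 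Combining, $w\equiv_K w'\iff P(w)=P(w')\iff w\equiv_{pl}w'$, as desired.

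\smallskip
\noindent The hard part will be ingredient (iii), which carries the genuine combinatorial content. Invariance of $P$ under $R_1$ is essentially the type-$A$ Knuth computation, redone for the linear order on $C_n$ with the case $z=\overline x$ excluded. Invariance under $R_2$ and $R_3$, together with the reduction of every word to its tableau reading word, is where type $C$ departs from type $A$: one must track how pairs $z,\overline z$ are created and destroyed during bumping, because inserting an admissible column into an admissible column may produce a non-admissible column that has to be repaired via the almost-admissible-column rule of $R_3$. Concretely, for the second half of (iii) I would induct on $|w|$: writing $w=w_0x$ and inserting $x$ into $P(w_0)$, show that each elementary bump and each column contraction is realized by a sequence of relations from $R_1\cup R_2\cup R_3$, the contraction steps governed by $R_2$ and $R_3$ being the delicate ones and requiring a careful case analysis. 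By contrast, (ii) is standard crystal theory for the classical Lie algebras, and (i) is routine once $B_\lambda$ is identified with the Kashiwara--Nakashima tableau model and one checks, via the $\rho_i$-recipe recalled above, that a single bump does not change the $\rho_i$-reduced word.
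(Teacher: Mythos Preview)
The paper does not give its own proof of this theorem: it is quoted verbatim as a result of Lecouvey \cite{lecouvey2002schensted} and used as a black box, so there is no in-paper argument to compare against. Your outline is, in fact, a faithful high-level summary of Lecouvey's original strategy---route both congruences through the insertion tableau $P(w)$, use that tableau crystals are rigid, and show that the $R_i$ exactly generate the fibers of $P$---so at the level of a proof plan you are on the right track and aligned with the source the paper cites.

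Two remarks on the sketch itself. First, your ingredient (iii) is less laborious in one direction than you suggest: in Lecouvey's setup (and as recalled in this paper just after the theorem), the column insertion $c\leftarrow x$ is \emph{defined} by applying $R_1,R_2$ from right to left or an $R_3$ contraction, according to the highest weight of $cx$; so $w\equiv_{pl}\operatorname{read}P(w)$ is essentially built in, and the real work in (iii) is the converse, that the $R_i$ do not identify words with distinct $P$-tableaux. Second, you undersell ingredient (i): showing that $P$ intertwines the Kashiwara operators is one of the substantive theorems of \cite{lecouvey2002schensted}, not a routine $\rho_i$-check, precisely because a type~$C$ bump can trigger an $R_3$ contraction that changes the length of the word; the compatibility of such contractions with $e_i,f_i$ is where the case analysis lives. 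None of this is a gap in your plan, but if you were to write the proof out you would find the weight of the argument sitting in (i) rather than in the ``Knuth-style'' half of (iii).
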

	
	\noindent The elements of $Pl(C_n)$ are parametrized by symplectic tableaux. These are Young diagrams, i.e. collections of left-justified boxes in rows, with the length of rows being weakly decreasing from top to bottom, and with entries from $C_n$ in its boxes satisfying certain conditions. More precisely, if $T$ is a Young diagram with entries from $C_n$ in its boxes, and $C_1,C_2,\dots,C_k$ are its columns numbered from left to right, we have $C_{i}\preceq C_{i+1}$. Here $\preceq$ is a partial order in $\bull$ defined as follows.
	
	\smallskip
	\noindent $\bullet$ Let $c=x_1\cdots x_k$, $d=y_1\cdots y_l\in \bull$. We write $c\preceq d$ if
	\begin{itemize}
		\item[-] $k\geq l$
		\item[-] $x_i\leq y_i$ for $i=1,\dots,l$
		\item[-] there exists no pair of numbers $(a,b)$ such that $1\leq a\leq b\leq n$ and that the following conditions are satisfied
		\begin{itemize}
			\item[1.] $x_p=a,y_q=b,y_r=\overline{b},y_s=\overline{a}$ or $x_p=a,x_q=b,x_r=\overline{b}, y_s=\overline{a}$ for some $1\leq p\leq q < r\leq s \leq l$,
			\item[2.] $(s-r)+(q-p)\geq b-a$.
		\end{itemize}
	\end{itemize}
	The third condition is due to \cite{kashiwara1991crystal}, and any such pair $(a,b)$ is known as an $(a,b)-$configuration. It may look cumbersome in this form, but it will shortly be expressed differently in terms of the insertion algorithm.
	
	\begin{exo}
		If
		$$
		T_1=\begin{ytableau}
		1 & 1 & 2\\
		2 & \overline{3}
		\end{ytableau}\quad \text{and}\quad 
		T_2=\begin{ytableau}
		2 & 2\\
		3  & 3\\
		\overline{3} &\overline{2} 
		\end{ytableau}
		$$
		then $T_1$ is a symplectic tableau in $Pl(C_3)$, while $T_2$ contains a $(2,3)-$configuration, hence it is not a symplectic tableau in $Pl(C_3)$, despite satisfying the first two conditions.
	\end{exo}
	
	\noindent We denote by $[-]:C_n^*\longrightarrow Pl(C_n)$ the natural projection, and for $w\in C_n^*$, we call the symplectic tableaux $[w]$ its \textit{normal form}.
	\noindent There is a reading map $g:\{\text{symplectic tableau}\}\longrightarrow C_n^*$, where for $T$ a symplectic tableau with columns $c_1,\dots,c_k$ numbered from left to right, we set $g(T)=g(c_k)\cdots g(c_1)$, where $g(c)=x_1x_2\cdots x_k\in C_n^*$ for $c$ a column with entries $x_1,x_2,\cdots,x_k$. Often we shall use a symplectic tableau $T$ and its word $g(T)$ interchangeably.
	
	\begin{remark}
		We note that throughout this article we read a tableau by reading its columns from right to left. For instance if $T_1$ is as in the previous example, its reading is $g(T_1)=(2)(1\overline{3})(12)$, where the brackets signify the columns.
	\end{remark}
	\subsection{Insertion algorithm in type $C$} (\cite{lecouvey2002schensted})
	We recall the insertion algorithm for type $C$ as introduced in \cite{lecouvey2002schensted}.
	
	\smallskip
	\noindent Let $c\in\bull$ and $x\in C_n$, and consider the word $w=cx$. Then the highest weight $w^0$ of $w$ admits one of the forms
	$$
	w^0=\begin{cases}
	12\dots p (p+1),\\
	12\dots p 1,\\
	12\dots p \overline{p}.
	\end{cases}
	$$
	The insertion $c\leftarrow x$ is defined as follows
	\begin{itemize}
		\item[1.] if $w^0=12\cdots p(p+1)$, then $(c\leftarrow x)=w\in\bull$.
		\item[2.] if $w^0=12\cdots p 1$, then $(c\leftarrow x)=yc'$, where $yc'$ is the word obtained by successively applying plactic relations of the form $R_1$ and $R_2$ on $w$ from right to left.
		\item[3.] if $w^0=12\cdots p\overline{p}$, then $(c\leftarrow x)=\widetilde{cx}$ via a plactic relation $R_3$.
	\end{itemize}
	Using the insertion of a letter into a column, we can define the insertion of a letter into a symplectic tableau.  Let $T=c_1c_2\cdots c_k$ be a symplectic tableau in $Pl(C_n)$, where $c_i$ are admissible columns such that $c_{i+1}\preceq c_i$, and let $x\in C_n$. Consider the word $w=c_kx\in Pl(C_n)$. The insertion of $x$ into $T$, denoted by $T\leftarrow x$ is a new tableau $T'$ defined as follows
	\begin{itemize}
		\item[1.] if $w^0=12\dots p(p+1)$, and $(c_k\leftarrow x)=w$, then
		$$
		(T\leftarrow x)= c_1c_2\dots c_{k-1}w.
		$$
		\item[2.] if $w^0=12\dots p 1$, and $(c_1\leftarrow x)=yc_k'$, then
		$$
		(T\leftarrow x)= (c_1c_2\dots c_{k-1} \leftarrow y)c_k'
		$$
		\item[3.] if $w^0=12\dots p \overline{p}$, and $(c_k\leftarrow x)=c_k'=y_1\cdots y_l$, then
		$$
		(T\leftarrow x)=(((c_1\dots c_{k-1} \leftarrow y_1)\leftarrow y_2)\leftarrow \cdots \leftarrow y_l).
		$$
	\end{itemize}
	\noindent Using this insertion algorithm, we can define the insertion of a symplectic tableau $T_2$ into another symplectic tableau $T_1$ by setting
	\begin{equation}\label{eq:general insertion}
	(T_1\leftarrow T_2):=((((T_1\leftarrow x_1)\leftarrow x_2)\leftarrow \cdots) \leftarrow x_k),
	\end{equation}
	where $g(T_2)=x_1\cdots x_k$.
	
	\smallskip
	\noindent This insertion algorithm is such that given $T_1,T_2\in Pl(C_n)$, we have $(T_1\leftarrow T_2)=[g(T_1)g(T_2)]$.
	
	\subsubsection{Column insertion}\label{subsec:Col insertion} Of particular interest to us is the insertion of a column into another, namely $(c\leftarrow d)$ for $c,d\in\bull$. This is defined in equation \eqref{eq:general insertion}. An important result from \cite{hage2015finite}, and also appearing in \cite{cain2019crystal} is the following.
	
	\begin{manuallemmata}{}
		\label{lem:2collem}
		Let $t,u\in\text{ACol}_\bullet(C_n)$ be such that $u\npreceq t$, i.e. $g(tu)$ is not the reading of a symplectic tableau. Then $[tu]=(t\leftarrow u)$ consists of at most two columns. Moreover, if $[tu]=t'u'$, we have $|u'|<|u|$.
	\end{manuallemmata}
	
	\noindent Note that if $n=3$, and $c_1=1$, $c_2=2$, $c_3=\overline{1}$ we have
	$$
	(c_1\leftarrow c_1) = (1)(1);\quad (c_1\leftarrow c_2) = (12);\quad (c_1\leftarrow c_3) = \varnothing,
	$$
	where the brackets signify the columns. Thus for $t,u\in\bull$ we may have that $(t\leftarrow u)$ consists of $0$, $1$, or $2$ columns. For purposes of treating the insertion in functional terms, in Section \ref{sec:aux results} we will include an element $\epsilon$ in $\acol$ to signify an 'empty column'.
	
	\smallskip
	\noindent We note here that the condition $c_2\preceq c_1$ is equivalent to $(c_1\leftarrow c_2)=c_1c_2$.
	
	\subsection{Block columns}\label{subsec:conseccols} Here we describe certain column words in $C_n^*$ which will be useful in describing words of highest weight in $\bull$, and will later on be useful in Section \ref{sec:Ctrees}.
	\begin{defin}\label{def:consecols}
		A column $c=x_1x_2\cdots x_k\in C_n^*$ is called a block column (or simply a block), if for each $i=1,\dots,k-1$, there exists $j_i\in\{1,\dots,n\}$ such that $x_{i+1}=f_{j_i}.x_i$.
	\end{defin}
	\noindent In other words, $c$ is a block column if the letters in $c$ are consecutive letters in the alphabet $C_n$.
	
	\noindent We will particularly be interested in certain words that are products of blocks. Given $a,b,c\in\{0,1,2\dots,n\}$, if $c\leq a+b\leq n$, and $b,c\neq 0$ we set
	$$
	\mathfrak{c}(a;b,c):= (a+1)(a+2)\dots(a+b)\overline{a+b}\ \overline{a+b-1}\cdots\overline{a+b-c+1}.
	$$ 
	If $b\neq c=0$, we adopt the notation
	$$
	\mathfrak{c}(a;b,c)=\mathfrak{c}(a,b):= (a+1)\cdots(a+b),
	$$
	and if $c\neq b=0$ we adopt the notation
	$$
	\mathfrak{c}(a;0,c)=\mathfrak{c}(\overline{a},c):=\overline{a}\ \overline{a-1}\cdots\overline{a-c+1}.
	$$
	It is clear that $\mathfrak{c}(a,b)$ and $\mathfrak{c}(\overline{a},c)$ are blocks, and so is $\cons(a;b,c)$ if $a+b=n$.
	
	\subsubsection{Products of block columns}\label{subsec:prodsofcon}
	
	Here investigate words that are products of blocks. In particular, we describe a criterion for checking admissibility of such words. This result will be used in Section \ref{sec:Ctrees}.

	\begin{prop} \label{prop:admissible tree} Let $c_1,\dots,c_t$ and $d_1,\dots,d_r$ be blocks in $C_n^*$ such that $c_i$ consists of unbarred letters for all $i$, $d_j$ consists of barred letters for all $j$, and such that $w=c_1\cdots c_t d_1\cdots d_r$ is a column.
		Denote by $x_i$ the largest (i.e. rightmost) letter in $c_i$, and by $\overline{y_j}$ the smallest (i.e. leftmost) letter in $d_j$. Then $w$ is admissible if and only if
		\begin{equation}\label{eq:inprop1}
		N_z(w)\leq z\quad \text{for all } z=x_i,y_j.
		\end{equation}
		
	\end{prop}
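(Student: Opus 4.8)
The plan is to turn the admissibility condition into a one–variable monotonicity statement about the ``defect function''
$$
f(z)\;:=\;N_z(w)-z,\qquad z=0,1,\dots,n,
$$
where we extend the notation by setting $N_0(w):=0$, so that $f(0)=0$. The implication ``$\Rightarrow$'' is immediate, since $x_i,y_j\in\{1,\dots,n\}$. For ``$\Leftarrow$'' I would argue by contradiction: assuming $f\le 0$ on $S:=\{x_i\}\cup\{y_j\}$ but $f(z_0)>0$ for some $z_0$, I will produce a point of $S$ at which $f$ is already positive.

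The first step is to compute the discrete derivative of $f$. Since $w$ is a column, each letter of $C_n$ occurs in $w$ at most once, and the letters $\le z$ are distinct from the letters $\ge\overline z$; passing from $z$ to $z+1$ enlarges $\text{Set}_z(w)$ by at most the two letters $z{+}1$ and $\overline{z{+}1}$, so
$$
N_{z+1}(w)-N_z(w)=\mathbf 1\{z{+}1\in w\}+\mathbf 1\{\overline{z{+}1}\in w\},
$$
hence $f(z+1)-f(z)\in\{-1,0,1\}$, and $f(z+1)-f(z)=1$ \emph{exactly} when \emph{both} $z{+}1$ and $\overline{z{+}1}$ lie in $w$. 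Using the block decomposition $w=c_1\cdots c_t d_1\cdots d_r$, I would re-encode these occurrences as interval membership: the letter $z{+}1$ occurs in $w$ iff $z$ lies in the integer interval $I_i:=\{\min c_i-1,\dots,x_i-1\}$ for some $i$, and $\overline{z{+}1}$ occurs in $w$ iff $z$ lies in a corresponding integer interval $J_j$ whose largest element is $y_j-1$ (the subscripts occurring in $d_j$ form a run with top $y_j$, because $\overline{y_j}$ is the order-smallest, i.e.\ leftmost, letter of $d_j$). So $f$ makes an \emph{upward} step at $z$ precisely when $z$ lies both in some $I_i$ and in some $J_j$.

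The second step pins down where $f$ attains its maximum. Let $z^{*}$ be the smallest point of $\{0,\dots,n\}$ at which $f$ is maximal. If $w$ is not admissible then $f(z^{*})>0=f(0)$, so $z^{*}\ge 1$, and minimality of $z^{*}$ (with $f(0)=0$ handling the case $z^{*}=1$) forces $f(z^{*})-f(z^{*}-1)=1$; by Step~1, $z^{*}-1$ then lies in some $I_i$ and some $J_j$, which gives $\min c_i\le z^{*}\le x_i$ and $z^{*}\le y_j$. Now if $z^{*}=n$ then $z^{*}\le x_i\le n$ yields $z^{*}=x_i\in S$. If $z^{*}<n$, then $f(z^{*}+1)\le f(z^{*})$ shows $z^{*}$ is \emph{not} an upward point, so by Step~1 either $z^{*}\notin I_{i'}$ for all $i'$ or $z^{*}\notin J_{j'}$ for all $j'$; in the first case $z^{*}\notin I_i$ together with $z^{*}\ge\min c_i$ forces $z^{*}\ge x_i$, hence $z^{*}=x_i\in S$, and the second case gives $z^{*}=y_j\in S$ symmetrically. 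In every case $f(z^{*})>0$ at a point of $S$, the desired contradiction; therefore $w$ is admissible.

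The only place requiring care — what I would call the main obstacle — is Step~1: correctly translating ``the barred letter $\overline{z{+}1}$ occurs in $w$'' into membership of $z$ in an interval whose top is governed by $y_j$, which amounts to keeping straight the reversed order on barred letters (the leftmost letter of $d_j$ is its order-smallest one, $\overline{y_j}$, so the relevant subscript is $y_j$, not the ``visual top'' of the block). The degenerate cases $t=0$ or $r=0$ require no separate treatment: then no upward step of $f$ is possible, $f$ is non-increasing from $f(0)=0$, so $w$ is automatically admissible and the ``not admissible'' branch is vacuous.
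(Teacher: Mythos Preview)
Your proof is correct and takes a genuinely different route from the paper's. The paper argues directly: for each $k\in\{1,\dots,n\}$ it distinguishes the cases $k\in w$, $\overline k\in w$, or neither. In the first case, with $k\in c_i$ and $z:=x_i\ge k$, it writes $\text{Set}_k(w)=\text{Set}_z(w)\setminus(\{k{+}1,\dots,z\}\sqcup Q)$ for a suitable $Q$ of barred letters and bounds $N_k(w)\le N_z(w)-(z-k)\le k$; the barred case is symmetric; in the ``neither'' case it picks the nearest $x_l,y_m<k$ and bounds $N_k(w)\le N_{\max(x_l,y_m)}(w)\le\max(x_l,y_m)<k$. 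Your approach instead packages the whole problem into the defect function $f(z)=N_z(w)-z$, computes its one-step increments, and shows that the \emph{smallest} maximizer of $f$ must land in $S=\{x_i\}\cup\{y_j\}$, by exploiting that an upward step at $z^*-1$ places $z^*$ inside both a $c$-block and a $d$-block (via their subscript intervals), while the absence of an upward step at $z^*$ forces $z^*$ to be the top endpoint of one of those intervals. What your approach buys is a uniform, case-light argument in which the block structure is used only once (to say that upward steps occur on intervals with right endpoints $x_i-1$, $y_j-1$); what the paper's approach buys is a more explicit comparison of $\text{Set}_k(w)$ with $\text{Set}_z(w)$ that makes the role of each $x_i,y_j$ visible at the level of sets rather than increments.
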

	\begin{proof}
		If $w$ is admissible, then \eqref{eq:inprop1} is satisfied.
		
		\noindent Conversely, suppose that \eqref{eq:inprop1} is satisfied. We need to check whether $w$ is admissible. Let $k\in\{1,\dots,n\}$.
		
		\smallskip
		\noindent Assume that $k\in w$ or $\overline{k}\in w$. Here we treat the case $k\in w$, while the case $\overline{k}\in w$ is completely analogous. As $k$ is unbarred, there exists some $1\leq i \leq t$ such that $k\in c_i$. Let $z=x_i$. We then have
		$$
		\text{Set}_k(w)=\text{Set}_z(w)\setminus\left(\{k+1,\dots,z\}\cup\{\overline{k+1},\dots,\overline{z}\}\right)
		$$
		By assumption, we have $\{k+1,\dots,z\}\subset \text{Set}_z(w)$, and let $Q=\{\overline{k+1},\dots,\overline{z}\}\cap \text{Set}_z(w)$. We then have
		$$
		\text{Set}_k(w)=\text{Set}_z(w)\setminus\left(\{k+1,\dots,z\}\sqcup Q\right),
		$$
		and we get
		$$
		N_k(w)=N_z(w)-(z-k)-\#Q\leq z-(z-k)-\#Q=k-\#Q\leq k,
		$$
		showing that $w$ indeed satisfies the admissibility condition for $k$. Hence $w$ is admissible.
		
		\noindent Assume now that $k,\overline{k}\notin w$. Let
		$$
		l=\max\{i\leq t\ |\ x_i < k\};\ m=\min\{j\leq r\ |\ y_j<k\}.
		$$
		We then have
		$$
		\text{Set}_k(w)=\{x\in c_1\cdots c_l\}\sqcup \{x\in d_md_{m+1}\cdots d_r\}.
		$$
		If $l$ or $m$ does not exist, then the left set, respectively the right set, is empty.
		Since $k>x_l,y_m$, we obtain
		$$
		N_k(w)\leq N_{\max(x_l,y_m)}(w)\leq \max(x_l,y_m)<k,
		$$
		which shows that $w$ satisfies the admissibility condition for $k$. Hence $w$ is admissible. This completes the proof of the proposition.

	\end{proof}
	\noindent We make here note of a computational result, which can be proved by making direct computations of the insertion.
	\begin{lemm}\label{lemm: ref for n 2}
		Let $a\leq p$. Then
		\begin{itemize}
			\item[1.] $ (\mathfrak{c}(p)\leftarrow \mathfrak{c}(a))=\mathfrak{c}(a)\mathfrak{c}(p)$
			\item[2.] $(\mathfrak{c}(p)\leftarrow \mathfrak{c}(p;0,a))=\mathfrak{c}(p-a)$
		\end{itemize}
	\end{lemm}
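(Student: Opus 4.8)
The plan is to prove both identities by running Lecouvey's column-insertion algorithm of \eqref{eq:general insertion} explicitly, feeding the letters of the reading word of the right factor one at a time into the left factor and tracking the (at most two) columns that appear along the way. Throughout assume $1\le a\le p\le n$. Recall $\mathfrak{c}(p)=12\cdots p$, so that $g(\mathfrak{c}(a))=12\cdots a$ and $g(\mathfrak{c}(p;0,a))=\overline{p}\,\overline{p-1}\cdots\overline{p-a+1}$; hence $(\mathfrak{c}(p)\leftarrow\mathfrak{c}(a))$ is computed by successively inserting $1,2,\dots,a$, and $(\mathfrak{c}(p)\leftarrow\mathfrak{c}(p;0,a))$ by successively inserting $\overline{p},\overline{p-1},\dots,\overline{p-a+1}$.

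For part~(1), I would first treat the insertion of the letter $1$. The word $w=g(\mathfrak{c}(p))\cdot 1=12\cdots p\,1$ is not a column, so $w^0$ is of the form $12\cdots p'\,1$ and we are in the second case of the column-letter insertion; applying relation $R_1$ from right to left successively moves the trailing $1$ past $p,p-1,\dots,2$, producing the word $1\,12\cdots p$, that is, the two-column tableau with left column $\mathfrak{c}(p)$ and right column $\mathfrak{c}(1)$. This is a genuine symplectic tableau, since $\mathfrak{c}(p)\preceq\mathfrak{c}(1)$: the three defining conditions of $\preceq$ are immediate, the third being vacuous because there are no barred letters. I would then show by induction on $j$ that, for $2\le j\le a$, inserting $j$ into the tableau with columns $\mathfrak{c}(p),\mathfrak{c}(j-1)$ lands in the first case of the algorithm: the word $g(\mathfrak{c}(j-1))\cdot j=12\cdots j$ is an admissible column with $w^0=12\cdots (j-1)\,j$, so the letter is simply appended to the right column, giving the tableau with columns $\mathfrak{c}(p),\mathfrak{c}(j)$, again symplectic because $j\le p$ forces $\mathfrak{c}(p)\preceq\mathfrak{c}(j)$. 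After the $a$-th step this is the tableau with columns $\mathfrak{c}(p),\mathfrak{c}(a)$, whose reading word is exactly $\mathfrak{c}(a)\mathfrak{c}(p)$, as claimed.

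For part~(2), the key computation is that inserting $\overline{m}$ into the single column $\mathfrak{c}(m)=12\cdots m$ produces the word $12\cdots m\,\overline{m}$, which is a column with $N_m(12\cdots m\,\overline{m})=m+1$, hence an almost admissible column with $w^0=12\cdots m\,\overline{m}$; we are in the third case, and relation $R_3$ deletes the pair $(m,\overline{m})$, leaving the single column $\mathfrak{c}(m-1)$. Iterating this from $\mathfrak{c}(p)$ with $m=p,p-1,\dots,p-a+1$ in turn (all indices stay in range because $a\le p$), one reaches $\mathfrak{c}(p-a)$; when $a=p$ the final step yields the empty word, consistent with the convention $\mathfrak{c}(0)=\epsilon$.

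The only real care needed is identifying, at each step, which of the three cases of Lecouvey's insertion applies, i.e. computing the highest weight $w^0$ of the length-$(k{+}1)$ word formed with the rightmost column. In~(1) the only substantive step is the first one (the $R_1$-bubbling); every subsequent step is a trivial append. In~(2) the point is recognizing the almost-admissibility of $12\cdots m\,\overline{m}$ and that its unique bad pair is $(m,\overline{m})$. One could alternatively start (1) from the observation $\mathfrak{c}(a)\npreceq\mathfrak{c}(p)$, invoke Lemma~\ref{lem:2collem} to get that $[\mathfrak{c}(p)\mathfrak{c}(a)]$ has exactly two columns, and then pin them down by the weight and the $\preceq$-condition; but the direct step-by-step computation above is the cleanest route.
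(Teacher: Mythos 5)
Your proof is correct and takes exactly the route the paper intends: the paper gives no written proof for this lemma, stating only that it ``can be proved by making direct computations of the insertion,'' and your letter-by-letter run of Lecouvey's algorithm (the $R_1$-bubbling of the inserted $1$ followed by trivial appends in part~1, and the repeated $R_3$-deletion of the pair $(m,\overline{m})$ from the almost admissible column $12\cdots m\,\overline{m}$ in part~2) is precisely that computation, with the case analysis of $w^0$ correctly identified at each step.
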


	%
	
	\subsection{2-polygraphs} Polygraphs are a notion for presentations of higher dimensional categories by generators, relations, relations between relations and so on. Here we recall the notions relevant to this article, namely the notion for presentations of monoids. The polygraphs have one $0-$cell. For a detailed account, see \cite{guiraud2018polygraphs}.
	
	\smallskip
	\noindent A $2-$polygraph with one $0-$cell is a pair $\Sigma=(\Sigma_1,\Sigma_2)$, where $\Sigma_1$ is a set of generators, called $1-$cells, and $\Sigma_2$ is a set of rewriting rules, called generating $2-$cells, $\alpha:u\Rightarrow v$ with $u,v\in\Sigma_1^*$, the free monoid on $\Sigma_1$. We call $u$ and $v$ respectively the source and target of $\alpha$, and denote them by $s_1(\alpha)$ and $t_1(\alpha)$. A $2-$category, respectively a $(2,1)-$category, is a category enriched in categories, respectively in groupoids. We denote by $\Sigma_2^*$ and $\Sigma_2^\top$ the $2-$category, respectively the $(2,1)-$category, generated by the $2-$polygraph $\Sigma$.  A \textit{sphere} in $\Sigma_2^\top$ is a pair $(f,g)$ of $2-$cells in $\Sigma_2^\top$ such that $s_1(f)=s_1(g)$ and $t_1(f)=t_1(g)$.
	
	\smallskip
	\noindent The monoid presented by the $2-$polygraph $\Sigma$ is denoted by $\overline{\Sigma}$, and is defined as
	$$
	\overline{\Sigma}:=(\Sigma_1^*)/\equiv_{\Sigma_2},
	$$
	where $\equiv_{\Sigma_2}$ is the congruence on $\Sigma_1^*$ generated by the $2-$cells $u\Rightarrow v\in \Sigma_2$.
	
	\subsection{Rewriting properties of $2-$polygraphs}
	Call a $2-$polygraph $\Sigma$ \textit{finite} if  $\Sigma_i$ are finite for $i=1,2$. A \textit{rewriting step} in $\Sigma$ is a $2-$cell of the form $wuw'\stackrel{\alpha}{\Rightarrow} wvw'$ where $\alpha:u\Rightarrow v$ is a $2-$cell in $\Sigma_2$, and $w,w'\in\Sigma_1^*$. A \textit{rewriting sequence} in $\Sigma$ is a sequence of rewriting steps. Note that this can be a finite or an infinite sequence. If such a sequence exists from $u$ to $v$, we say that $u$ rewrites into $v$. Call a word $w\in\Sigma_1^*$ \textit{of normal form} if there exists no rewriting step with $w$ as its source. Given a word $w\in\Sigma_1^*$, call $v\in\Sigma_1^*$ a normal form of $w$ if $v$ itself is of normal form, and $w$ rewrites into $v$. We say that the polygraph $\Sigma$ is \textit{terminating} if there are no infinite rewriting sequences in $\Sigma$. We say that $\Sigma$ is confluent if for any words $u,u_1,u_2\in\Sigma_1^*$ such that $u$ rewrites into $u_1$ and $u_2$, there exists a word $v\in\Sigma_1^*$ such that $u_1$ and $u_2$ rewrite into $v$. We say that $\Sigma$ is \textit{convergent} if it is terminating and confluent.
	
	\smallskip
	\noindent A \textit{branching} of $\Sigma$ is a pair $(f,g)$ of $2-$cells in $\Sigma_2^*$ with a common source. We can define an order $\preceq$ on the family of branchings to be generated by the relations
	$$
	(f,g)\preceq (ufv,ugv)
	$$
	for $u,v\in\Sigma_1^*$. A branching that is minimal with respect to this order, is called a \textit{critical branching}.
	
	\smallskip
	\noindent Given a critical branching $(f,g)$ with source $w$ in a convergent $2-$polygraph, denote by $F$ and $G$ respectively the $2-$cells in $\Sigma_2^*$ which starting form $f$, respectively $g$, reduce $w$ to a normal form. We call the $3-$cell $F \Rrightarrow G$ the \textit{confluence diagram} of the critical branching $(f,g)$. More generally, $2-$cells $F,G$ in $\Sigma_2^*$ are called \textit{parallel} if they have the same source and target. Such a pair $(F,G)$ is then called a 2-\textit{sphere}.
	
	\subsection{Coherent presentations of monoids}
	
	\noindent A $(3,1)-$polygraph is a pair $(\Sigma,\Sigma_3)$ with $\Sigma$ a $2-$polygraph, and $\Sigma_3$ a set of generating $3-$cells $A:f\Rrightarrow g$, with $f$ and $g$ parallel $2-$cells in the $(2,1)-$category $\Sigma^\top$, such that $s_1(f)=s_1(g)$ and $t_1(f)=t_1(g)$. Denote by $\Sigma_3^\top$ the free $(3,1)-$category generated by $(\Sigma,\Sigma_3)$.
	
	\noindent A \textit{coherent presentation} of a monoid $\mathcal{M}$ is a $(3,1)-$polygraph $(\Sigma,\Sigma_3)$ such that $\Sigma$ is a presentation of $\mathcal{M}$, and for any $2-$sphere $\gamma$ of $ \Sigma^\top$ there exists a $3-$cell in $\Sigma_3$ with boundary $\gamma$. In other words, a coherent presentation of a monoid is one that consists of generators, relations, and relations between relations.
	
	\smallskip
	\noindent Squier's theorem from \cite{squier1994finiteness} asserts that a convergent presentation of a monoid extended by the generating $3-$cells defined by the confluence diagrams of critical branchings, is a coherent convergent presentation. 

	\subsection{Column presentation of $Pl(C_n)$}\label{subsec: Nohra}
	In \cite{hage2015finite}, Hage constructs a finite convergent presentation of the plactic monoid $Pl(C_n)$, called the \textit{column presentation}. This is the presentation given by the $2-$polygraph $\textbf{ACol}_\bullet=(\textbf{ACol}_{\bullet 1},\textbf{ACol}_{\bullet 2})$ where $\textbf{ACol}_{\bullet 1}=\adm$, and
	$$
	\textbf{ACol}_{\bullet 2} =\{c_1c_2\stackrel{\alpha_{c_1c_2}}{\Longrightarrow} (c_1\leftarrow c_2) |\ c_1,c_2\in\bull,\ c_2\npreceq c_1\}.
	$$
	The critical branchings of this presentation are of the form
	\begin{equation}\label{eq:critical br}
		{\xymatrix{ & (c_1\leftarrow c_2)c_3 \\ c_1c_2c_3 \ar@{=>}[ru]^{\alpha_{c_1c_2}c_3} \ar@{=>}[rd]_{c_1\alpha_{c_2c_3}}\\ & c_1(c_2\leftarrow c_3) }}
	\end{equation}
	for $c_3\npreceq c_2\npreceq c_1$ in $\acol$. The reduction sequence of $c_1c_2c_3$ starting with the upward (downward) arrow is called the \textit{leftmost (rightmost) reduction strategy}.
	\section{$\mathbb{N}-$decorated plactic monoid of type $C$ and its crystal structure}\label{sec:aux results}
	\noindent In this section we introduce the $\mathbb{N}-$decorated plactic monoid of type $C$, to be denoted by $Pl^\mathbb{N}(C_n)$. The purpose of this new object is to overcome the issue of the column insertion not being a map $\bull^2\longrightarrow\bull^2$. Indeed, in \ref{subsec:Col insertion} we have seed that for $c,d\in\bull$, the insertion $(c\leftarrow d)$ may be $0$, $1$, or $2$ columns. Here we adapt the $2-$polygraph \pohag\ so that the presentation is quadratic. To do this, we modify the column presentation \pohag\ of $Pl(C_n)$ to include the empty column $\epsilon$ as a generator, and we add $2-$cells containing $\epsilon$. This approach resembles that of Lebed in \cite{lebed2016plactic} for the plactic monoid of type $A$, via which she establishes braidings from the column and row insertion and uses it to compute the cohomology of $Pl(A_n)$.
	
	\subsection{\pocol\ and insertion as a map}\label{sub:insertion as a map}
	
	\noindent As previously discussed, we set $\text{ACol}(C_n):=\bull\sqcup \{\epsilon\}$, where $\epsilon$ plays the role of an 'empty column'. We have a natural map $
	p:\acol^*\longrightarrow\bull^*
	$
	given by $p(\epsilon)=\varnothing\in\bull^*$, i.e. $\epsilon$ maps to the identity, and $p(c)=c$ for $c\in\acol$ with $c\neq \epsilon$. This map then extends to
	
	\begin{equation}\label{eq:projection P}
	p(c_1\cdots c_k)=\prod_{\tiny{\begin{array}{c}
			1\leq i \leq k\\c_i\neq \epsilon
			\end{array}}}c_i.
	\end{equation}
	
	\noindent We extend the order $\preceq$ to $\acol$ by setting $c\preceq \epsilon$ for all $c\in\acol$, and for $c_1,c_2\neq \epsilon$, we set $c_2\preceq c_1$ if and only if $p(c_2)\preceq p(c_1)$. We now define the insertion as a map $\text{ins}(\ast\ast)=(\ast \leftarrow \ast):\acol^2\longrightarrow \acol^2$ by setting
	$$
	(c_1\leftarrow c_2) :=\begin{cases}
	d_1d_2 &\text{if } (p(c_1)\leftarrow p(c_2))=p(d_1)p(d_2)\in\bull^2\\
	\epsilon d &\text{if } (p(c_1)\leftarrow p(c_2))=d\in\bull,\\
	\epsilon\epsilon&\text{if } (p(c_1)\leftarrow p(c_2))=\varnothing\in\bull^*.
	\end{cases}
	$$
	
	\subsection{Definition of, and convergent presentation for $Pl^\mathbb{N}(C_n)$} Next we construct an $\mathbb{N}-$decorated plactic monoid of type $C$. Consider the $2-$polygraph \pocol\ $=(\mathbf{ACol}_1,\mathbf{ACol}_2)$ where $\mathbf{ACol}_1=\adm$ and
	$$
	\mathbf{ACol}_2=\left\{c_1c_2\Longrightarrow d_1d_2\ |\ 
	c_1,c_2\in\adm\ \text{such that } (c_1\leftarrow c_2)=d_1d_2\ \text{and } c_2\npreceq c_1 
	\right\}
	$$
	
	\noindent Denote
	$$
	Pl^\mathbb{N}(C_n):=\overline{\Sigma}=\left(\adm^*/\equiv_\mathbf{ACol}\right),
	$$
	where $\equiv_\mathbf{ACol}$ is the congruence on $\adm^*$ generated by $\mathbf{ACol}_2$,
	and call $Pl^\mathbb{N}(C_n)$ the $\mathbb{N}-$\textit{decorated plactic monoid of type }$C$ \textit{and rank }$n$. We will soon exhibit the elements of $Pl^\mathbb{N}(C_n)$, and its product. Before we do this, we first show that \pocol\  is a convergent polygraph. We do this by utilizing the map $p$ as defined in \ref{sub:insertion as a map} in order to compare \pocol\ to \pohag.
	
	\smallskip
	\noindent Denote by $|-|$ and $|-|_\bullet$ respectively the length functions in \pocol\ and \pohag. Here the length functions may take as argument either words, or rewriting sequences $s:w\Longrightarrow w'$ in \pocol\ (respectively \pohag). For $w\in\adm^*$, we have $|w| = |p(w)|_\bullet + \#_\epsilon(w)$, where $\#_\epsilon(w)$ counts the number of $\epsilon$ that appear in $w$. 
	
	\smallskip
	\noindent By definition of the rewriting system $\Longrightarrow$, we see that if $w\Longrightarrow w'$ is any $2-$cell in \pocol, then $|w|=|w'|$. To distinguish the generating $2-$cells of \pocol\ that contain $\epsilon$, we denote them by $\stackrel{\epsilon}{\Longrightarrow}$. These are the $2-$cells of the form $c\epsilon \stackrel{\epsilon}{\Longrightarrow}\epsilon c$ for $c\neq\epsilon$.  For a rewriting sequence $s:w\Longrightarrow w'$ in \pocol, denote by $\#_\epsilon(s)$ the number of $\stackrel{\epsilon}{\Longrightarrow}$ that appear in $s$. The following result gives an upper bound of $\#_\epsilon(s)$.
	\begin{prop}\label{prop:some comp}
		Let $w\in\adm^*$ with $|w|=t$, and let
		$$
		s:	w=w_0\Longrightarrow w_1\Longrightarrow \cdots \Longrightarrow w_k
		$$
		be a rewriting sequence in \pocol,
		where $(w_i\Longrightarrow w_{i+1})\in\mathbf{ACol}_2$.
		Then $\#_\epsilon(s)\leq \frac{t^2(3t+1)}{4}$.
	\end{prop}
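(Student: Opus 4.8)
The plan is to follow each individual occurrence of the letter $\epsilon$ through the rewriting sequence $s$, to show that such an occurrence can only ever drift to the left and only boundedly often, and then to count. First I would record the following structural facts about rewriting steps of \pocol, all of which come from unwinding the definition of the insertion-as-a-map in \ref{sub:insertion as a map} together with the extended order $\preceq$ on $\adm$: (i) no generating $2$-cell has source $\epsilon c$, because $c \preceq \epsilon$ for every $c \in \adm$; (ii) the only generating $2$-cell whose source involves $\epsilon$ is of the form $c\epsilon \stackrel{\epsilon}{\Longrightarrow} \epsilon c$ with $c \neq \epsilon$, and its target again has exactly one $\epsilon$; (iii) any other generating $2$-cell $c_1 c_2 \Longrightarrow d_1 d_2$ has $c_1, c_2 \neq \epsilon$, and target equal to $\epsilon\epsilon$, or to $\epsilon d$ with $d \neq \epsilon$, or containing no $\epsilon$ at all. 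Three consequences follow: no rewriting step ever deletes an occurrence of $\epsilon$, so $\#_\epsilon(-)$ is non-decreasing along $s$; a rewriting step alters only the two letters in its redex, and that redex contains an $\epsilon$ only when the step is a $\stackrel{\epsilon}{\Longrightarrow}$; and such a step moves the $\epsilon$ in its redex exactly one position to the left, leaving the positions of all other occurrences of $\epsilon$ untouched.

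Since $\epsilon$'s are never deleted and are relocated only one at a time, one position leftward, by $\stackrel{\epsilon}{\Longrightarrow}$-steps, one can consistently label the occurrences of $\epsilon$ that appear anywhere in $s$ by $1, 2, \dots, m$ — an occurrence being born either as a letter of $w_0$ or at the step of type (iii) creating it — and denote by $p_a(i) \in \{1,\dots,t\}$ the position of occurrence $a$ in $w_i$, once it is born. For each fixed $a$, the sequence $i \mapsto p_a(i)$ is non-increasing, drops by exactly $1$ at each $\stackrel{\epsilon}{\Longrightarrow}$-step that moves occurrence $a$, and is constant at all other steps; since it takes values in $\{1,\dots,t\}$ and starts at a birth position $\le t$, occurrence $a$ is moved at most $t-1$ times. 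Moreover $\#_\epsilon(w_i) \le |w_i| = t$ for all $i$, so $m = \#_\epsilon(w_k) \le t$. As every $\stackrel{\epsilon}{\Longrightarrow}$-step moves exactly one occurrence, $\#_\epsilon(s) = \sum_{a=1}^{m}\#\{\text{steps moving } a\} \le m(t-1) \le t(t-1)$. Finally $t(t-1) \le \tfrac{t^2(3t+1)}{4}$ for every $t \ge 1$, since $\tfrac{t^2(3t+1)}{4} - t(t-1) = \tfrac{t(3t^2 - 3t + 4)}{4} \ge 0$, which gives the stated inequality (in fact with room to spare).

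The only genuinely delicate point — and where I expect to spend most of the write-up — is making the structural analysis (i)--(iii) and the ensuing bookkeeping watertight: one must deduce (i)--(iii) directly from the definitions of $\preceq$ on $\adm$ and of the insertion map, and one must set up the labelling of the occurrences of $\epsilon$ formally, e.g. via a compatible family of injections from the set of $\epsilon$-positions of $w_i$ into that of $w_{i+1}$ (the identity outside the redex; position $j+1 \mapsto j$ for a $\stackrel{\epsilon}{\Longrightarrow}$-step acting at positions $j,j+1$; an inclusion when a step of type (iii) creates one or two new $\epsilon$'s). No analytic ingredient is needed; in particular the argument does not presuppose the convergence of \pocol, which matters since that result will itself be proved using the present bound.
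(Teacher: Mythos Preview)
Your argument is correct and in fact yields the sharper bound $\#_\epsilon(s)\le t(t-1)$, which you then relax to the stated $\tfrac{t^2(3t+1)}{4}$. The structural facts (i)--(iii) are exactly right and follow from the definitions as you indicate; in particular, a source $\epsilon c$ is never a redex because the extension of $\preceq$ gives $c\preceq\epsilon$, and the only redex containing an $\epsilon$ is $c\epsilon$ with $c\neq\epsilon$, whose rewrite $c\epsilon\stackrel{\epsilon}{\Longrightarrow}\epsilon c$ shifts that single $\epsilon$ one step to the left. From this your global tracking of each $\epsilon$-occurrence by its position, together with the observation that positions only decrease and that at most $t$ occurrences ever exist, is clean and watertight.

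The paper takes a genuinely different route. Rather than following individual occurrences through the whole of $s$, it partitions $s$ into maximal segments on which $\#_\epsilon(w_i)$ is constant (separated by the steps that create new $\epsilon$'s), bounds the number of $\stackrel{\epsilon}{\Longrightarrow}$-steps within each such segment by a quantity of order $t^2$, and then multiplies by the number of segments, which is at most $t$. This double counting is what produces the cubic bound. Your approach avoids the segment-wise reset of the bookkeeping, which is precisely why you obtain a quadratic rather than cubic estimate. For the purposes of the paper this makes no difference --- the proposition is used only to show that $\#_\epsilon(s)$ is bounded by a function of $|w|$, so that termination of \pocol\ can be deduced from termination of \pohag\ --- but your argument is both shorter and sharper.
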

	\begin{proof}
		For $i=0,1,\dots,k$, denote $t_i=\#_\epsilon(t_i)$. By definition, a generating $2-$cell $u \Longrightarrow u'$ increases the number of $\epsilon$ by at most $1$. Thus $t_i\leq t_{i+1}\leq t_i+1$. Let $0\leq j_1<\dots < j_l\leq k$ be all those indices for which we have $t_{j_{i}+1}=t_{j_i}+1$. We can then split our sequence $s$ into
		$$
		s =  \left(s_1 \Longrightarrow_1 s_2\Longrightarrow_2 \cdots \Longrightarrow_{l-1} s_l\right).
		$$
		where $s_i: w_{j_{i-1}+1}\Longrightarrow \cdots \Longrightarrow w_{j_i}$ for $i=1,\dots,l$, and where $j_0:=-1$. Moreover, as $\Longrightarrow_{i}$ increase the number of $\epsilon$, we see that $\Longrightarrow_i\neq \stackrel{\epsilon}{\Longrightarrow}$, thus we have
		\begin{equation}\label{eq:counting prop}
		\#_\epsilon(s)=\sum_{i=1}^l\#_\epsilon(s_i).
		\end{equation}
		
		\noindent We now compute an upper bound for $\#_\epsilon(s_i)$, for instance for $s_1$. For simplicity, denote $j_1=r$. 
		We have a rewriting sequence
		$
		s_0: w_0\Longrightarrow w_1\Longrightarrow\cdots \Longrightarrow w_{r}
		$
		such that $t_0=t_1=\cdots = t_r=:q$. If $w=c_1\cdots c_t$, denote by $i_1,\dots,i_q$ the indices where $\epsilon$ appears in $w$, so that $c_{i_1}=c_{i_2}=\cdots=c_{i_q}=\epsilon$. Note that for $l=1,\dots,q$ we have that $c_{i_l}$ is preceded by $i_l-1$ columns, of which $l-1$ are equal to $\epsilon$. Hence there are at most $i_l-l$ $2-$cells  $\stackrel{\epsilon}{\Longrightarrow}$ in $s_0$ involving $c_{i_l}=\epsilon$. Thus the number of $2-$cells in $s_0$ that involve $c_{i_l}$ is
		$
		\#_\epsilon(s_0)=\sum_{l=1}^q (i_l-l)$. Note that for each $l$ we have that $c_{i_j}$ for $j=l+1,\dots,q$ are columns to its right. Thus we have that $i_l\leq t-q+l$, which gives us
		$$
		\#_{\epsilon}(s_0)\leq \sum_{l=1}^q(t-q+l)=q(t-q)+\frac{q(q+1)}{2}.
		$$
		Note that we have $q(t-q)=\left(\frac{t}{2}\right)^2-\left(q-\frac{t}{2}\right)^2\leq \frac{t^2}{4}$. As $q\leq t$, we obtain
		$
		\#_\epsilon(s_0)\leq \frac{t^2}{4}+\frac{t^2+t}{2}=\frac{t(3t+1)}{4}.
		$
		We have proven this upper bound for $s_0$, but this proof holds for all $s_i$.
		
		\noindent On the other hand, from $t_0=t_{j_1}\geq 0$, and $t_{j_{i+1}} = t_{j_i}+1$, we obtain $t_{j_l} = t_0+l$. Since $t_{j_l}\leq |w_k|=|w|$, we obtain $l\leq t-t_0\leq t$. Thus we finally have
		$$
		\#_\epsilon(s)=\sum_{i=1}^l\#_\epsilon(s_i)\leq l\frac{t(3t+1)}{4}\leq \frac{t^2(3t+1)}{4}
		$$
		which is what we wanted to show.
	\end{proof}
	\noindent Now using Proposition \ref{prop:some comp}, and the map $p:\acol\longrightarrow \bull$ we are able to study the presentation \pocol\ of $Pl^\mathbb{N}(C_n)$ by comparing it with \pohag. The proof of the following result uses Hage's proof in \cite{hage2015finite} of the convergence of \pohag.
	
	\begin{thm}\label{thm:conv poly}
		The $2-$polygraph \pocol\ is finite and convergent.
	\end{thm}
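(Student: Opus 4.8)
The plan is to obtain finiteness, termination, and confluence of \pocol\ all at once by comparing it with Hage's column presentation \pohag\ through the forgetful morphism $p\colon\acol^*\to\bull^*$ of \eqref{eq:projection P} and the bound from Proposition~\ref{prop:some comp}. Finiteness is immediate: a column is a strictly increasing word over the $2n$-letter alphabet $C_n$, so $\bull$ is finite, hence $\mathbf{ACol}_1=\adm=\bull\sqcup\{\epsilon\}$ is finite, and $\mathbf{ACol}_2$ is indexed by a subset of $\adm^2$, so it is finite as well. Note also that every $2$-cell of \pocol\ that is not of the form $c\epsilon\stackrel{\epsilon}{\Longrightarrow}\epsilon c$ has both its source columns in $\bull$; for such a $2$-cell $c_1c_2\Longrightarrow d_1d_2$ one has, by definition of the insertion map, $p(d_1)p(d_2)=(p(c_1)\leftarrow p(c_2))=(c_1\leftarrow c_2)$ in $\bull^*$, and since $c_2\npreceq c_1$ this differs from $c_1c_2$, so applying $p$ in context produces a genuine (non-identity) rewriting step of \pohag; applying $p$ to a step $c\epsilon\stackrel{\epsilon}{\Longrightarrow}\epsilon c$ leaves the word unchanged.

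For termination I would argue by contradiction. Suppose $s\colon w_0\Longrightarrow w_1\Longrightarrow\cdots$ is an infinite rewriting sequence in \pocol; since every $2$-cell preserves the length $|-|$, all $w_i$ have a common length $t$. Every finite prefix of $s$ contains at most $\tfrac{t^2(3t+1)}{4}$ steps of type $\stackrel{\epsilon}{\Longrightarrow}$ by Proposition~\ref{prop:some comp}, and this count is monotone along prefixes, so $s$ has only finitely many $\stackrel{\epsilon}{\Longrightarrow}$-steps. Hence some tail $w_N\Longrightarrow w_{N+1}\Longrightarrow\cdots$ uses only $2$-cells with source columns in $\bull$. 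Applying $p$ termwise and discarding the (finitely many, trivial) collapses, we obtain by the previous paragraph an infinite rewriting sequence $p(w_N)\Longrightarrow p(w_{N+1})\Longrightarrow\cdots$ in \pohag, contradicting the convergence (in particular termination) of \pohag\ proved in \cite{hage2015finite}.

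For confluence, \pocol\ is now terminating, so it suffices to show that every $w\in\adm^*$ has a unique normal form. A word $v$ is a \pocol-normal form iff every factor $v_iv_{i+1}$ satisfies $v_{i+1}\preceq v_i$; since $c\preceq\epsilon$ for all $c$ while $\epsilon\npreceq c$ for $c\neq\epsilon$, this forces $v=\epsilon^k c_1\cdots c_m$ with $c_i\in\bull$ and $c_1\cdots c_m$ a \pohag-normal form (i.e. the reading of a symplectic tableau). Now let $v=\epsilon^k c_1\cdots c_m$ be a normal form of $w$: applying $p$ to a derivation $w\Longrightarrow^* v$ and discarding the $\stackrel{\epsilon}{\Longrightarrow}$-steps gives a \pohag-derivation $p(w)\Longrightarrow^* c_1\cdots c_m$ ending in a \pohag-normal form, so convergence of \pohag\ forces $c_1\cdots c_m=\mathrm{nf}(p(w))$, the unique \pohag-normal form of $p(w)$; and since $|v|=|w|$ we get $k=|w|-|\mathrm{nf}(p(w))|_\bullet$. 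Hence
\[
v=\epsilon^{\,|w|-|\mathrm{nf}(p(w))|_\bullet}\,\mathrm{nf}(p(w))
\]
is completely determined by $w$, giving uniqueness of normal forms and therefore confluence. Together with termination this shows \pocol\ is convergent.

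The main obstacle is the termination step: because every $2$-cell of \pocol\ preserves the word length, there is no naive terminating measure, and the role of Proposition~\ref{prop:some comp} is precisely to confine the length-neutral $\stackrel{\epsilon}{\Longrightarrow}$-moves so that an infinite derivation would have to be "eventually genuine" and thereby project, via $p$, to an infinite derivation in the already-known-convergent presentation \pohag. Everything else (finiteness, and uniqueness of normal forms once termination is in hand) is then bookkeeping with the map $p$ and the description of \pocol-normal forms.
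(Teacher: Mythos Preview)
Your proof is correct and follows essentially the same approach as the paper's: both reduce termination to that of \pohag\ via the projection $p$ together with the bound of Proposition~\ref{prop:some comp} on the number of $\stackrel{\epsilon}{\Longrightarrow}$-steps, and both deduce confluence by characterizing \pocol-normal forms as $\epsilon^k$ followed by a \pohag-normal form and invoking the uniqueness of the latter. The only cosmetic difference is that the paper concludes $w'=w''$ from $p(w')=p(w'')$ using that neither normal form can rewrite to the other, whereas you compute the unique normal form $\epsilon^{|w|-|\mathrm{nf}(p(w))|_\bullet}\,\mathrm{nf}(p(w))$ directly; these are the same argument.
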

	\begin{proof}
		The finiteness of \pocol\ is evident, as $\adm$ is clearly finite, and $\mathbf{ACol}_2$ injects into $\adm^2$.
		
		\noindent Note that if a generating $2-$cell $\Longrightarrow$ does not involve $\epsilon$, we have
		\begin{equation}\label{eq:first imp}
		w \Longrightarrow w' \in\textbf{ACol}\quad\text{if and only if}\quad p(w)\Longrightarrow p(w')\in\textbf{ACol}_\bullet.
		\end{equation}
		\noindent On the other hand, as a $2-$cell $\stackrel{\epsilon}{\Longrightarrow}$ does not change the letters $c_i\neq \epsilon$ of a word $w$, or their order in $w$, we have
		\begin{equation}\label{eq:second imp}
		p(w)=p(w') \in \textbf{ACol}_\bullet\quad \text{if and only if}\quad w=w'\ \text{or } w,w'\Longrightarrow^\epsilon w'' \in\mathbf{ACol}.
		\end{equation}
		for some $w''\in \adm$, and $\Longrightarrow^\epsilon$ a rewriting sequence in \pocol\ consisting of $2-$cells of the form $\stackrel{\epsilon}{\Longrightarrow}$. Indeed, $p(w)=p(w')$ means that if $w\neq w'$, then $w$ and $w'$ differ only by the position of $\epsilon$s in them, which can then be reordered via applications of $\stackrel{\epsilon}{\Longrightarrow}$. The converse is evident.
		
		\smallskip
		\noindent Thus any rewriting sequence
		\begin{equation}\label{eq:seq in big}
		s: w\Longrightarrow w_1\Longrightarrow \cdots \Longrightarrow w_k\in\mathbf{ACol}
		\end{equation}
		of length $k$ in \pocol, gives rise to a rewriting sequence
		\begin{equation} \label{eq:seq in small}
		p(s):p(w)\Longrightarrow \cdots \Longrightarrow p(w_k)
		\end{equation}
		in \pohag. By \eqref{eq:second imp} and Proposition \ref{prop:some comp} we see that
		\begin{equation}\label{eq:bound lengths}
		|s|-\frac{|w|^2(|w|+1)}{4}\leq |p(s)|_\bullet\leq |s|.
		\end{equation}
		
		\noindent Thus the existence of an infinite rewriting sequence in \pocol\ implies the existence of an infinite rewriting sequence in \pohag, which is impossible. Hence all rewriting sequences in \pocol\ are finite, so \pocol\ is a terminating polygraph.
		
		\smallskip
		\noindent We now prove confluence of \pocol. Let $w=c_1\cdots c_k\in\adm^*$, and let $w'$ be a normal form of $w$ in \pocol. If $w'=c_1'\cdots c_k'$, since $w'$ is normal, we have $c_{i+1}'\preceq c_i'$. Let $s=\max\{1\leq i\leq k\ |\ c_i'=\epsilon\}$. Note that as $c_s'\preceq c_{s-1}'\preceq \cdots \preceq c_1'$, we have that $c_i'=\epsilon$ for $i\leq s$. Thus we have $w'=\epsilon^sc_{s+1}'\cdots c_k'$. Moreover by \eqref{eq:first imp} we have that $p(w')=c_{s+1}'\cdots c_k'$ is normal in \pohag, so is a normal form of $p(w)$. Then if $w''$ is another normal form of $w$ in \pocol, similarly we have that $w''=\epsilon^rc''_{r+1}\cdots c_k''$ for some $0\leq r\leq k$, and $c_i''\in\adm$. We then have that $p(w'')=c_{r+1}''\cdots c_k''$ is a normal form of $p(w)$ in \pohag. As \pohag is confluent, we have that $p(w')=p(w'')$. Then by \eqref{eq:second imp}, since $w'\Longrightarrow w''$ is impossible due to $w'$ being normal, we have that $w'=w''$. This implies that \pocol\ is confluent. Indeed, if $w\Longrightarrow w_1$ and $w\Longrightarrow w_2$ in \pocol, since this polygraph is terminating, there exist normal forms $w'$ of $w_1$ and $w''$ of $w_2$. As we have rewriting sequences from $w$ to $w'$ and $w''$, we have that $w'$ and $w''$ are normal forms of $w$. But then we obtain $w'=w''$, hence indeed \pocol\ is confluent.
		
		\noindent Thus \pocol\ is a convergent presentation.
	\end{proof}

	\noindent We note that the proof of Theorem \ref{thm:conv of dec} describes explicitly the normal forms in \pocol. We note that the reading map $g:Pl(C_n)\longrightarrow C_n^*$ can be adapted to $g:Pl(C_n)\longrightarrow \bull^*\longrightarrow \adm^*$ by setting $g(T)=c_1c_2\cdots c_k$, where $c_i$ are the columns of $T$ numbered from right to left. The we can describe  $Pl^\mathbb{N}(C_n)$ as a monoid as follows.
	
	\begin{cor}\label{cor:expressing the new monoid}
		$Pl^\mathbb{N}(C_n)=\{(T,s)\ |\ T\in Pl(C_n),\ k\in\mathbb{N}\}$, where we identify $w=\epsilon^k g(T)$ with $(T,k)$. Moreover, the product in $Pl^\mathbb{N}(C_n)$ is given by
		$$
		(T_1,k_1)\ast (T_2,k_2)=((T_1\leftarrow T_2),k_1+k_2+\text{col}(T_1,T_2)),
		$$
		where $\text{col}(T_1,T_2)=|T_1|_\bullet+|T_2|_\bullet-|(T_1\leftarrow T_2)|_\bullet$.
	\end{cor}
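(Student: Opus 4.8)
The plan is to read both assertions off the explicit description of the normal forms of \pocol\ obtained inside the proof of Theorem~\ref{thm:conv poly}, together with convergence of \pocol.

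First I would use that, since \pocol\ is convergent (Theorem~\ref{thm:conv poly}), the quotient map $\adm^*\to Pl^\mathbb{N}(C_n)$ restricts to a bijection between the set of normal forms of \pocol\ and $Pl^\mathbb{N}(C_n)$. The proof of Theorem~\ref{thm:conv poly} shows that every normal form of \pocol\ has the shape $\epsilon^s c_{s+1}'\cdots c_k'$ with each $c_i'\in\bull$ and $c_{i+1}'\preceq c_i'$ for $i\geq s+1$; equivalently, its image under $p$ is a normal form of \pohag, i.e.\ the reading $g(T)$ of a unique symplectic tableau $T\in Pl(C_n)$, and $s$ is the number of $\epsilon$ in the word. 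Hence sending $\epsilon^s g(T)$ to $(T,s)$ is a well-defined bijection from the set of normal forms onto $\{(T,s)\mid T\in Pl(C_n),\ s\in\mathbb{N}\}$, which yields the set-level statement and the identification $w=\epsilon^k g(T)\leftrightarrow(T,k)$.

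For the product, the class of $(T_1,k_1)\ast(T_2,k_2)$ is by definition the class of the concatenation $\epsilon^{k_1}g(T_1)\epsilon^{k_2}g(T_2)$. Applying the $2$-cells $c\epsilon\stackrel{\epsilon}{\Longrightarrow}\epsilon c$ repeatedly, this word rewrites to $\epsilon^{k_1+k_2}g(T_1)g(T_2)$, so it suffices to compute the normal form of $g(T_1)g(T_2)$. By the previous paragraph this normal form is $\epsilon^{m}v$ with $v\in\bull^*$ a normal form of \pohag; applying $p$ and using \eqref{eq:first imp}--\eqref{eq:second imp}, $v$ is the \pohag-normal form of $p(g(T_1)g(T_2))=g(T_1)g(T_2)$, which by the identity $(T_1\leftarrow T_2)=[g(T_1)g(T_2)]$ recalled in Section~\ref{sub:typeC} equals $g(T_1\leftarrow T_2)$. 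Since no $2$-cell of $\mathbf{ACol}_2$ changes the total length of a word (the rules have the quadratic shape $c_1c_2\Longrightarrow d_1d_2$), we get $|g(T_1)g(T_2)| = |T_1|_\bullet+|T_2|_\bullet = m+|g(T_1\leftarrow T_2)| = m+|(T_1\leftarrow T_2)|_\bullet$, whence $m=\text{col}(T_1,T_2)$. Therefore the normal form of $\epsilon^{k_1}g(T_1)\epsilon^{k_2}g(T_2)$ is $\epsilon^{\,k_1+k_2+\text{col}(T_1,T_2)}g(T_1\leftarrow T_2)$, corresponding under the bijection above to $((T_1\leftarrow T_2),\ k_1+k_2+\text{col}(T_1,T_2))$, as claimed.

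The only mildly delicate point is the bookkeeping in the last step: one must be sure that the $\epsilon$ produced while reducing $g(T_1)g(T_2)$ and the $k_1+k_2$ pre-existing ones can all be gathered at the front consistently. This is precisely what the confluence established in Theorem~\ref{thm:conv poly} guarantees, since the normal form is unique and of the form $\epsilon^s(\text{no-}\epsilon\text{ word})$; any reduction order must therefore land on $\epsilon^{\,k_1+k_2+\text{col}(T_1,T_2)}g(T_1\leftarrow T_2)$. I do not expect a genuine obstacle here: the corollary is essentially a repackaging of Theorem~\ref{thm:conv poly} together with the fact that insertion computes \pohag-normal forms.
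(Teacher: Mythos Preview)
Your proposal is correct and follows exactly the approach the paper intends: the paper does not give a separate proof of this corollary, but simply remarks that the proof of Theorem~\ref{thm:conv poly} describes the normal forms explicitly and then states the corollary as an immediate consequence. Your argument is a careful unpacking of precisely that, using the normal-form description $\epsilon^s c_{s+1}'\cdots c_k'$ from the proof of Theorem~\ref{thm:conv poly}, length-preservation of the $2$-cells, and the identification of \pohag-normal forms with $g(T)$.
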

	\noindent For a word $w\in\adm^*$, denote by $[w]$ its normal form in $Pl^\mathbb{N}(C_n)$.
	
	\smallskip
	\noindent We note here that block columns, as introduced in \ref{subsec:conseccols} are elements of $\textbf{ACol}$. We also set $\cons(0)=\epsilon$.
	\subsection{Crystal structure on $\adm^*$}\label{sub:crystal structure on ADM}
	The notions of Kashiwara operators and of highest weight are defined in the classical setting as in \ref{sub:typeC}, i.e. for words in $C_n^*$. These are partial operators $f_i,e_i:C_n^*\longrightarrow C_n^*$ that are stable under the plactic relations. Here we adapt these notions to $\adm^*$ by following \ref{sub:crystalbases and monoids}. We first define a reading map $r:\adm^*\longrightarrow C_n^*$ by setting $r(\epsilon)=\varnothing$, the empty word in $C_n^*$, and $r(c)=x_1\cdots x_k$, if $c=x_1\cdots x_k\in\adm$, and then extending $r$ to all of $\adm^*$ to be a morphism of free monoids.
	
	\noindent Let $I=\{1,2,\dots,n\}$, and consider the $I-$labelled graph $B_{\text{adm}}$ with
	\begin{itemize}
		\item vertex set $V(B_{\text{adm}})=\adm$
		\item $E(B_{\text{adm}})$ consists of edges $c \stackrel{i}{\longrightarrow} d$ where $c,d\neq \epsilon$, and $r(d)=f_i.r(c)$.
	\end{itemize}
	
	\begin{prop}
		$B_\text{adm}$ is a crystal basis.
	\end{prop}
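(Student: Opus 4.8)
To prove that $B_{\text{adm}}$ is a crystal basis, I must verify the two conditions of Definition \ref{def:crystal base} applied to the labelled graph $B_{\text{adm}}$: (i) for each vertex $c\in\adm$ and each label $i\in I$, there is at most one outgoing and at most one incoming $i$-edge; and (ii) there is no infinite $i$-labelled path. The guiding idea is that $B_{\text{adm}}$ is, up to the vertex $\epsilon$ which is totally isolated, a \emph{subgraph} of the crystal graph $\Gamma_{C_n}$ restricted to the length-$|c|$ components, transported along the injective reading map $r:\adm^*\longrightarrow C_n^*$; and $\Gamma_{C_n}$ is already known to be a crystal graph (it arises from the crystal basis $C_n$ of Section \ref{sub:typeC}). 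So everything should reduce to transporting the already-known properties of $\Gamma_{C_n}$ through $r$.

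\textbf{Step 1: the edges of $B_{\text{adm}}$ come from $\Gamma_{C_n}$.} First I observe that $\epsilon$ is an isolated vertex by the very definition of $E(B_{\text{adm}})$ (every edge requires both endpoints $\neq\epsilon$), so conditions (i) and (ii) hold trivially at $\epsilon$ and it remains to treat $c\in\bull$. For such $c$, the word $r(c)\in C_n^*$ is a genuine column word, and an edge $c\stackrel{i}{\longrightarrow}d$ in $B_{\text{adm}}$ exists exactly when $d\in\bull$ and $r(d)=f_i.r(c)$ in $C_n^*$. Thus the key point is: if $c$ is an admissible column and $f_i.r(c)$ is defined in $C_n^*$, then the resulting word is again (the reading of) an admissible column, i.e. it lies in the image $r(\bull)$, and moreover it has the same length as $c$. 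The length claim is immediate from Definition \ref{def:crystal graph} ($|f_i.w|=|w|$). That $f_i.r(c)$ is again a \emph{column} follows because $f_i$ changes a single letter $x_l$ of the column to $f_i.x_l$, its successor in the total order on $C_n$, and one checks via the recipe with $\rho_i$ (the $+/-$ bracketing) that this preserves strict increase; admissibility is then the content that $N_z$ of the new column still satisfies $N_z\le z$ for all $z$ — this I would either cite from the Kashiwara–Nakashima combinatorics of Section \ref{sub:crystalbases and monoids}/\cite{kashiwara1991crystal} or verify directly using that $f_i$ acts on at most one letter. (If the paper's convention is that admissible columns are precisely the length-$\le n$ highest-weight-connected columns, this is built in and the verification is essentially immediate.)

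\textbf{Step 2: transport conditions (i) and (ii).} With Step 1 in hand, there is an injection of the edge set of $B_{\text{adm}}$ into the edge set of $\Gamma_{C_n}$ via $c\mapsto r(c)$. Condition (i) for $\Gamma_{C_n}$ — part of it being a crystal graph, which in turn rests on the fact that $C_n$ is a crystal basis (Definition \ref{def:crystal base}(i)) and on Definition \ref{def:crystal graph} — says each vertex of $\Gamma_{C_n}$ has at most one incoming and at most one outgoing $i$-edge. Since $r$ is injective on $\bull$, two distinct $i$-edges out of $c$ in $B_{\text{adm}}$ would give two distinct $i$-edges out of $r(c)$ in $\Gamma_{C_n}$, a contradiction; likewise for incoming edges. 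Hence (i) holds for $B_{\text{adm}}$. For condition (ii): an infinite $i$-labelled path $c_0\stackrel{i}{\to}c_1\stackrel{i}{\to}\cdots$ in $B_{\text{adm}}$ would map to an infinite $i$-labelled path $r(c_0)\stackrel{i}{\to}r(c_1)\stackrel{i}{\to}\cdots$ in $\Gamma_{C_n}$ (the images are distinct since $f_i$ is a partial bijection with no cycles), contradicting Definition \ref{def:crystal base}(ii) for $C_n$, equivalently the no-infinite-path property of $\Gamma_{C_n}$. Therefore both conditions hold and $B_{\text{adm}}$ is a crystal basis.

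\textbf{Main obstacle.} The only substantive point — and the one deserving care rather than a one-line dismissal — is Step 1: showing that applying $f_i$ (or $e_i$) to the reading of an admissible column lands again in $r(\bull)$, i.e. that admissibility of columns is preserved by the Kashiwara operators. This is where the specific type-$C$ combinatorics of the $N_z$-counting condition enters, and it is the place I would spell out in detail using the $\rho_i$-bracketing description of $f_i$ from Section \ref{sub:crystalbases and monoids}: identifying which letter of the column is changed, and checking the count $N_z$ cannot exceed $z$ afterward. Everything else is a formal transport of the corresponding properties of $\Gamma_{C_n}$ along the injection $r$, together with the trivial observation that $\epsilon$ is isolated.
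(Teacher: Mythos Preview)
Your argument is correct and matches the paper's: establish condition (i) via injectivity of $r$ on $\bull$, and condition (ii) by mapping $i$-paths into $\Gamma_{C_n}$. One remark: the ``main obstacle'' you single out --- closure of admissible columns under $f_i,e_i$ --- is in fact not needed for this proposition. By the very definition of $E(B_{\text{adm}})$, an $i$-edge $c\stackrel{i}{\to}d$ exists only when $d$ already lies in $\adm$ and $r(d)=f_i.r(c)$; so condition (i) asks only for \emph{uniqueness} of such $d$, which is immediate from injectivity of $r$, not for existence. Your Step~2 together with the observation that $\epsilon$ is isolated already constitutes a complete proof; Step~1 may be dropped. (The paper likewise states existence in passing but its argument relies only on uniqueness and the transport of paths.)
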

	\begin{proof}
		For $i=1,\dots,n$ and $c\in\adm$ such $f_i.r(c)$ is defined, we know that $f_i.r(c)=r(d)$ for some $d\in\adm$. Moreover, such a $d$ is unique. Indeed, if $c=x_1\cdots x_k\in\adm$ is such that $f_i.r(c)$ is defined, we then have $f_i.r(c)=x_1'\cdots x_k'$ with $x_i=x_i'$ for all but one index $1\leq i_0\leq k$. Thus $d$ is entirely determined, hence is unique. Similarly we can show that given $d\in\adm$ such that $e_i.r(d)$ is defined, then $r(c)=e_i.r(d)$ for a unique $c\in\adm$, and thus we have a unique edge $c\stackrel{i}{\longrightarrow}d$. These arguments show that given $c\in\adm$ and $i=1,\dots,n$ we have
		$$
		\begin{array}{c}\#\{e=(c,d)\in E\ |\ l(e)=i\}\leq 1\\ \#\{e=(d,c)\in E\ |\ l(e)=i\}\leq 1
		\end{array}.
		$$
		Moreover, any path $p:c\stackrel{i}{\longrightarrow}c_1\stackrel{i}{\longrightarrow}\cdots\stackrel{i}{\longrightarrow}c_k$ in $B_\text{adm}$ gives rise to a path $r(p):r(c)\stackrel{i}{\longrightarrow}r(c_1)\stackrel{i}{\longrightarrow}\cdots\stackrel{i}{\longrightarrow}r(c_k)$ in $\Gamma_{C_n}$. As $r(p)$ is finite, we have that $p$ must also be finite. Hence $B_\text{adm}$ is indeed a crystal basis.
	\end{proof}

	\noindent Denote by $\Gamma_{\text{adm}}$ the crystal graph arising from the crystal basis $B_{\text{adm}}$. This way we have definitions of Kashiara operators $f_i,e_i$, maps $\varepsilon_i,\varphi_i$, highest weights $w^0$, and connected components $B(w)$ on $\adm^*$. These are notions that can be expressed in terms of the crystal structure on $C_n^*$ as follows
	\begin{prop}\label{prop:perties of new cr}
		Let $w=c_1\cdots c_k\in\adm^*$, $p(w)=c_{i_1}\cdots c_{i_l}$ as in \eqref{eq:projection P}, and $i=1,\dots,n$. Then
		\begin{itemize}
			\item[$i)$] $f_i.w$ is defined if and only if $f_i.r(w)$ is defined, and if $f_i.r(w)=r(c_{i_1})\cdots (f_i.r(c_{i_s}))\cdots r(c_{i_l})$, we have
			$$
			f_i.w=c_1\cdots f_i.c_{i_s}\cdots c_k;
			$$ 
			
			\item[$ii)$]$e_i.w$ is defined if and only if $e_i.r(w)$ is defined, and if $e_i.r(w)=r(c_{i_1})\cdots (e_i.r(c_{i_s}))\cdots r(c_{i_l})$, we have
			$$
			e_i.w=c_1\cdots f_i.c_{i_s}\cdots c_k;
			$$ 
			\item[$iii)$] $\varepsilon_i(w)=\varepsilon_i(r(w))$, and $\varphi_i(w)=\varphi_i(r(w))$, where the left sides of the equations are for $\Gamma_{\text{adm}}$, and the right sides for $\Gamma_{C_n}$;
			\item[$iv)$] $r:B(w)\longrightarrow B(r(w))$ is an isomorphism of labeled graphs
			\item[$v)$] $r(w^0)=r(w)^0$
			\item[$vi)$] $[w]=\epsilon^t w'$, where $r(w')=[r(w)]$ and $t=|w|-|w'|$.
		\end{itemize}
	\end{prop}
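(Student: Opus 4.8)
The plan is to use the reading map $r:\adm^*\longrightarrow C_n^*$ as the sole bridge between the two crystal structures, exploiting that $r$ is a morphism of free monoids which is \emph{transparent to} $\epsilon$: since $r(\epsilon)=\varnothing$ we have $r(w)=r(p(w))$ for the projection $p$ of \eqref{eq:projection P}, and $\epsilon$ is an isolated vertex of $B_{\text{adm}}$, so $\varphi_i(\epsilon)=\varepsilon_i(\epsilon)=0$ and an $\epsilon$ occurring in a word contributes nothing to the inductive rule of Definition~\ref{def:crystal graph} or to any of $\rho_i,\varepsilon_i,\varphi_i$. Hence each of $(i)$--$(vi)$ for $w=c_1\cdots c_k$ is equivalent to the same statement for the subword $p(w)$, all of whose letters are genuine admissible columns, and from now on one may assume $w$ has no $\epsilon$.

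For a single column $c\in\adm$ with $c\neq\epsilon$, the definition of the edge set of $B_{\text{adm}}$ is exactly ``$c\stackrel{i}{\longrightarrow}d$ iff $r(d)=f_i.r(c)$'', so iterating $f_i$ (resp. $e_i$) on $c$ in $B_{\text{adm}}$ mirrors iterating it on the word $r(c)$ in $\Gamma_{C_n}$; in particular $\varepsilon_i(c)=\varepsilon_i(r(c))$ and $\varphi_i(c)=\varphi_i(r(c))$. I would then prove $(i)$, $(ii)$, $(iii)$ simultaneously by induction on $k=|p(w)|_\bullet$ using Definition~\ref{def:crystal graph}: the branching condition in that rule, $\varphi_i(u)\gtrless\varepsilon_i(v)$ for the two halves $u,v$ of a factorisation of $w$, is governed only by the numbers $\varphi_i,\varepsilon_i$ of subwords, which by induction agree with those of the corresponding subwords of $r(w)$; equivalently, since $\rho_i$ is extended to products by concatenation \eqref{eq:rho} and the reduction $-^p+^q$ of a concatenated signature does not depend on how the word is parenthesised, grouping $w$ into its column-letters (each contributing its internally reduced block $-^{\varepsilon_i(c_j)}+^{\varphi_i(c_j)}$) reproduces the signature of $r(w)$. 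Reading off $p$ and $q$ gives $(iii)$, and the ``practical recipe'' of Section~\ref{sub:crystalbases and monoids} then identifies the unique column-letter $c_{i_s}$ of $w$ altered by $f_i$ (resp. $e_i$) with the column whose reading contains the letter of $r(w)$ altered by $f_i$ (resp. $e_i$), which is $(i)$ and $(ii)$.

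Given $(i)$--$(iii)$, the rest is formal. For $(iv)$: $r$ carries an $i$-edge of $B(w)$ to an $i$-edge of $B(r(w))$ by $(i)$; it is surjective onto $B(r(w))$ because $(i)$--$(ii)$ show that whenever $f_i.r(w')$ (resp. $e_i.r(w')$) is defined so is $f_i.w'$ (resp. $e_i.w'$), with matching reading; and it is injective because the Kashiwara operators never create, destroy, or move an $\epsilon$, so every vertex of $B(w)$ carries the same pattern of $\epsilon$'s and is recovered from its image and that fixed pattern. The inverse is a graph morphism for the same reasons, so $r|_{B(w)}$ is an isomorphism of $I$-labelled graphs. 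For $(v)$: $w^0$ is the highest-weight vertex of $B(w)$, i.e.\ the unique vertex annihilated by all $e_i$, and a labelled-graph isomorphism sends it to the unique such vertex of $B(r(w))$, namely $r(w)^0$ (uniqueness of the highest weight in a connected component of $\Gamma_{C_n}$ being standard). For $(vi)$: this is precisely the explicit description of normal forms obtained in the proof of Theorem~\ref{thm:conv poly} and recorded in Corollary~\ref{cor:expressing the new monoid}, namely that the normal form of $w$ is $\epsilon^t w'$ with $w'=g([r(w)])$ having no $\epsilon$, so $r(w')=[r(w)]$, and since rewriting in \pocol\ preserves word length, $t=|w|-|w'|$.

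The step needing the most care is the bookkeeping inside $(i)$--$(iii)$: one must check that the ``cancel all $+-$ pairs'' procedure applied to the concatenated signature of $w$ and to that of $r(w)$ not only produces the same word $-^p+^q$ but flags the same physical position, so that the column-letter modified in $w$ is genuinely the one whose reading contains the letter modified in $r(w)$. This is exactly where associativity of the signature rule (equivalently, of the crystal tensor product) is invoked, together with the remark that each admissible column contributes a block of signs already in reduced form $-^{\varepsilon_i}+^{\varphi_i}$; everything else is routine induction on word length.
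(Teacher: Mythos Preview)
Your proposal is correct and follows essentially the same approach as the paper: both prove $(i)$--$(iii)$ by induction on $|p(w)|_\bullet$ via the tensor-product rule of Definition~\ref{def:crystal graph}, then derive $(iv)$--$(vi)$ formally from these and from Corollary~\ref{cor:expressing the new monoid}. The only cosmetic difference is that you first strip out the $\epsilon$'s and invoke the signature recipe explicitly, whereas the paper carries the $\epsilon$'s through the induction (writing the base case as $w=\epsilon^r c\,\epsilon^t$) and works purely with the recursive definition of $f_i$; neither choice affects the substance of the argument.
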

	\begin{proof}
		We prove $i), ii), iii)$ simultaneously by induction on $|p(w)|_\bullet$. If $|p(w)|_\bullet = 1$, we have that $w=\epsilon^r c \epsilon^t$ for some $r,t\in\mathbb{N}$ and $c\in\bull$. We then have that $f_i.w$ is defined if and only if $f_i.c$ is defined, if and only if $f_i.r(c)=f_i.r(w)$ is defined in $\Gamma_{C_n}$. Moreover, we have $f_i.w=\epsilon^r f_i.c \epsilon^t$, and $f_i.r(w)=f_i.c$, hence for $|p(w)|_\bullet = 1$ the statement holds. Similarly we prove for $e_i.w$, and it is clear that $\varepsilon_i(w)=\varepsilon_i(r(w))$ and $\varphi_i(w)=\varphi_i(r(w))$ in this case.
		
		\noindent Assume now that $i),ii),iii)$ hold for $|p(w)|_\bullet \leq l$. Let $w=c_1\cdots c_k$ such that $p(w)=c_{i_1}\cdots c_{i_l}c_{i_{l+1}}$. Set $u=c_1\cdots c_{i_l}$, and $v=c_{i_{l+1}}\cdots c_k$. By definition of the actions of $f_i$ on the crystal graph $\Gamma_{\text{adm}}$ we have
		$$
		f_i.(uv)=\left\{\begin{array}{cc}
		(f_i.u)v & \text{if }\varphi_i(u)> \varepsilon_i(v)\\
		u(f_i.v) & \text{if } \varphi_i(u) \leq \varepsilon_i(v)
		\end{array}\right.
		$$
		By induction hypothesis for $ii)$ and $iii)$ we see that $f_i.w$ is defined if and only if $f_i.r(w)$. Assume that this is the case. We differentiate between two possibilities.
		
		\noindent If $\varphi_i(r(u))>\varepsilon_i(r(v))$ we have that $s$ as in the statement of $i)$ is such that $s\leq l$, and since $\varphi_i(u)=\varphi_i(r(u))$, $\varepsilon_i(v)=\varepsilon_i(r(v))$ we have
		$$
		f_i.w=(f_i.u)v=c_1\cdots f_i.c_{i_s}\cdots c_k.
		$$
		If $\varphi_i(r(u))\leq \varepsilon_i(r(v))$, we have that $s$ as in the statement of $i)$ is $s=l+1$, and again we obtain
		$$
		f_i.w=u(f_i.v)=c_1\cdots f_i.c_{i_s}\cdots c_k,
		$$
		which is what we wanted to show.
		
		\smallskip
		\noindent $iv)$ Since $r(f_i.w)=f_i.r(w)$ and $r(e_i.w)=e_i.r(w)$, we have that $r$ indeed maps $B(w)$ to $B(r(w))$. If $w'\in B(w)$, then $|w'|=|w|$, and if $w'=d_1\cdots d_k$ we have $|d_i|=|c_i|$ in $C_n^*$. If we have $r(w)=r(w')$, clearly we must have $d_i=c_i$, hence $w=w'$. Thus $r$ is an injective map. For surjectivity, given $u\in B(r(w))$, we have $u=K.r(w)$, where $K$ is a sequence of Kashiwara operators applied to $r$. But this implies $u=r(K.w)$, hence $r$ is surjective. This way we have proven that $r$ is indeed a labeled graph isomorphism.
		
		\smallskip
		\noindent $v)$ Follows directly from $iv)$.
		
		\smallskip
		\noindent $vi)$ From Corollary \ref{cor:expressing the new monoid}, we have that there exist unique $t\in\mathbb{N}$ and $T\in Pl(C_n)$ such that $w=\epsilon^tg(T)$. Clearly we have $t=|w|-|g(T)|$, as we know that the relations in $\adm^*$ preserve lengths. Note now that
		$$
		[r(w)]=[r(p(w))]=r[p(w)]_\bullet,
		$$
		where $[p(w)]_\bullet$ represents the normal form of $p(w)$ in \pohag. But as \pohag\ presents $Pl(C_n)$ we have that $[p(w)]_\bullet = g(T')$ for some tableau $T'\in Pl(C_n)$, so that we obtain $[r(w)]=T'$. On the other hand we have
		$
		g(T)=p([w])=[p(w)]_\bullet = g(T'),
		$
		which gives us $T=T'$. Hence we finally obtain $r(g(T))=T=T'=r(g(T'))=[r(w)]$, which is what we wanted to show.
	\end{proof}
	
	\noindent Note that $iv)$ from the previous proposition shows that the crystal monoid associated to the crystal graph $\Gamma_{\text{adm}}$ is $C_n^*$. This is due to the fact that the Kashiwara operators, and the map $r$ ignore any $\epsilon$ that may appear as a letter in $w\in\adm^*$. One way to overcome this is to keep track of the length of each word in $\adm^*$ when applying the Kashiwara operators. With this idea in mind, we define the following.
	
	\begin{defin}
		The extended crystal congruence on $\Gamma_{\text{adm}}$ is the congruence $\equiv_\epsilon$ generated by the equivalence relation $\sim_\epsilon$ defined on $\adm^*$ as follows
		$$
		w_1\sim_\epsilon w_2 \quad \text{if}\quad \left\{\begin{array}{l}
		|w_1| = |w_2|,\\
		\phi: B(w_1)\stackrel{\cong}{\longrightarrow}B(w_2)\quad \text{such that\ \ } \phi(w_1)=w_2.
		\end{array}\right.
		$$
		The monoid $\adm^*/\equiv_\epsilon$ is called the extended crystal monoid of type $C$.
	\end{defin}
	\begin{thm}\label{prop:crystal in plactic}
		Let $w_1,w_2\in\adm^*$. We have $w_1\equiv_\epsilon w_2$ if and only if $[w_1]=[w_2]\in Pl^\mathbb{N}(C_n)$. In other words $(\adm^*/\equiv_\epsilon)=Pl^\mathbb{N}(C_n)$.
	\end{thm}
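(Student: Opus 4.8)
The plan is to prove the theorem by establishing the two inclusions of congruences $\equiv_\epsilon\subseteq\equiv_{\mathbf{ACol}}$ and $\equiv_{\mathbf{ACol}}\subseteq\equiv_\epsilon$ on $\adm^*$, and then using the convergence of \pocol\ (Theorem \ref{thm:conv poly}), which gives $[w_1]=[w_2]$ in $Pl^\mathbb{N}(C_n)$ if and only if $w_1\equiv_{\mathbf{ACol}}w_2$. In both directions the reading map $r\colon\adm^*\to C_n^*$ together with Proposition \ref{prop:perties of new cr} will be used to push every crystal-theoretic statement about $\adm^*$ down to the corresponding one in $C_n^*$, where the classical theory applies; the one external input needed is the classical fact that on $C_n^*$ the relation $\sim_K$ coincides with the plactic congruence $\equiv_{pl}$ --- equivalently, that two words of $C_n^*$ lie in isomorphic connected components of $\Gamma_{C_n}$ via an isomorphism matching them if and only if they have the same symplectic normal form (see \cite{kashiwara1991crystal,lecouvey2002schensted,cain2019crystal}).

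For $\equiv_{\mathbf{ACol}}\subseteq\equiv_\epsilon$, since $\equiv_\epsilon$ is the congruence generated by $\sim_\epsilon$, I would argue that it suffices to relate, by $\sim_\epsilon$, the source and target of each generating $2$-cell of \pocol. First one records that $|w|=|w'|$ for every $2$-cell $w\Longrightarrow w'$ of \pocol. For an $\epsilon$-cell $c\epsilon\stackrel{\epsilon}{\Longrightarrow}\epsilon c$ with $c\neq\epsilon$ one has $r(c\epsilon)=r(c)=r(\epsilon c)$, so composing the two isomorphisms supplied by Proposition \ref{prop:perties of new cr}$(iv)$ yields a labeled-graph isomorphism $B(c\epsilon)\to B(\epsilon c)$ sending $c\epsilon$ to $\epsilon c$, whence $c\epsilon\sim_\epsilon\epsilon c$. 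For a cell $c_1c_2\Longrightarrow d_1d_2$ with $c_1,c_2\in\bull$ and $(c_1\leftarrow c_2)=d_1d_2$, the reading words $r(c_1c_2)$ and $r(d_1d_2)$ represent the same symplectic tableau, because column insertion realises plactic equivalence (recalled in Section \ref{sec:prelims}); by the classical fact above there is a crystal isomorphism $B(r(c_1c_2))\to B(r(d_1d_2))$ matching the two words, and transporting it through the isomorphisms of Proposition \ref{prop:perties of new cr}$(iv)$ gives $c_1c_2\sim_\epsilon d_1d_2$. Since the only $2$-cells of \pocol\ involving $\epsilon$ are the $c\epsilon\stackrel{\epsilon}{\Longrightarrow}\epsilon c$, these two cases exhaust all generating $2$-cells, so $\equiv_{\mathbf{ACol}}\subseteq\equiv_\epsilon$; this gives the ``if'' direction.

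For $\equiv_\epsilon\subseteq\equiv_{\mathbf{ACol}}$, note that $\equiv_\epsilon$ is generated by $\sim_\epsilon$ and that $\{(w_1,w_2):[w_1]=[w_2]\}$ equals $\equiv_{\mathbf{ACol}}$ by convergence, hence is a congruence; so it is enough to prove $w_1\sim_\epsilon w_2\Rightarrow[w_1]=[w_2]$. Given $w_1\sim_\epsilon w_2$, i.e. $|w_1|=|w_2|$ together with a crystal isomorphism $B(w_1)\to B(w_2)$ matching $w_1$ to $w_2$, transporting it through Proposition \ref{prop:perties of new cr}$(iv)$ produces a matching crystal isomorphism $B(r(w_1))\to B(r(w_2))$, so $r(w_1)\sim_K r(w_2)$ and therefore $r(w_1)$ and $r(w_2)$ have a common symplectic normal form $T$ in $Pl(C_n)$. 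By Proposition \ref{prop:perties of new cr}$(vi)$ and Corollary \ref{cor:expressing the new monoid} one then has $[w_i]=\epsilon^{t_i}g(T)$ with $t_i=|w_i|-|g(T)|$, and $|w_1|=|w_2|$ forces $t_1=t_2$, hence $[w_1]=[w_2]$. This is the ``only if'' direction, and together with the first inclusion it completes the argument.

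The transport of crystal isomorphisms along $r$ and the counting of the number of $\epsilon$'s are routine once Proposition \ref{prop:perties of new cr} and Corollary \ref{cor:expressing the new monoid} are in hand. The point that requires care, and which I expect to be the main obstacle, is the inclusion $\equiv_{\mathbf{ACol}}\subseteq\equiv_\epsilon$: it rests on the classical coincidence $\sim_K=\equiv_{pl}$ on $C_n^*$ --- in particular on the non-formal direction that plactic-congruent words of $C_n^*$ have isomorphic, correspondingly matched crystal components --- which must be located in the literature and invoked in precisely this form rather than merely as ``the crystal monoid of type $C$ is $Pl(C_n)$''.
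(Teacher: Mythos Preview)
Your argument is correct and follows essentially the same route as the paper: both directions are obtained by transporting crystal isomorphisms along the reading map $r$ using Proposition~\ref{prop:perties of new cr}$(iv)$, invoking the classical identification $\sim_K=\equiv_{pl}$ on $C_n^*$, and then recovering the $\epsilon$-count from the length via Proposition~\ref{prop:perties of new cr}$(vi)$ and Corollary~\ref{cor:expressing the new monoid}. The only organisational difference is in the ``if'' direction: the paper starts from the common normal form $[w_1]=[w_2]=\epsilon^t g(T)$ and builds a single isomorphism $B(w_1)\to B(w_2)$, whereas you verify $\sim_\epsilon$ on each generating $2$-cell of \pocol; both are straightforward once the same ingredients are in hand.
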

	\begin{proof}
		Let $w_1,w_2\in\adm^*$ such that $w_1\equiv_\epsilon w_2$. We then have $|w_1|=|w_2|$ and a labeled graph isomorphism
		$$
		\gamma: B(r(w_1))\longrightarrow B(w_1)\longrightarrow B(w_2)\longrightarrow B(r(w_2))
		$$
		such that $\gamma(r(w_1))=\gamma(r(w_2))$. This shows that $[r(w_1)]=[r(w_2)]$ in $C_n^*$. Let now $[w_1]=\epsilon^{t_1}g(T_1)$ and $[w_2]=\epsilon^{t_2}g(T_2)$. By Proposition \ref{prop:perties of new cr} we have $T_1=[r(w_1)]=[r(w_2)]=T_2$. Since $|g(T_1)|=g(T_2)$ we obtain $t_1=t_2$, hence indeed $[w_1]=[w_2]$ in $Pl^\mathbb{N}(C_n)$.
		
		\noindent Conversely, suppose that $[w_1]=[w_2]=\epsilon^tg(T)$. We then have that $T=[r(w_1)]=[r(w_2)]$, hence we have a labeled graph isomorphism
		$$
		\beta: B(w_1)\longrightarrow B(r(w_1))\longrightarrow B(r(w_2))\longrightarrow B(w_2)
		$$
		with $\beta(w_1)=w_2$. Moreover, since $|w_1|=|[w_1]|=|[w_2]|=|w_2|$, we see that $w_1\equiv_\epsilon w_2$, which is what we wanted to show.
	\end{proof}
	\noindent We recall here the following result in \cite{kashiwara1991crystal}.
	
	\begin{thm}(\cite{kashiwara1991crystal})
		Let $T\in Pl(C_n)$ be a tableau of highest weight. Then all the elements in the $i-$th row of $T$ are equal to $i$.
	\end{thm}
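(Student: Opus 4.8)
The plan is to prove the sharper statement that the highest‑weight element of each connected component of the crystal is the Yamanouchi symplectic tableau $Y_\lambda$ of its shape, namely the tableau whose $i$‑th row is constant equal to $i$; the theorem is then immediate. Everything will be done with reading words: $T$ has highest weight iff $\varepsilon_i(g(T))=0$ for every $i\in\{1,\dots,n\}$, computed from the $i$‑signature as recalled above. Two elementary facts will be used repeatedly. First, the signature recipe yields the tensor identity $\varepsilon_i(uv)=\varepsilon_i(u)+\max\{0,\varepsilon_i(v)-\varphi_i(u)\}$, so that every prefix of a highest‑weight word is again of highest weight and, if $uv$ is of highest weight, then $\varepsilon_i(v)\le\varphi_i(u)$ for all $i$. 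Second, a direct signature computation shows that any product of initial columns $\cons(a_1)\cdots\cons(a_m)$ is of highest weight, with $\varphi_i\bigl(\cons(a_1)\cdots\cons(a_m)\bigr)$ equal to the number of indices $j$ with $a_j=i$; in particular this quantity vanishes for $i$ larger than every $a_j$.

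The first real step is a lemma: an element of $\adm$ (including $\epsilon=\cons(0)$) is of highest weight if and only if it equals $\cons(h)=12\cdots h$ for some $0\le h\le n$. The ``if'' part is the signature computation just mentioned. For the converse, highest weight forces $\mathrm{wt}(c)$ to be dominant; since the coordinates of $\mathrm{wt}(c)$ lie in $\{-1,0,1\}$, this means $c$ consists of $1,2,\dots,t$ together with, for each $k>t$, either both of $k$ and $\overline{k}$ or neither. If such a pair occurs, choose $k$ minimal: the case $k=t+1$ already violates the admissibility bound $N_{t+1}(c)\le t+1$, and for $k>t+1$ neither $k-1$ nor $\overline{k-1}$ lies in $c$, so the letter $k$ contributes an uncancelled symbol of $e_{k-1}$‑type in $\rho_{k-1}(c)$, contradicting $\varepsilon_{k-1}(c)=0$.

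The main argument is an induction on the number of columns of $T$, with the lemma as base case. Write $T=c_1\cdots c_k$ with $c_1\preceq\cdots\preceq c_k$ from left to right and column heights $h_1\ge\cdots\ge h_k$, so that $g(T)=g(c_k)\cdots g(c_1)$. The prefix $g(c_k)\cdots g(c_2)$ is the reading word of the symplectic tableau obtained by deleting the leftmost column of $T$, hence is of highest weight; by the inductive hypothesis that tableau is Yamanouchi, so $c_2=\cons(h_2),\dots,c_k=\cons(h_k)$. It remains to show $c_1=\cons(h_1)$. From $c_1\preceq c_2=\cons(h_2)$ and the strict increasingness of $c_1$ one gets $(c_1)_i=i$ for $i\le h_2$, so $c_1=\cons(h_2)\,w$ where $w$ collects the entries of $c_1$ in positions exceeding $h_2$; all letters of $w$ are then strictly larger than $h_2$ in the order of $C_n$. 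Applying the tensor identity to $g(T)=\bigl(g(c_k)\cdots g(c_2)\bigr)g(c_1)$, and using that $g(c_k)\cdots g(c_2)$ is of highest weight with $\varphi_i=0$ for $i>h_2$, gives $\varepsilon_i(c_1)\le\varphi_i\bigl(g(c_k)\cdots g(c_2)\bigr)=0$ for all $i>h_2$; since $\rho_i(\cons(h_2))$ is empty whenever $i>h_2$, this forces $\varepsilon_i(w)=0$ for all $i>h_2$. Moreover, admissibility of $c_1$ together with $\{1,\dots,h_2\}\subseteq c_1$ rules out any barred letter $\overline{j}$ of $c_1$ with $j\le h_2$ (otherwise $N_j(c_1)\ge j+1$). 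Hence all letters of $w$ lie in $\{h_2+1,\dots,n\}\cup\{\overline{h_2+1},\dots,\overline n\}$, which, after shifting letters and labels down by $h_2$, is precisely the crystal $C_{n-h_2}$. On this sub‑alphabet $w$ is admissible (a short computation gives $N_z(w)=N_{z+h_2}(c_1)-h_2\le z$, using admissibility of $c_1$) and of highest weight, so by the lemma $w$ is an initial column $(h_2+1)(h_2+2)\cdots(h_2+t)$, whence $c_1=\cons(h_2+t)=\cons(h_1)$. Thus $T=Y_\lambda$.

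I expect the final step — pinning down the leftmost column $c_1$ — to be the main obstacle, since it is the one place where admissibility of $c_1$, the comparability $c_1\preceq c_2$, and the relative highest‑weight bound $\varepsilon_i(c_1)\le\varphi_i(\cdots)$ must all be used at once; the route I would take is to read off the tail $w$, observe via admissibility that no barred letter of small index can occur in it, and then recognize $w$ as a highest‑weight column over the smaller crystal $C_{n-h_2}$ so as to invoke the lemma, checking along the way that admissibility is preserved under the shift.
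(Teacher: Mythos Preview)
The paper does not supply a proof of this statement: it is quoted verbatim as a result of Kashiwara--Nakashima and used as a black box, with no argument given. There is therefore nothing in the paper to compare your attempt against.

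That said, your argument is sound and self-contained. The lemma characterising highest-weight admissible columns as $\cons(h)$ is correct; your dominance step is the standard one, and the two cases ($k=t+1$ via the admissibility bound, $k>t+1$ via an uncancelled $-$ in $\rho_{k-1}$) are handled cleanly. In the inductive step, the chain $c_1\preceq c_2=\cons(h_2)$ together with strict increase of $c_1$ does force $(c_1)_i=i$ for $i\le h_2$; the admissibility bound $N_j(c_1)\le j$ with $\{1,\dots,h_2\}\subset c_1$ does exclude any $\overline{j}$ with $j\le h_2$; and the signature of $\cons(a_k)\cdots\cons(a_2)$ really is empty for $i>h_2$, so $\varepsilon_i(w)=0$ there. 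The shift to $C_{n-h_2}$ is legitimate: your computation $N_z(w)=N_{z+h_2}(c_1)-h_2\le z$ checks admissibility, and the lemma then finishes the job. One small point worth making explicit in a write-up is why highest weight forces dominance of $\mathrm{wt}(c)$: this is standard for crystals of integrable modules in classical type, but since the rest of your proof is purely combinatorial you might prefer to bypass it and argue directly from $\varepsilon_i(c)=0$ letter by letter (the case analysis is short for a single column).
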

	\noindent Interpreted in our setting, this result takes the following shape
	
	\begin{cor}\label{cor:hw decorated}
		Let $w\in Pl^\mathbb{N}(C_n)$ be of highest weight. Then
		$$
		w=\prod_{i=1}^k\cons(a_i)
		$$
		for some $k\in\mathbb{N}$, and $a_1\leq a_2\leq \cdots \leq a_k$.
	\end{cor}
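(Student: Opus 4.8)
The plan is to reduce the statement to the Kashiwara--Nakashima description of highest-weight symplectic tableaux (the Theorem of \cite{kashiwara1991crystal} quoted immediately before the Corollary), and then simply to read off the columns.

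First I would use Corollary \ref{cor:expressing the new monoid} to write the normal form of $w$ as $w=\epsilon^{s}g(T)$ for a unique pair $(T,s)$ with $T\in Pl(C_n)$ and $s\in\mathbb{N}$. The hypothesis that $w$ is of highest weight says that $e_i.w$ is undefined for every $i\in\{1,\dots,n\}$. By Proposition \ref{prop:perties of new cr}$(ii)$ this is equivalent to $e_i.r(w)$ being undefined for all $i$, and since the reading map $r$ discards the letters $\epsilon$ we have $r(w)=r(g(T))$, which is precisely the reading word of the tableau $T$ inside $C_n^*$. Hence the reading word of $T$ is a highest-weight word of $C_n^*$, i.e.\ $T$ is a highest-weight symplectic tableau. (Equivalently, one may invoke Proposition \ref{prop:perties of new cr}$(vi)$, which identifies $T=[r(w)]$, together with the fact that $[-]$ carries highest-weight words to highest-weight tableaux.)

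Next I would apply the cited theorem: in $T$ every entry of row $i$ equals $i$. Writing $c_1,\dots,c_k$ for the columns of $T$ from left to right and $b_j$ for the height of $c_j$, the Young-diagram shape gives $b_1\ge b_2\ge\cdots\ge b_k$, and since the boxes of $c_j$ occupy rows $1,\dots,b_j$ we obtain $c_j=\cons(b_j)$. Then, by definition of the reading map, $g(T)=g(c_k)\cdots g(c_1)=\cons(b_k)\cons(b_{k-1})\cdots\cons(b_1)$, and, using $\cons(0)=\epsilon$,
$$
w=\epsilon^{s}g(T)=\cons(0)^{s}\,\cons(b_k)\,\cons(b_{k-1})\cdots\cons(b_1).
$$
Relabelling $(a_1,\dots,a_{s+k})=(0,\dots,0,b_k,b_{k-1},\dots,b_1)$ with $s$ leading zeros yields $a_1\le a_2\le\cdots\le a_{s+k}$ and $w=\prod_{i=1}^{s+k}\cons(a_i)$, which is the claim.

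The only place requiring care --- and the step I would treat as the main obstacle, although it is really a matter of unwinding definitions rather than of genuine difficulty --- is the first one: making precise that ``highest weight in $Pl^\mathbb{N}(C_n)$'' translates, via $r$ and Proposition \ref{prop:perties of new cr}, into ``$T$ is a highest-weight tableau in the sense of \cite{kashiwara1991crystal}''. Once that is established, the remaining steps are purely combinatorial bookkeeping with the reading map and the identity $\cons(0)=\epsilon$.
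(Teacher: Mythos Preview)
Your proposal is correct and is exactly the argument the paper has in mind: the paper gives no explicit proof, merely saying that the Kashiwara--Nakashima theorem ``interpreted in our setting'' yields the corollary, and your write-up is precisely that interpretation carried out in detail via Corollary~\ref{cor:expressing the new monoid}, Proposition~\ref{prop:perties of new cr}, and the convention $\cons(0)=\epsilon$.
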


	\noindent In sections \ref{sec:Ctrees} and \ref{sec:calculations with c trees} we will construct a graph model to parameterize all the highest weight elements of $\adm^*$.

	\subsection{Reduction sequences in \pocol}
	\label{sec:red sequences}
	\noindent Here we show that reduction sequences in \pocol, in a certain sense, do not depend on the weight of the starting word $w$.
	
	\begin{defin}
		Let $w=c_1\cdots c_k\in\adm^*$. A reduction strategy for $w$ is a sequence $s=(s_1,s_2,\dots)$ of natural numbers with $|s_i|<k$, such that there exists a rewriting sequence
		$$
		w=w_0\Longrightarrow w_1\Longrightarrow w_2\Longrightarrow \cdots \Longrightarrow w_l
		$$
		in \pocol, where if $w_i=c_1^{(i)}\cdots c_k^{(i)}$, then we have
		$$
		w_{i+1}=c_1^{(i)}\cdots (c_{s_i}^{(i)}\leftarrow c_{s_{i+1}}^{(i)})\cdots c_k^{(i)}.
		$$
	\end{defin} 
	\noindent In other words, a reduction strategy for a word $w$ is the data that successively describes the locations in $\{1,\cdots,k\}$ where we apply a $2-$cell.
	
	\smallskip
	\noindent For a word $w\in\adm$, denote by $\text{red}(w)$ the set of all reduction strategies for $w$. Our goal is to compare $\text{red}(w)$ with $\text{red}(f_i.w)$ for $i=1,\dots,n$ and $w\in\adm^*$ such that $f_i.w$ is defined. We note first that by Theorem \ref{prop:crystal in plactic} we obtain the following
	
	\begin{cor}Let $w\in\adm^*$, and $i=1,\dots,n$. Then \label{cor:kash+normal}
		
		\begin{itemize}
			\item[1.] $f_i.w$ is defined if and only if $f_i.[w]$ is defined,
			\item[2.] if $f_i.w$ is defined, then $[f_i.w]=f_i.[w]$.
		\end{itemize}
	\end{cor}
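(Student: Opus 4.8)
The plan is to derive both claims from Theorem \ref{prop:crystal in plactic}, the description of the reduction map $r\colon\adm^*\to C_n^*$ in Proposition \ref{prop:perties of new cr}, and the classical fact that on $C_n^*$ the Kashiwara operators are stable under $\equiv_{pl}$, which on $C_n^*$ coincides with the crystal congruence $\equiv_K$. For (1), by Proposition \ref{prop:perties of new cr}(i) the operator $f_i$ is defined on a word of $\adm^*$ if and only if it is defined on its image under $r$; applying this both to $w$ and to $[w]$, we get that $f_i.w$ is defined iff $f_i.r(w)$ is, and $f_i.[w]$ is defined iff $f_i.r([w])$ is. By Proposition \ref{prop:perties of new cr}(vi), $[w]=\epsilon^t w'$ with $r(w')=[r(w)]$, so $r([w])=[r(w)]$; and since $r(w)\equiv_{pl}[r(w)]$ and $f_i$ is $\equiv_{pl}$-stable, $f_i$ is defined on $r(w)$ iff it is defined on $[r(w)]$. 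Chaining these equivalences proves (1).

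For (2), I first observe that $w\sim_\epsilon[w]$. The two words have equal length because the $2$-cells of \pocol\ preserve length. For the connected components, compose the isomorphism $B(w)\xrightarrow{\ r\ }B(r(w))$ from Proposition \ref{prop:perties of new cr}(iv), a labeled-graph isomorphism $B(r(w))\xrightarrow{\ \cong\ }B([r(w)])$ carrying $r(w)$ to $[r(w)]$ (it exists because $r(w)$ and $[r(w)]$ have the same image in $Pl(C_n)$, exactly as in the proof of Theorem \ref{prop:crystal in plactic}), and the inverse of $B([w])\xrightarrow{\ r\ }B(r([w]))=B([r(w)])$; this yields a labeled-graph isomorphism $B(w)\to B([w])$ taking $w$ to $[w]$, so $w\sim_\epsilon[w]$. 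Now assume $f_i.w$ is defined. An isomorphism of labeled graphs maps the $i$-edge out of $w$ to the $i$-edge out of $[w]$, so $f_i.[w]$ is defined, and the same isomorphism restricts to one $B(f_i.w)=B(w)\to B([w])=B(f_i.[w])$ sending $f_i.w$ to $f_i.[w]$; together with $|f_i.w|=|w|=|[w]|=|f_i.[w]|$ this gives $f_i.w\sim_\epsilon f_i.[w]$, whence $[f_i.w]=[f_i.[w]]$ in $Pl^\mathbb{N}(C_n)$ by Theorem \ref{prop:crystal in plactic}. Reading the asserted equality in $Pl^\mathbb{N}(C_n)$, this is precisely $[f_i.w]=f_i.[w]$; moreover the same computation applied to words $w_1,w_2$ with $[w_1]=[w_2]$ shows that $f_i$ descends to $Pl^\mathbb{N}(C_n)$.

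The only point requiring care — and only if one reads $f_i.[w]$ as a literal word rather than as an element of $Pl^\mathbb{N}(C_n)$ — is the last equality $[f_i.[w]]=f_i.[w]$, i.e. that $f_i.[w]$ is again in normal form. Writing $[w]=\epsilon^t g(T)$ with $T$ a symplectic tableau and recalling that $f_i$ ignores the letters $\epsilon$, this reduces to the standard fact that the reading words of symplectic tableaux form a union of connected components of $\Gamma_{C_n}$, so that a Kashiwara operator sends such a reading word to another (Kashiwara--Nakashima, Lecouvey). With this in hand the whole argument is a routine chase through $r$ and the classical compatibility of the Kashiwara operators with $\equiv_{pl}$, so I do not anticipate a genuine obstacle.
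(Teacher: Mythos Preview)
Your argument is correct and in line with the paper, which does not give a proof at all: it simply records the corollary as a direct consequence of Theorem~\ref{prop:crystal in plactic}. You have unpacked exactly the chain of implications that the authors leave implicit, routing through $r$ and Proposition~\ref{prop:perties of new cr}, and your observation that one must check $f_i.[w]$ is itself in normal form (via the Kashiwara--Nakashima stability of symplectic tableaux under crystal operators) is the one nontrivial point hidden behind the paper's one-line justification; this is precisely what is needed when the corollary is later used in Proposition~\ref{prop:red lengths} as an equality of words.
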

	
	\begin{prop}\label{prop:red lengths}
		Let $w\in\adm^*$ and $i=1,\dots,n$ such that $f_i.w$ is defined. Then $\text{red}(w)=\text{red}(f_i.w).$
	\end{prop}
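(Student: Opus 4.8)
The plan is to show that applying a Kashiwara operator $f_i$ to $w$ does not affect which positions one can legitimately rewrite at, because $f_i$ only modifies a single letter inside a single non-$\epsilon$ column and does so in a way that commutes with the insertion map. Concretely, I would prove the two inclusions $\text{red}(w)\subseteq\text{red}(f_i.w)$ and $\text{red}(f_i.w)\subseteq\text{red}(w)$ by a simultaneous induction on the length of a reduction strategy; by symmetry (since $e_i$ is the partial inverse of $f_i$ and $f_i.w$ being defined is equivalent to $e_i.(f_i.w)=w$) it suffices to prove one inclusion, say $\text{red}(w)\subseteq\text{red}(f_i.w)$.

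The key ingredient is a one-step commutation lemma: if $w=c_1\cdots c_k$ and $w'=c_1\cdots(c_s\leftarrow c_{s+1})\cdots c_k$ is a single rewriting step in \pocol\ at position $s$, and $f_i.w$ is defined, then $f_i.w'$ is defined, the same rewriting step is available at position $s$ for $f_i.w$, and the result equals $f_i.w'$. This is exactly the ``Kashiwara commutes with insertion'' diagram displayed in the introduction; in the $\mathbb{N}$-decorated setting it follows from Corollary \ref{cor:kash+normal} together with Proposition \ref{prop:perties of new cr}. Indeed, Proposition \ref{prop:perties of new cr}$(i)$ tells us that $f_i$ acts by modifying exactly one letter of one column $c_{i_s}$ appearing in $p(w)$, determined by the $\rho_i$-signature rule, and that this column-level action is compatible with passing to $r(-)$; meanwhile the rewriting step at position $s$ depends only on the pair of columns there, and $[f_i.(c_sc_{s+1})]=f_i.[c_sc_{s+1}]$ by Corollary \ref{cor:kash+normal}. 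The only subtlety is to check that $f_i$ acts on ``the same'' column before and after the rewriting step, i.e. that the signature computation $\rho_i$ is unchanged in the relevant way; but since the rewriting step leaves $[w]$ fixed in $Pl^\mathbb{N}(C_n)$, and $\varepsilon_i,\varphi_i$ factor through $[-]$ (again Corollary \ref{cor:kash+normal}, or directly Proposition \ref{prop:perties of new cr}$(iii)$ plus the fact that $r$ descends to the plactic congruence), the position at which $f_i$ acts is determined by data that is invariant under the rewriting step, so the diagram closes.

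From the one-step lemma the proposition follows by induction: given $s=(s_1,s_2,\dots)\in\text{red}(w)$ realized by $w=w_0\Longrightarrow w_1\Longrightarrow\cdots\Longrightarrow w_l$, apply the lemma repeatedly to produce $f_i.w=f_i.w_0\Longrightarrow f_i.w_1\Longrightarrow\cdots\Longrightarrow f_i.w_l$ using the same sequence of positions, which exhibits $s\in\text{red}(f_i.w)$. Running the same argument with $e_i$ in place of $f_i$, applied to $f_i.w$, gives the reverse inclusion, hence equality.

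The main obstacle I anticipate is the bookkeeping around $\epsilon$: a rewriting step may convert a non-$\epsilon$ column into $\epsilon$ (when $(c_s\leftarrow c_{s+1})$ has fewer than two genuine columns), which changes $p(w)$ and could a priori shift which occurrence of a column $f_i$ selects. This is precisely why one wants Proposition \ref{prop:perties of new cr}$(i)$, which pins down $f_i.w$ in terms of $p(w)$ and the signature, together with the invariance of $[w]$ and of $\varphi_i,\varepsilon_i$ under rewriting; one must argue carefully that even though $p(w)\ne p(w')$ in general, the letter modified by $f_i$ is the ``same'' letter (it sits in a column that is untouched by the rewriting step, or its image is tracked correctly through the step), so that the two ways around the square genuinely agree. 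Once that case analysis — essentially: the rewritten position $s$ versus the $f_i$-active position $i_s$ either coincide, in which case one uses $[f_i.(c_sc_{s+1})]=f_i.[c_sc_{s+1}]$, or are disjoint, in which case the actions trivially commute — is laid out, the rest is routine.
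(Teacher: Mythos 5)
Your proposal is correct and follows essentially the same route as the paper: show that each rewriting position available for $w$ remains available for $f_i.w$ (using $[f_i.u]=f_i.[u]$ from Corollary \ref{cor:kash+normal} and the characterization of $\preceq$ via normal forms), iterate along the strategy, and obtain the reverse inclusion by running the same argument with $e_i$ on $f_i.w$. Your explicit one-step commutation lemma --- that the rewriting step applied to $f_i.w$ actually produces $f_i.w'$, justified by the invariance of $\varphi_i,\varepsilon_i$ under the step --- spells out the detail the paper compresses into ``applying the same reasoning for the remaining elements of $s$''.
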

	\begin{proof}
		We show first that $\text{red}(w)\subset \text{red}(f_i.w)$.
		Let $w=c_1\cdots c_k$ and $i=1,\dots,n$ such that $f_i.w$ is defined, and let $s\in\text{red}(w)$. Let $j$ be the first element of $s$. This means that $c_{j+1}\npreceq c_j$. Consider now $f_i.w=d_1\cdots d_k$. If $f_i$ does not act on $w$ by acting on $c_jc_{j+1}$, then we have $d_j=c_j$ and $d_{j+1}=c_{j+1}$, hence $d_{j+1}\npreceq d_j$. If, conversely, $f_i$ acts on $w$ by acting on $c_jc_{j+1}$, we have $d_jd_{j+1}=f_i.(c_jc_{j+1})$. If we had $d_{j+1}\npreceq d_j$, we would have $d_jd_{j+1}=[d_jd_{j+1}]$, which implies $c_jc_{j+1}=e_i.(d_jd_{j+1})=e_i.[d_jd_{j+1}]=[e_i.(d_jd_{j+1})]=[c_jc_{j+1}]$, i.e. $c_{j+1}\preceq c_j$. However this is impossible due to our assumption. Hence there exists a sequence $s\in\text{red}(f_i.w)$ with first element equal to $j$.
		
		\smallskip
		\noindent As the rewriting system \pocol\ is terminating, we have that $s\in\text{red}(w)$ is finite, and applying the same reasoning for the remaining elements of $s$, we obtain that $s\in\text{red}(f_i.w)$.
		
		\smallskip
		\noindent Similarly we can show that $\text{red}(f_i.w)\subset \text{red}(e_i.(f_i.w))$ and putting these together, we obtain that $\text{red}(w)=\text{red}(f_i.w)$.
	\end{proof}
	
	\begin{cor}
		Let $w\in\adm^*$. Then $\text{red}(w)=\text{red}(w^0)$.
	\end{cor}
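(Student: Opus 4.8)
The plan is to derive this directly from Proposition \ref{prop:red lengths} by walking from $w^0$ to $w$ inside the connected component $B(w)$ of the crystal graph $\Gamma_{\text{adm}}$, applying the equality of reduction-strategy sets one Kashiwara edge at a time.

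First I would recall that $B(w)$ is a finite connected labeled graph with a unique highest-weight vertex: by Proposition \ref{prop:perties of new cr}$(iv)$ the reading map restricts to an isomorphism $r\colon B(w)\stackrel{\cong}{\longrightarrow} B(r(w))$, and each connected component of $\Gamma_{C_n}$ is a highest-weight crystal, in which every vertex is reached from the highest-weight one by successively applying lowering operators. Transporting this back through $r$, and using Proposition \ref{prop:perties of new cr}$(v)$ that $r$ carries $w^0$ to $r(w)^0$, one obtains a path
$$
w^0 = v_0 \stackrel{i_1}{\longrightarrow} v_1 \stackrel{i_2}{\longrightarrow} \cdots \stackrel{i_m}{\longrightarrow} v_m = w
$$
in $B(w)$, i.e. with each $v_j = f_{i_j}.v_{j-1}$ defined.

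Then I would run the induction: for each $j=1,\dots,m$, Proposition \ref{prop:red lengths} applied to the pair $(v_{j-1},i_j)$ — legitimate precisely because $f_{i_j}.v_{j-1}$ is defined — gives $\text{red}(v_{j-1}) = \text{red}(v_j)$. Chaining these equalities yields $\text{red}(w^0) = \text{red}(v_0) = \cdots = \text{red}(v_m) = \text{red}(w)$, which is the assertion.

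There is essentially no obstacle beyond Proposition \ref{prop:red lengths} itself, which already carries the substantive content: the compatibility of the relation $\preceq$, hence of the applicability of a $2$-cell at a given position, with the Kashiwara operators (via Corollary \ref{cor:kash+normal}). The only point requiring care is the existence and the downward orientation of the path $w^0 \to \cdots \to w$; if one prefers to avoid invoking the structure theory of crystals, one can instead induct on the length of a sequence of $e_i$'s reducing $w$ to $w^0$ and use the complementary inclusion $\text{red}(f_i.u)\subset\text{red}(e_i.(f_i.u))$ established within the proof of Proposition \ref{prop:red lengths}.
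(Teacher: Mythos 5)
Your proposal is correct and is exactly the argument the paper intends: the corollary is stated without proof as an immediate consequence of Proposition \ref{prop:red lengths}, obtained by iterating $\text{red}(u)=\text{red}(f_i.u)$ along a chain of lowering operators from $w^0$ to $w$. Your added care about the existence of that chain (and the alternative induction on a sequence of $e_i$'s) is sound but not a departure from the paper's route.
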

	
	\subsection{Confluence diagrams of critical branchings}
	Let $t,u,v\in\adm$. For $w=tuv$ we have at most two reduction sequences in $\textbf{ACol}$, i.e.
	$$
	\text{red}(w)=\{a(w)=(1,2,1,2,\dots), b(w)=(2,1,2,1,\dots)\},
	$$
	where $b(w)$ signifies the rightmost reduction strategy, and $a(w)$ signifies the leftmost reduction strategy. Critical branchings in $\mathbf{ACol}$ are of the form
	\begin{equation}\label{eq:critical br2}
	{\xymatrix{ & (t\leftarrow u)v \\ tuv \ar@{=>}[ru]^{\alpha_{tu}v} \ar@{=>}[rd]_{t\alpha_{uv}}\\ & t(u\leftarrow v) }}
	\end{equation}
	
	\noindent with $v\npreceq u\npreceq t$. One has a critical branching with source $w=tuv$ if and only if $a(w)$ and $b(w)$ are non-empty.
	
	\smallskip
	\noindent In what follows, we are interested in characterizing the lengths of $a(w)$ and $b(w)$. By the termination of $\mathbf{ACol}$ we have that $a(w)$ and $b(w)$ are finite, and if we denote their lengths by $|a(w)|$ and $|b(w)|$, we see that they describe the lengths of the leftmost and rightmost reduction strategies. As $\mathbf{ACol}$ is confluent, we have that the normal forms produced by $a(w)$ and $b(w)$ are equal. Hence the lengths of $a(w)$ and $b(w)$ describe the lengths of these two reduction sequences in the confluence diagram of $w$. Hence we make the following definition.
	\begin{defin}
		Let $t,u,v\in\adm$, and $w=tuv$. The confluence pair of $w$ is the pair $(|a(w)|,|b(w)|)$, denoted by $\text{conf}(w)$.
	\end{defin}
	
	\noindent We can summarize the preceding discussion into the following.
	
	\begin{thm}\label{thm:confdiags}
		Let $t,u,v\in\adm$ and $w=tuv$. Then $\text{conf}(w)=\text{conf}(w^0)$.
	\end{thm}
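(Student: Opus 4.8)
The plan is to obtain Theorem~\ref{thm:confdiags} as a length-refinement of the identity $\text{red}(w)=\text{red}(w^0)$, which already follows by iterating Proposition~\ref{prop:red lengths}. Recall that for $w=tuv$ the set $\text{red}(w)$ has at most the two elements $a(w)=(1,2,1,2,\dots)$ and $b(w)=(2,1,2,1,\dots)$, and that a rewriting sequence of \pocol\ following such a strategy stops exactly when the prescribed column pair is already $\preceq$-ordered, which --- because the output of an insertion is always $\preceq$-ordered, so after reducing a left pair one is forced to the right pair and conversely --- happens precisely when a normal form has been reached. Thus $\text{conf}(w)=(|a(w)|,|b(w)|)$ records the lengths of these two \emph{maximal} rewriting sequences, and since $a(w)$ is the unique element of $\text{red}(w)$ beginning with $1$ and $b(w)$ the unique one beginning with $2$, the equality $\text{red}(w)=\text{red}(w^0)$ already matches $a(w)\leftrightarrow a(w^0)$ and $b(w)\leftrightarrow b(w^0)$. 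What remains is to check that this matching preserves the length of the induced maximal rewriting sequence.

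For this I would revisit the proof of Proposition~\ref{prop:red lengths}, now tracking lengths one step at a time. Fix $i$ with $f_i.w$ defined and a single rewriting step $w=c_1\cdots c_k\Longrightarrow c_1\cdots(c_j\leftarrow c_{j+1})\cdots c_k=:w_1$ at position $j$. By Proposition~\ref{prop:perties of new cr} the operator $f_i$ acts on $w$ through a single column $c_m$; if $m\notin\{j,j+1\}$ then $f_i.w$ admits the step at position $j$ with target $f_i.w_1$, while if $m\in\{j,j+1\}$ then $f_i$ acts on the factor $c_jc_{j+1}$ through the product rule, and $f_i.(c_jc_{j+1})$ is not $\preceq$-ordered (otherwise applying $e_i$, using the $e_i$-analogue of Corollary~\ref{cor:kash+normal}, would make $c_jc_{j+1}$ ordered), so it reduces in exactly one step to its normal form, which is $f_i.(c_j\leftarrow c_{j+1})$ because $c_jc_{j+1}\equiv_\epsilon(c_j\leftarrow c_{j+1})$ and $\equiv_\epsilon$ is stable under $f_i$ (Theorem~\ref{prop:crystal in plactic}); again the target is $f_i.w_1$, and the position at which $f_i$ subsequently acts is unchanged since the $\rho_i$-content is a plactic invariant. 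Moreover $w_1$ is a normal form if and only if $f_i.w_1$ is: if $[w_1]=w_1$ then $[f_i.w_1]=f_i.[w_1]=f_i.w_1$ by Corollary~\ref{cor:kash+normal}, and conversely by applying $e_i$. Hence following any strategy $s$ from $w$ and from $f_i.w$ yields two rewriting sequences related step-by-step through $f_i$, of the same length, both maximal.

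It then remains only to induct: choosing Kashiwara operators $e_{i_1},\dots,e_{i_r}$ carrying $w$ down to $w^0$ and applying the previous paragraph $r$ times gives $|a(w)|=|a(w^0)|$ and $|b(w)|=|b(w^0)|$, that is, $\text{conf}(w)=\text{conf}(w^0)$. The only delicate point --- and it is essentially already contained in the proof of Proposition~\ref{prop:red lengths}, now recorded with lengths --- is the middle paragraph: that $f_i$ transports a single rewriting step of \pocol\ to exactly one rewriting step at the same position, and neither creates nor destroys the applicability of a rule. The cleanest device for the latter is the equivalence ``$w_1$ is normal $\iff [w_1]=w_1$'' combined with $[f_i.w_1]=f_i.[w_1]$ from Corollary~\ref{cor:kash+normal}.
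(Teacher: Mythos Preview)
Your argument is correct and follows the same route as the paper, which presents Theorem~\ref{thm:confdiags} simply as a summary of the preceding discussion (Proposition~\ref{prop:red lengths} and its corollary). One remark: the ``length-refinement'' you carry out in the middle paragraph is not actually needed. A reduction strategy in the paper's sense is a specific finite sequence of positions, so the set equality $\text{red}(w)=\text{red}(w^0)$ already forces $a(w)=a(w^0)$ and $b(w)=b(w^0)$ \emph{as sequences} (each being the unique maximal element of the common set beginning with $1$, respectively $2$), whence $|a(w)|=|a(w^0)|$ and $|b(w)|=|b(w^0)|$ immediately. Your step-by-step verification that $f_i$ transports one rewriting step to exactly one rewriting step at the same position with target $f_i.w_1$, and preserves normality, is correct and in fact spells out the inductive mechanism underlying the proof of Proposition~\ref{prop:red lengths}; but for the theorem itself it re-proves rather than refines that proposition.
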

	
	\noindent Thus in order to understand $\text{conf}(w^0)$, it remains to first characterize words of highest weight of length $3$ in $\adm^*$, and then compute $a(w)$ and $b(w)$. We do this in the remainder of the article.
	
	\section{Highest weight words in $\acol^*$}\label{sec:Ctrees}
	\noindent In this section we describe a graphical model, namely the $C-$trees $(\Gamma_C,s)$, to characterize the highest weight words in $\acol^*$. Here $\Gamma_C$ will be a certain tree, and $s$ will be a labeling of its vertices.
	\subsection{Definition of the $C-$tree}
	\begin{defin}\label{def:Ctree} The $C-$tree is the directed graph $\Gamma_C$ with vertex set
		$
		V(\Gamma_C) =\mathbb{N}_{\geq 1} \times \mathbb{N},	
		$
		and with directed edges of the type
		\begin{equation}\tag{E1}\label{eq:E1}
		(i,0) \longrightarrow (i+1,0),
		\end{equation}
		\begin{equation}\tag{E2}\label{eq:E2}
		(i,j)\longrightarrow (i,j+1).
		\end{equation}
	\end{defin}
	
	\noindent We define two maps on the vertices of $\Gamma_C$ to aid us with the notation of the graph.
	
	\begin{itemize}
		\item projection maps $\text{str},\text{lev}:V(\Gamma_C)\longrightarrow \mathbb{N}$, called the \textit{strand} and \textit{level} of a vertex, by setting
		$$
		\text{str}(i,j)=i;\quad \text{lev}(i,j)=i+\left\lfloor \frac{j+1}{2} \right\rfloor
		$$
		
		\item $\text{type}: E(\Gamma_C)\longrightarrow \{1,2\}$ by setting
		$$
		\text{type}(e)= i,
		$$
		if $e$ is an edge as in (Ei), for $i=1,2$.
	\end{itemize}
	We show that $\Gamma_C$ is indeed a tree. We begin with the following lemma.
	\begin{lemm}\label{lem: 1}
		Let $p:v_1\longrightarrow v_2\longrightarrow \cdots \longrightarrow v_n$
		be a path in $\Gamma_C$, and denote the edges in $p$ by $e_i: v_{i}\longrightarrow v_{i+1}$.  Then:
		
		\begin{itemize}
			\item[$a)$] $\text{str}(v_i)\leq \text{str}(v_{i+1})$ for all $1\leq i \leq n$.
			
			\item[$b)$] If for some $1\leq j \leq n$ we have $\text{type}(e_j)=2$, then $\text{type}(e_k)=2$ for all $k\geq j$.
		\end{itemize}
	\end{lemm}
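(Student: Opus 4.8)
The plan is to reduce both statements to a local check on a single edge and then propagate it along the path.

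For part $(a)$, I would observe that along any single edge of $\Gamma_C$ the strand is non-decreasing: an edge of type $\eqref{eq:E1}$ has the shape $(i,0)\to(i+1,0)$, so the strand increases by exactly $1$, while an edge of type $\eqref{eq:E2}$ has the shape $(i,j)\to(i,j+1)$, so the strand is unchanged. Applying this to $e_1,\dots,e_{n-1}$ in turn and chaining the inequalities gives $\text{str}(v_1)\le\text{str}(v_2)\le\cdots\le\text{str}(v_n)$, which is $(a)$.

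For part $(b)$, the key remark is a dichotomy on second coordinates: the target of any type-$2$ edge is a vertex $(i,m)$ with $m\ge 1$ (it is $(i,j+1)$ for some $j\ge 0$), whereas the source of any type-$1$ edge is a vertex $(\ell,0)$, with second coordinate $0$. Consequently no type-$1$ edge can start at a vertex whose second coordinate is positive. Now assume $\text{type}(e_j)=2$. Then $v_{j+1}$ has positive second coordinate, so the next edge $e_{j+1}$, should it exist, cannot be of type $1$, hence $\text{type}(e_{j+1})=2$; its target $v_{j+2}$ again has positive second coordinate, so the same argument applies, and a straightforward induction on $k$ yields $\text{type}(e_k)=2$ for all $k\ge j$. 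This proves $(b)$.

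I do not expect any genuine obstacle here: the statement is just a structural observation about the two edge types, and the only bookkeeping needed is the invariant ``once a type-$2$ edge has been traversed, the second coordinate of the current vertex is $\ge 1$'', which is exactly what drives the induction in $(b)$.
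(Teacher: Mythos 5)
Your proof is correct and follows essentially the same route as the paper: part $(a)$ by the edge-wise observation that the strand is non-decreasing along each edge type, and part $(b)$ by noting that the target of a type-$2$ edge has nonzero second coordinate and hence cannot be the source of a type-$1$ edge, followed by induction. No issues.
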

	\begin{proof} $a)$ is evident by observing in Definition \ref{def:Ctree} that if $(i,j)\longrightarrow (i',j')$ is an edge in $\Gamma_C$, then $i\leq i'$.
		
		\smallskip
		\noindent $b)$ If $\text{type}(e_j)=2$, the target of the edge $e_j$ is a vertex $(i,j)$ with $j\neq 0$. As this vertex is the source of the edge $e_{j+1}$, we have that indeed $\text{type}(e_{j+1})=2$. The result then follows in full by induction.	
		
	\end{proof}
	
	\begin{prop}
		The C-tree is indeed a tree, with root $(1,0)$.
	\end{prop}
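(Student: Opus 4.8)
The plan is to show that $\Gamma_C$ is a rooted tree with root $(1,0)$ by verifying the two defining properties of an arborescence: $(1,0)$ has no incoming edge, every other vertex has exactly one incoming edge, and every vertex is reachable from $(1,0)$ by a directed path; equivalently, exhibiting for each vertex a \emph{unique} directed path from $(1,0)$ to it. Existence of such a path forces connectedness, uniqueness forces acyclicity, and together with the in-degree count this is exactly what "tree with root $(1,0)$" means here.

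First I would read off the incoming edges directly from Definition \ref{def:Ctree}. The only edge of either type with target $(i,0)$ is the type-\eqref{eq:E1} edge $(i-1,0)\longrightarrow(i,0)$, which exists precisely when $i\geq 2$; and the only edge with target $(i,j)$ for $j\geq 1$ is the type-\eqref{eq:E2} edge $(i,j-1)\longrightarrow(i,j)$. Hence $(1,0)$ has no incoming edge while every other vertex $v$ has a unique incoming edge (a unique "parent"). For reachability, the concatenation $(1,0)\longrightarrow(2,0)\longrightarrow\cdots\longrightarrow(i,0)\longrightarrow(i,1)\longrightarrow\cdots\longrightarrow(i,j)$, made of $i-1$ edges of type \eqref{eq:E1} followed by $j$ edges of type \eqref{eq:E2}, is a directed path from the root to $(i,j)$; so $\Gamma_C$ is connected.

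For uniqueness I would take an arbitrary directed path $p\colon (1,0)=v_0\longrightarrow v_1\longrightarrow\cdots\longrightarrow v_m=(i,j)$ with edges $e_t\colon v_{t-1}\longrightarrow v_t$, and apply Lemma \ref{lem: 1}$(b)$: the type-$2$ edges occupy a terminal segment of $p$, so there is an index $k$ with $\mathrm{type}(e_t)=1$ for $t\leq k$ and $\mathrm{type}(e_t)=2$ for $t>k$. Starting from $v_0=(1,0)$, each type-\eqref{eq:E1} edge keeps the second coordinate $0$ and raises the strand by one, so $v_t=(t+1,0)$ for $t\leq k$ and in particular $v_k=(k+1,0)$; then each type-\eqref{eq:E2} edge fixes the strand and raises the second coordinate by one, so $v_{k+t}=(k+1,t)$ for $0\leq t\leq m-k$. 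Comparing $v_m=(i,j)$ gives $k+1=i$ and $m-k=j$, hence $m=i+j-1$ and $p$ coincides with the path exhibited above. This proves uniqueness, and the proposition follows.

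The argument is essentially routine once Lemma \ref{lem: 1} is available; the one place that genuinely needs the lemma, rather than just bookkeeping, is the observation that a path cannot interleave the two edge types — part $(b)$ of the lemma is exactly what rules this out and lets me read the vertices of the path off its edge-types. (Alternatively one could avoid appealing to the "terminal segment" structure and instead argue by strong induction on the quantity $i+j$, walking backwards along the unique parent edge, which strictly decreases $i+j$ until $(1,0)$ is reached; but the route through Lemma \ref{lem: 1} is cleaner and reuses what has just been established.)
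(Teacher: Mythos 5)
Your proof is correct and follows essentially the same route as the paper: exhibit the canonical path of type-(E1) edges followed by type-(E2) edges, then use Lemma \ref{lem: 1}$(b)$ to show any directed path from $(1,0)$ to $(i,j)$ must have this shape, and read off the vertices to conclude uniqueness. The only cosmetic difference is that you handle the two cases $(i,0)$ and $(i,j)$ with $j>0$ uniformly via the prefix/suffix decomposition (plus the in-degree remark), whereas the paper treats $(i,0)$ first and then locates the first vertex of strand $i$; the substance is identical.
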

	\begin{proof}
		We need to show that given any vertex $v=(i,j)$ of $\Gamma_C$, there exists a unique directed path from $(1,0)$ to $v$.
		
		\smallskip
		\noindent Note that using edges of type 1, for any $i\in\mathbb{N}$ we have a path
		$$
		p_i: (1,0)\longrightarrow (2,0)\longrightarrow \cdots \longrightarrow (i,0).
		$$
		If $p:(1,0)=v_1\stackrel{e_1}{\longrightarrow} \cdots \stackrel{e_{n-1}}{\longrightarrow} v_n=(i,0)$ is any path in $\Gamma_C$, we see that $\text{type}(e_{n-1})=1$, as its target is of the form $(i,0)$. From Lemma \ref{lem: 1} part $b)$ we get that $\text{type}(e_i)=1$ for all $i=1,2,\cdots,n-1$. As for $(1,0)$, there exists a unique edge of type $1$ with $(1,0)$ as source, we get that $e_1:(1,0)\longrightarrow (2,0)$. Inductively we  obtain that
		$
		n=i$ and $e_j:(j,0)\longrightarrow(j+1,0)$
		for $1\leq j \leq i-1$. Thus $p=p_i$, hence $p_i$ is the unique path from $(1,0)$ to $(i,0)$.
		
		\smallskip
		\noindent 	Let now $(i,j)\in V(\Gamma_C)$ with $j\neq 0$. We note that
		$$
		p_{(i,j)}=p_i\ast ((i,0)\longrightarrow (i,1)\longrightarrow \cdots \longrightarrow (i,j))
		$$
		is a path from $(1,0)$ to $(i,j)$ in $\Gamma_C$. To show that this path is unique, pick a path $p:(1,0)=v_1\longrightarrow \cdots \longrightarrow v_n=(i,j)$. Clearly we have $\text{str}(v_1)=1$ and $\text{str}(v_n)=i$. Let $1\leq l \leq n$ be minimal with the property $\text{str}(v_l)=i$. We have that $e_l: v_{l-1}\longrightarrow v_l$, with $\text{str}(v_{l-1})\leq i-1$. Evidently then we have $\text{type}(e_l)=1$, and we get that $v_{l}=(i,0)$. This means that the subpath $(1,0)\longrightarrow \cdots \longrightarrow (i,0)=v_l$ is the unique path $p_i$ from $(1,0)$ to $(i,0)$. In particular we get $l=i-1$. Now as $\text{str}(v_k)=i$ for $k\geq i$, we see that $\text{type}(e_k)=2$ for $k > i$. This forces	$e_{i}:(i,0)\longrightarrow (i,1)$, and inductively we obtain
		$$
		e_k: (i,k-i-1)\longrightarrow	 (i,k-i),\quad \text{for } k>l
		$$
		which gives us $p=p_{(i,j)}$. Thus indeed there exists a unique path from $(1,0)$ to any vertex of $\Gamma_C$, and this concludes the proof of the proposition.
	\end{proof}
	
	\noindent To graphically present the $C-$tree, we make the following notational conventions
	\begin{itemize}
		\item We identify the set of vertices $\Gamma_C$ via the following map\\
		$
		\mathbb{N}_{\geq 1}\times \mathbb{N} \longrightarrow \mathbb{N}_{\geq 1}\cup \{in^\pm \ |\ i\in\mathbb{N}, n\geq 1\}
		$ given by
		$$
		\begin{array}{rcl}
		(i,0) & \longmapsto & i,\\
		(i,2n-1) & \longmapsto & in^+(=in), \\
		(i,2n) & \longmapsto & in^-.
		\end{array}
		$$
	\end{itemize}
	Then the $C-$tree can be graphically presented as follows
	
	\begin{center}
		\begin{tikzpicture}
		\draw[-] (-0.25,0)--(-1,-1);
		\draw[-] (-1,-1)--(-0.25,-1);
		\draw[-] (-0.25,-1)--(-1,-2);
		\draw[-] (-1,-2)--(-0.25,-2);
		\draw[-] (-0.25,-2)--(-1,-3);
		\draw[-] (-1,-3)--(-0.25,-3);
		\draw[-] (-0.25,-3)--(-1,-4);
		\draw[-] (-1,-4)--(-0.25,-4);
		
		\draw[-] (-0.25,0)--(1,-1);
		
		\draw[-] (1,-1)--(0.25,-2);
		\draw[-] (0.25,-2)--(1,-2);
		\draw[-] (1,-2)--(0.25,-3);
		\draw[-] (0.25,-3)--(1,-3);
		\draw[-] (1,-3)--(0.25,-4);
		\draw[-] (0.25,-4)--(1,-4);
		
		\draw[-] (1,-1)--(2.25,-2);
		\draw[-] (2.25,-2)--(1.5,-3);
		\draw[-] (1.5,-3)--(2.25,-3);
		\draw[-] (2.25,-3)--(1.5,-4);
		\draw[-] (1.5,-4)--(2.25,-4);
		
		\draw[-] (2.25,-2)--(3.5,-3);
		
		\draw[-] (3.5,-3)--(2.75,-4);
		\draw[-] (2.75,-4)--(3.5,-4);
		
		\draw[-] (3.5,-3)--(4.75,-4);
		
		\node at (4.75,-4) {$\cdot$};
		\node at (-0.25,0) {$\cdot$};
		
		\draw[dashed] (-0.25,-4)--(-1,-5);
		\draw[dashed] (1,-4)--(0.25,-5);
		\draw[dashed] (2.25,-4)--(1.5,-5);
		\draw[dashed] (3.5,-4)--(2.75,-5);
		\draw[dashed] (4.75,-4)--(6,-5);
		
		\node at (-0.25,0) {$\bullet$}; \node[above] at (-0.25,0) {\small{$1$}};
		\node at (-1,-1) {$\bullet$}; \node[left] at (-1,-1) {\small{$11$}};
		\node at (-0.25,-1) {$\bullet$}; \node[above] at (-0.25,-1) {\small{$11^-$}};
		\node at (-1,-2) {$\bullet$}; \node[left] at (-1,-2) {\small{$12$}};
		\node at (-0.25,-2) {$\bullet$}; \node[above] at (-0.25,-2) {\small{$12^-$}};
		\node at (-1,-3) {$\bullet$}; \node[left] at (-1,-3) {\small{$13$}};
		\node at (-0.25,-3) {$\bullet$}; \node[above] at (-0.25,-3) {\small{$13^-$}};
		\node at (-1,-4) {$\bullet$};\node[left] at (-1,-4) {\small{$14$}};
		\node at (-0.25,-4) {$\bullet$}; \node[above] at (-0.25,-4) {\small{$14^-$}};

		\node at (1,-1) {$\bullet$}; \node[right] at (1,-1) {\small{$2$}};
		\node at (0.25,-2) {$\bullet$}; \node[below] at (0.25,-2) {\small{$21$}};
		\node at (1,-2) {$\bullet$}; \node[above] at (1,-2) {\small{$21^-$}};
		\node at (0.25,-3) {$\bullet$}; \node[below] at (0.25,-3) {\small{$22$}};
		\node at (0.25,-4) {$\bullet$}; \node[below] at (0.25,-4) {\small{$23$}};
		\node at (1,-4) {$\bullet$}; \node[above] at (1,-4) {\small{$23^-$}};
		\node at (1,-3) {$\bullet$}; \node[above] at (1,-3) {\small{$22^-$}};

		\node at (2.25,-2) {$\bullet$}; \node[right] at (2.25,-2) {\small{$3$}};
		\node at (1.5,-3) {$\bullet$}; \node[below] at (1.5,-3) {\small{$31$}};
		\node at (2.25,-3) {$\bullet$}; \node[above] at (2.25,-3) {\small{$31^-$}};
		\node at (1.5,-4) {$\bullet$}; \node[below] at (1.5,-4) {\small{$32$}};
		\node at (2.25,-4) {$\bullet$}; \node[above] at (2.25,-4) {\small{$32^-$}};
		
		\node at (3.5,-3) {$\bullet$}; \node[right] at (3.5,-3) {\small{$4$}};
		\node at (2.75,-4) {$\bullet$}; \node[below] at (2.75,-4) {\small{$41$}};
		\node at (3.5,-4) {$\bullet$}; \node[above] at (3.5,-4) {\small{$41^-$}};
		\node at (4.75,-4) {$\bullet$}; \node[right] at (4.75,-4) {\small{$5$}};
		
		
		\end{tikzpicture}
	\end{center}
	Via this identification, we allow for the notation
	$$
	\text{lev}(ij)=\text{lev}(ij^-)=i+j.
	$$
	We now define a few relevant subgraphs and a partial order on the $C-$tree.
	
	\begin{itemize}
		\item[1.] We refer to vertices of the form $in$ as \textit{outer vertices}, and to vertices of the form $in^-$ as \textit{inner vertices}.
		\item[2.] Given $i\in\mathbb{N}$, the full subgraph of $\Gamma_C$ on the vertices $\{v\in \Gamma_C\ |\ \text{str}(v)=i\}$ is called the $i-$th \textit{strand} of $\Gamma_C$, and is denoted by $\Gamma^i_C$. Clearly $\Gamma_C^i$ is a tree.
		\item[3.] Given $k\in\mathbb{N}$, the full subgraph of $\Gamma_C$ on the vertices $\{v\in\Gamma_C\ |\ \text{lev}(v)= k\}$ is called the $k-$th \textit{level} of $\Gamma_C$, and is denoted by $\Gamma_{C,k}$. Clearly $\Gamma_{C,k}$ is not connected for $k\neq 0$.
		
		\item[4.] We are also particularly interested in the $k-$th \emph{truncations} of the tree, namely the full subtrees of $\Gamma_C$ on the vertices
		$
		\{ v\in\Gamma_C \ |\ \text{lev}(v)\leq k \}.
		$ We denote it by $\Gamma_C(k)$.
		\item[5.] \noindent It is useful to consider the partial order on $V(\Gamma_C)$ arising out of the directed graph structure of $\Gamma_C$. More precisely, for $v,w\in V(\Gamma_C)$, we say $v\leq w$ if there exists a path $p:v=v_1\rightarrow v_1\rightarrow \cdots \rightarrow v_k=w$ in $\Gamma_C$.
	\end{itemize} 
	
	\subsection{Valuations, and readings of $C-$trees}\label{sub:ReadingsofC}
	
	Here we endow the $C-$tree with a vertex labeling from $\mathbb{N}$.
	\subsubsection{Labelings and valuations of $C-$trees}
	\begin{defin}
		A labeling of rank $k\in\mathbb{N}$ of $\Gamma_C$ is a map
		$
		s:V(\Gamma_C(k))\longrightarrow \mathbb{N}.
		$
		The pair $(\Gamma_C,s)$ is called an $\mathbb{N}-$labeled $C-$tree of rank $k$.
	\end{defin}
	\noindent We illustrate a valuated $C-$tree with its labels marked down at the corresponding vertex, and often times we neglect noting its rank $k$, which is implicitly given by $s$.
	
	\smallskip
	\noindent For a labeled $C-$tree $(\Gamma_C,s)$ of rank $k$ we define a new map $q:V(\Gamma_C(k))\longrightarrow \mathbb{Z}$ on the vertices of $\Gamma_C(k)$, called the \textit{valuation} of $(\Gamma_C,s)$. For $v\in V(\Gamma_C(k))$ with $\text{str}(v)=i$ we set
	$$
	q(v)=\sum_{\tiny{\begin{array}{l}
			ij\in\Gamma_C^i\\
			ij \leq v
			\end{array}}}s(ij)\ -\sum_{\tiny{\begin{array}{l}
			ij^-\in\Gamma_C^i\\
			ij^- \leq v
			\end{array}}}s(ij^-).
	$$
	
	\noindent For a natural number $n$ and a labeling $s$ of rank $k$ of $\Gamma_C$, we say that $s$ is an $n-$\textit{labeling} if $
	0\leq q(v)\leq n$
	for all $v\in V(\Gamma_C(k))$.
	
	\begin{exo} For $k=1,2,3$, we illustrate the labeled $C-$trees of rank $k$ as follows.
		$$
		\begin{tikzpicture}
		
		\node at (-4,-1) {\small{$\bullet$}};
		\node[above] at (-4,-1) {\tiny{$s(1)$}};

		\draw[-] (0,-0.5)--(-.75,-1.5);
		\draw[-] (0,-.50)--(1.25,-1.5);
		\draw[-] (-.75,-1.5)--(0,-1.5);
		
		\node at (0,-.5) {\small{$\bullet$}};
		\node[above] at (0,-.5) {\tiny{$s(1)$}};
		
		\node at (1.25,-1.5) {\small{$\bullet$}};
		\node[right] at (1.25,-1.5) {\tiny{$s(2)$}};
		
		\node at (-.75,-1.5) {\small{$\bullet$}};
		\node[left] at (-.75,-1.5) {\tiny{$s(11)$}};
		
		\node at (0,-1.5) {\small{$\bullet$}};
		\node[below] at (0,-1.5) {\tiny{$s(11^-)$}};

		\draw[-] (4,0)--(3.25,-1);
		\draw[-] (3.25,-1)--(4,-1);
		\draw[-] (4,-1)--(3.25,-2);
		\draw[-] (3.25,-2)--(4,-2);
		
		\draw[-] (4,0)--(5.25,-1);
		\draw[-] (5.25,-1)--(6.5,-2);
		
		\draw[-] (5.25,-1)--(4.5,-2);
		\draw[-] (4.5,-2)--(5.25,-2);
		
		\node at (4,0) {\small{$\bullet$}};
		\node[above] at (4,0) {\tiny{$s(1)$}};
		
		\node at (5.25,-1) {\small{$\bullet$}};
		\node[right] at (5.25,-1) {\tiny{$s(2)$}};
		
		\node at (3.25,-1) {\small{$\bullet$}};
		\node[left] at (3.25,-1) {\tiny{$s(11)$}};
		
		\node at (4,-1) {\small{$\bullet$}};
		\node[above] at (4,-1) {\tiny{$s(11^-)$}};
		
		\node at (6.5,-2) {\small{$\bullet$}};
		\node[right] at (6.5,-2) {\tiny{$s(3)$}};
		
		\node at (4.5,-2) {\small{$\bullet$}};
		\node[below] at (4.5,-2) {\tiny{$s(21)$}};
		
		\node at (3.25,-2) {\small{$\bullet$}};
		\node[left] at (3.25,-2) {\tiny{$s(12)$}};
		
		\node at (4,-2) {\small{$\bullet$}};
		\node[above] at (4,-2) {\tiny{$s(12^-)$}};

		\node at (5.25,-2) {\small{$\bullet$}};
		\node[above] at (5.25,-2) {\tiny{$s(21^-)$}};

		\end{tikzpicture}.
		$$
	\end{exo}
	
	\subsubsection{Reading of labeled $C-$trees}
	\noindent Let $(\Gamma_C,s)$ be a $n-$labeled $C-$tree of rank $k$. We define a map $\rho:V(\Gamma_C(k))\longrightarrow C_n^*$ by setting
	$$
	\begin{array}{lcl}
	\rho(i) & = & \mathfrak{c}(s(i)),\\
	\rho(ij) & = & \mathfrak{c}(q(i(j-1)^-),s(ij)),\\
	\rho(ij^{-}) & = & \mathfrak{c}(\overline{q(ij)},s(ij^-)).
	\end{array}
	$$
	
	\smallskip
	\noindent Since $s$ is an $n-$labeling on $\Gamma_C$, we see that $\rho(v)$ is well defined for all vertices $v$ of $\Gamma_C(k)$.
	
	\noindent We define the \textit{reading of the} $t-$\textit{th level of} $(\Gamma_C,s)$ by setting
	
	\begin{equation}\label{eq:reading of cols}
	\omega_t(\Gamma_C,s):= \prod_{l=0}^{t-1} \rho((t-l)l)\prod_{l=1}^{t-1} \rho(l(t-l)^-)
	\end{equation}
	where the product runs over those vertices $v$ with $s(v)\neq 0$. If $s(v)=0$ for all $v$ with $\text{lev}(v)=t$, we set $\omega_t(\Gamma_C,s)=\epsilon$.
	We note right away that $\rho(v)$ are blocks for all $v\in V(\Gamma_C(k))$, and $\omega_t(\Gamma_C,s)$ are products of blocks.

	\begin{defin}\label{def:wordofC}
		Let $(\Gamma_C,s)$ be a $n-$labeled $C-$tree of rank $k$. The reading of, or word of $(\Gamma_C,s)$ is
		$$
		\omega(\Gamma_C,s):=\prod_{t=1}^k \omega_t(\Gamma_C,s)
		$$
		with $\omega_t(\Gamma_C,s)$ as in \eqref{eq:reading of cols}.
	\end{defin}
	
	\noindent Thus we have defined a map
	$
	\omega:\{n-\text{labeled $C$-trees of rank $k$}\}_{k\in\mathbb{N}} \longrightarrow C_n^*.
	$
	While the notation for constructing the word of an $n-$labeled $C-$tree is cumbersome, we can summarize the reading map as follows.
	\begin{itemize}
		\item[1.] Construct a column $\rho(v)$ for each vertex of $V(\Gamma_C(k))$ as follows. If $v=i0$ for some $i$, then $\rho(v)=\cons(s(v))$. Otherwise denote by $v'$ the maximal vertex such that $v'<v$. If $v$ itself is an outer vertex, then $\rho(v)=\cons(q(v'),s(v))$. If $v$ is an inner vertex, then $\rho(v)=\cons(\overline{q(v')},s(v))$.
		\item[2.] For $t\leq k$, the reading of the $t-$th level of $\Gamma_C$ is the product of $\rho(v)$ for $v\in\Gamma_{C,t}$, with the outer vertices read from right to left first, and then the inner vertices read from left to right.
		\item[3.] The word of $(\Gamma_C,s)$ is the product of the readings of its levels.
	\end{itemize}
	
	\begin{exo}\label{ex:a Ctree}
		Let $n=4$ and $T$ the following $\mathbb{N}-$labeled $C-$tree of rank $3$ 
		$$
		\begin{tikzpicture}
		\draw[-] (-0.25,0)--(-1,-1);
		\draw[-] (-1,-1)--(-0.25,-1);
		\draw[-] (-0.25,-1)--(-1,-2);
		\draw[-] (-1,-2)--(-0.25,-2);
		
		\draw[-] (-0.25,0)--(1,-1);
		\draw[-] (1,-1)--(2.25,-2);
		
		\draw[-] (1,-1)--(0.25,-2);
		\draw[-] (0.25,-2)--(1,-2);
		
		\node at (-.25,0) {\small{$\bullet$}};
		\node[above] at (-.25,0) {\small{$3$}};
		
		\node at (1,-1) {\small{$\bullet$}};
		\node[right] at (1,-1) {\small{$2$}};
		
		\node at (-1,-1) {\small{$\bullet$}};
		\node[left] at (-1,-1) {\small{$1$}};
		
		\node at (-.25,-1) {\small{$\bullet$}};
		\node[right] at (-.25,-1) {\small{$2$}};
		
		\node at (2.25,-2) {\small{$\bullet$}};
		\node[right] at (2.25,-2) {\small{$1$}};
		
		\node at (.25,-2) {\small{$\bullet$}};
		\node[below] at (.25,-2) {\small{$1$}};
		
		\node at (-1,-2) {\small{$\bullet$}};
		\node[left] at (-1,-2) {\small{$2$}};
		
		\node at (-.25,-2) {\small{$\bullet$}};
		\node[below] at (-.25,-2) {\small{$1$}};

		\node at (1,-2) {\small{$\bullet$}};
		\node[below] at (1,-2) {\small{$2$}};
		
		\node at (-2,-1) {\small{$T =$}};
		\end{tikzpicture}.
		$$
		For the $1$st level we have $\rho(1)=\cons(3)=123$ hence $\omega_1=123$. For the $2$nd level we have $\rho(2)=\cons(2)=12$, $\rho(11)=\cons(3,1)=4$, and $\rho(11^-)=\cons(\overline{4},2)=\overline{4}\overline{3}$, hence $\omega_2=124\overline{4}\overline{3}$. For the $3$rd level we have $\rho(3)=\cons(1)=1$, $\rho(21)=\cons(2,1)=3$, $\rho(12)=\cons(2,2)=34$, $\rho(12^-)=\cons(\overline{4},1)=\overline{4}$, and $\rho(21^-)=\cons(\overline{3},2)=\overline{3}\overline{2}$, hence $\omega_3=1334\overline{4}\overline{3}\overline{2}$. Thus we finally obtain
		$$
		\omega(T)=\omega_1\omega_2\omega_3=123124\overline{4}\overline{3}1334\overline{4}\overline{3}\overline{2}.
		$$
	\end{exo}
	
	\noindent Note that in Example \ref{ex:a Ctree} $\omega_1$ is an admissible column, $\omega_2$ is a non-admissible column, and $\omega_3$ is not a column. In what follows, we specify the conditions on the labeling of a $C-$tree $T$ such that $\omega_t(T)$ are all admissible columns. In particular, we illustrate these conditions graphically.
	\subsubsection{Column conditions}
	In this section, we will investigate under what conditions are the readings $\omega_t=\omega_t(\Gamma_C,s)$ of the $t-$th levels admissible columns. Let $(\Gamma_C,s)$ be an $n-$labeled $C-$tree of rank $k$, and $t\leq k$.
	
	\smallskip
	\noindent Denote by $a_l^0$ and $a_l^1$ respectively the leftmost and rightmost elements of the column $\rho((t-l)l)$, and by $b_l^0$ and $b_l^1$ respectively the leftmost and rightmost elements of the column $\rho(l(t-l)^-)$. Then $\omega_t$ is a column if and only if
	\begin{equation}\label{eq:ineq}
	a_l^1< a_{l+1}^0,\quad b_{l}^1< b_{l+1}^0.
	\end{equation}

	\noindent Note that $a_l^0 = q((t-l)(l-1)^-)+1$, $a_l^1 = q((t-l)(l-1)^-)+s((t-l)l)=q((t-l)l)$, $b_l^0 = \overline{q(l(t-l))}$, and $b_l^1 = \overline{q(l(t-l)^-)+1}$.
	Thus the inequalities \eqref{eq:ineq} become
	\begin{equation}\tag{\ref{eq:ineq}$'$}\label{eq:ineq1}
	q((t-l)l) \leq q((t-l-1)(l+1)^-)
	\end{equation}
	and
	\begin{equation}\tag{\ref{eq:ineq}$''$}\label{eq:ineq2}
	q(l(t-l)) \leq q((l+1)(t-l-1)^-).
	\end{equation}
	which we can summarize into
	\begin{equation}\tag{\ref{eq:ineq}$'''$}\label{eq:ineq3}
	q(ij) \leq q((i-1)(j+1)^-)\ \ \text{and}\ \ q((i-1)j^-).
	\end{equation}
	
	\noindent This way we have obtained the following
	\begin{prop}(Column conditions)
		Let $(\Gamma_C,s)$ be an $n-$labeled $C-$tree of rank $k$. The readings $\omega_t=\omega_t(\Gamma_C,s)$ of the $t-$levels are columns if and only if  \eqref{eq:ineq3} are satisfied for all $v=ij\in\Gamma_C(k)$.
	\end{prop}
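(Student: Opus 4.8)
The plan is to reduce the statement to the elementary observation that a concatenation $C_1C_2\cdots C_m$ of columns is itself a column exactly when, for every $r$, the largest letter of $C_r$ lies strictly below — in the total order of $C_n$ — the smallest letter of $C_{r+1}$, and then to read off these junction inequalities from the explicit formulas for $\rho$ recorded just above the statement.

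First I would note that each $\rho(v)$ is a block, hence a strictly increasing word, i.e.\ a column, as already remarked after Definition \ref{def:wordofC}; so within each block nothing can go wrong, and $\omega_t$ is a column iff every junction between consecutive nonempty blocks is strictly increasing. By construction $\omega_t$ is, from left to right, the concatenation of the unbarred blocks $\rho((t-l)l)$, $l=0,\dots,t-1$, followed by the barred blocks $\rho(l(t-l)^-)$, $l=1,\dots,t-1$, with any block for which $s(v)=0$ deleted. Since in $C_n$ every unbarred letter precedes every barred one, the junction where the unbarred run meets the barred run is automatically increasing; likewise a level contributing a single nonempty block, or no block at all, is automatically a column. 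Hence $\omega_t$ is a column precisely when (i) $a_l^1<a_{l+1}^0$ at every junction between consecutive nonempty unbarred blocks, and (ii) $b_l^1<b_{l+1}^0$ at every junction between consecutive nonempty barred blocks.

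It then remains to unwind (i) and (ii) using the values of $a_l^0,a_l^1,b_l^0,b_l^1$ already computed. For (i) one substitutes $a_l^1=q((t-l)l)$ and the $a^0$ of the following block and obtains \eqref{eq:ineq1}; for (ii) one must keep in mind that barring reverses the order on $C_n$, so that the \emph{largest} element of $\rho(l(t-l)^-)$ is $b_l^1=\overline{q(l(t-l)^-)+1}$ while the \emph{smallest} element of the next inner block is $\overline{q((l+1)(t-l-1))}$, and the inequality $b_l^1<b_{l+1}^0$ unwinds to \eqref{eq:ineq2}. Finally \eqref{eq:ineq1} and \eqref{eq:ineq2} are exactly the two instances at a vertex $v=ij$ of the single condition \eqref{eq:ineq3}, so ``all $\omega_t$ are columns'' is equivalent to ``\eqref{eq:ineq3} holds at every $v=ij\in\Gamma_C(k)$''.

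The step I expect to cost the most care is the bookkeeping forced by zero labels: since a vertex with $s(v)=0$ produces an empty block that is dropped from the product, the junction conditions a priori only relate consecutive \emph{nonempty} blocks, whereas \eqref{eq:ineq3} is demanded at every vertex. The point is that a vertex with label $0$ leaves the valuation $q$ unchanged, so along a run of empty blocks the corresponding instances of \eqref{eq:ineq3} telescope: their conjunction is equivalent to the one junction inequality between the two nonempty blocks bounding the run, which is what lets one move freely between the two formulations. I would also check the boundary cases — the first block $\rho(t)$ of a level (where the relevant predecessor is the strand root $(t,0)$, with the convention $q((t)0^-):=q(t)$), a strand with $i=1$ (where the vertex $(i-1,\cdot)$ appearing in \eqref{eq:ineq3} does not exist and the condition is vacuous, matching the absence of a junction there), and levels contributing only unbarred or only barred blocks — but each of these is immediate from the definitions.
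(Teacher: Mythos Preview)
Your argument is essentially the same as the paper's: the paper's ``proof'' is the short derivation immediately preceding the proposition, which computes the extreme letters $a_l^0,a_l^1,b_l^0,b_l^1$ of each block and rewrites the junction inequalities $a_l^1<a_{l+1}^0$, $b_l^1<b_{l+1}^0$ as \eqref{eq:ineq1}, \eqref{eq:ineq2}, then packages them as \eqref{eq:ineq3}. You do exactly this, but you are more explicit about two points the paper leaves implicit --- that the unbarred-to-barred transition is automatically increasing, and that vertices with $s(v)=0$ contribute empty blocks whose instances of \eqref{eq:ineq3} telescope into the junction condition between the surrounding nonempty blocks --- so your write-up is, if anything, more complete than what appears in the text.
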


	\noindent This notation is quite cumbersome. However these inequalities are easy to illustrate on the $C-$tree itself. More specifically, we add a \textit{red edge} $e:u \longrightarrow v$ to the $C-$tree to signify $q(u)\leq q(v)$.
	With these new edges, the labeled $C-$tree is illustrated as follows
	
	\begin{center}
		\begin{tikzpicture}
		\draw[-] (-0.25,0)--(-1,-1);
		\draw[-] (-1,-1)--(-0.25,-1);
		\draw[-] (-0.25,-1)--(-1,-2);
		\draw[-] (-1,-2)--(-0.25,-2);
		\draw[-] (-0.25,-2)--(-1,-3);
		\draw[-] (-1,-3)--(-0.25,-3);
		\draw[-] (-0.25,-3)--(-1,-4);
		\draw[-] (-1,-4)--(-0.25,-4);
		
		\draw[-] (-0.25,0)--(1,-1);
		
		\draw[-] (1,-1)--(0.25,-2);
		\draw[-] (0.25,-2)--(1,-2);
		\draw[-] (1,-2)--(0.25,-3);
		\draw[-] (0.25,-3)--(1,-3);
		\draw[-] (1,-3)--(0.25,-4);
		\draw[-] (0.25,-4)--(1,-4);
		
		\draw[-] (1,-1)--(2.25,-2);
		\draw[-] (2.25,-2)--(1.5,-3);
		\draw[-] (1.5,-3)--(2.25,-3);
		\draw[-] (2.25,-3)--(1.5,-4);
		\draw[-] (1.5,-4)--(2.25,-4);
		
		\draw[-] (2.25,-2)--(3.5,-3);
		
		\draw[-] (3.5,-3)--(2.75,-4);
		\draw[-] (2.75,-4)--(3.5,-4);
		
		\draw[-] (3.5,-3)--(4.75,-4);
		
		\node at (4.75,-4) {$\cdot$};
		\node at (-0.25,0) {$\cdot$};
		
		\draw[dashed] (-0.25,-4)--(-1,-5);
		\draw[dashed] (1,-4)--(0.25,-5);
		\draw[dashed] (2.25,-4)--(1.5,-5);
		\draw[dashed] (3.5,-4)--(2.75,-5);
		\draw[dashed] (4.75,-4)--(6,-5);
		
		\draw[->,red,thick] (0.25,-2) --(-0.25,-1);
		\draw[->,red,thick] (0.25,-2) --(-0.25,-2);
		\draw[->,red,thick] (0.25,-3) --(-0.25,-2);
		\draw[->,red,thick] (0.25,-3) --(-0.25,-3);
		\draw[->,red,thick] (0.25,-4) --(-0.25,-3);
		\draw[->,red,thick] (0.25,-4) --(-0.25,-4);
		
		\draw[->,red,thick] (1.5,-3) --(1,-2);
		\draw[->,red,thick] (1.5,-3) --(1,-3);
		\draw[->,red,thick] (1.5,-4) --(1,-3);
		\draw[->,red,thick] (1.5,-4) --(1,-4);
		
		\draw[->,red,thick] (2.75,-4) --(2.25,-3);
		\draw[->,red,thick] (2.75,-4) --(2.25,-4);

		\node at (-0.25,0) {$\bullet$}; \node[above] at (-0.25,0) {\small{$1$}};
		\node at (-1,-1) {$\bullet$}; \node[left] at (-1,-1) {\small{$11$}};
		\node at (-0.25,-1) {$\bullet$}; \node[above] at (-0.25,-1) {\small{$11^-$}};
		\node at (-1,-2) {$\bullet$}; \node[left] at (-1,-2) {\small{$12$}};
		\node at (-0.25,-2) {$\bullet$}; \node[above] at (-0.25,-2) {\small{$12^-$}};
		\node at (-1,-3) {$\bullet$}; \node[left] at (-1,-3) {\small{$13$}};
		\node at (-0.25,-3) {$\bullet$}; \node[above] at (-0.25,-3) {\small{$13^-$}};
		\node at (-1,-4) {$\bullet$};\node[left] at (-1,-4) {\small{$14$}};
		\node at (-0.25,-4) {$\bullet$}; \node[above] at (-0.25,-4) {\small{$14^-$}};

		\node at (1,-1) {$\bullet$}; \node[right] at (1,-1) {\small{$2$}};
		\node at (0.25,-2) {$\bullet$}; \node[below] at (0.25,-2) {\small{$21$}};
		\node at (1,-2) {$\bullet$}; \node[above] at (1,-2) {\small{$21^-$}};
		\node at (0.25,-3) {$\bullet$}; \node[below] at (0.25,-3) {\small{$22$}};
		\node at (0.25,-4) {$\bullet$}; \node[below] at (0.25,-4) {\small{$23$}};
		\node at (1,-4) {$\bullet$}; \node[above] at (1,-4) {\small{$23^-$}};
		\node at (1,-3) {$\bullet$}; \node[above] at (1,-3) {\small{$22^-$}};

		\node at (2.25,-2) {$\bullet$}; \node[right] at (2.25,-2) {\small{$3$}};
		\node at (1.5,-3) {$\bullet$}; \node[below] at (1.5,-3) {\small{$31$}};
		\node at (2.25,-3) {$\bullet$}; \node[above] at (2.25,-3) {\small{$31^-$}};
		\node at (1.5,-4) {$\bullet$}; \node[below] at (1.5,-4) {\small{$32$}};
		\node at (2.25,-4) {$\bullet$}; \node[above] at (2.25,-4) {\small{$32^-$}};
		
		\node at (3.5,-3) {$\bullet$}; \node[right] at (3.5,-3) {\small{$4$}};
		\node at (2.75,-4) {$\bullet$}; \node[below] at (2.75,-4) {\small{$41$}};
		\node at (3.5,-4) {$\bullet$}; \node[above] at (3.5,-4) {\small{$41^-$}};
		\node at (4.75,-4) {$\bullet$}; \node[right] at (4.75,-4) {\small{$5$}};
		
		
		\end{tikzpicture}
	\end{center}
	
	\subsubsection{Admissibility conditions}
	
	Assume now that $(\Gamma_C,s)$ is an $n-$labeled $C-$tree satisfying the column conditions. Here we give the conditions for $\omega_t$ to be admissible columns. We make use of Proposition \ref{prop:admissible tree} for this purpose.
	
	\smallskip
	\noindent We recall that
	$$
	\omega_t=\prod_{l=0}^{t-1} \rho((t-l)l)\prod_{l=1}^{t-1} \rho(l(t-l)^-).
	$$
	is a product of blocks. Thus to check admissibility of $\omega_t$, by Proposition \ref{prop:admissible tree}, it suffices to check whether $N_z(\omega_t)\leq z$ for $z$ the rightmost element of a column $\rho((t-l)l)$, or $z$ such that $\overline{z}$ is leftmost element of a column $\rho(l(t-l)^-)$. We note that the last element of $\rho((t-l)l)$ is $q((t-l)l)$, and the first element of $\rho((t-l)l^-)$ is $\overline{q((t-l)l)}$.
	Thus for $\omega_t$ to be admissible, it suffices that the following hold
	$$
	N_{z_l}(\omega_t)\leq z_l\quad \text{for } z_l=q((t-l)l).
	$$
	for $l=0,\dots,k-1$.
	\noindent We note that given $l$, we have
	$$
	\text{Set}_{z_l}(\omega_t)=\{x\in \rho((t-i)i)\ |\ i\leq l\}\sqcup\{x\in \rho((t-i)i^-)\ |\ i\leq l\},
	$$
	thus we have
	$$
	N_{z_l}(\omega_t)=\sum_{i=0}^l |\rho(t-i)i|+\sum_{i=0}^l |(\rho((t-i)i^-)|
	$$
	where we set $s(t0^-)=0$, thus $\rho(t0^-)=\varnothing$ the unit in $C_n^*$.
	By definition of $R$, we have
	$$
	N_{z_l}(\omega_t)=\sum_{i=0}^l \left(s((t-i)i)+ s((t-i)i^-)\right).
	$$
	This way we obtain the following.
	\begin{prop}(Admissibility conditions)
		For an $n-$labeled $C-$tree that satisfies the column conditions, $\omega_t$ is admissible if and only if for all $l\leq t-1$ we have
		\begin{equation}\label{ineq:admissibility}
		\sum_{i=0}^l \left(s((t-i)i)+ s((t-i)i^-)\right)\leq q((t-l)l).
		\end{equation}
	\end{prop}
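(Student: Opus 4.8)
The plan is to feed the word $\omega_t=\omega_t(\Gamma_C,s)$ into Proposition~\ref{prop:admissible tree}. By construction $\omega_t$ is a product of blocks, namely the unbarred blocks $\rho((t-l)l)=\cons(q((t-l)(l-1)^-),s((t-l)l))$ for $l=0,\dots,t-1$, followed by the barred blocks $\rho(l(t-l)^-)$ for $l=1,\dots,t-1$, where a block $\rho(v)$ with $s(v)=0$ is empty and simply dropped; and since we have assumed the column conditions \eqref{eq:ineq3}, the word $\omega_t$ is a column. Thus Proposition~\ref{prop:admissible tree} applies and states that $\omega_t$ is admissible if and only if $N_z(\omega_t)\leq z$ holds for the finitely many test values $z$, these being the rightmost letter of an unbarred block, together with those $z$ for which $\overline z$ is the leftmost letter of a barred block. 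From the definition of $\rho$ one reads off that the rightmost letter of $\rho((t-l)l)$ is $q((t-l)l)$ and that the leftmost letter of $\rho((t-l)l^-)$ is $\overline{q((t-l)l)}$; hence the two families of test values coincide, and it suffices to verify $N_{z_l}(\omega_t)\leq z_l$ at $z_l:=q((t-l)l)$ for $l=0,\dots,t-1$.

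The next step is to compute $N_{z_l}(\omega_t)=\#\operatorname{Set}_{z_l}(\omega_t)$. Because the labeling is an $n$-labeling we have $z_l\leq n$, so that no barred letter of $\omega_t$ is $\leq z_l$ and no unbarred letter of $\omega_t$ is $\geq\overline{z_l}$; thus $\operatorname{Set}_{z_l}(\omega_t)$ splits as the unbarred letters of $\omega_t$ that are $\leq z_l$ together with the barred letters that are $\geq\overline{z_l}$. This is where the column conditions \eqref{eq:ineq1}--\eqref{eq:ineq2} are used: they say precisely that the last letters $q((t-i)i)=z_i$ of the unbarred blocks are nondecreasing in $i$ (so $z_l$ is a genuine cut point of the staircase of unbarred blocks), and dually on the barred side, whence the unbarred letters $\leq z_l$ are exactly those of the blocks $\rho((t-i)i)$ with $i\leq l$, and the barred letters $\geq\overline{z_l}$ are exactly those of the blocks $\rho((t-i)i^-)$ with $i\leq l$. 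Since $|\rho(v)|=s(v)$ for every vertex $v$ (with the convention $s(t0^-)=0$, so that $\rho(t0^-)$ is empty), this yields
$$
N_{z_l}(\omega_t)=\sum_{i=0}^l\bigl(|\rho((t-i)i)|+|\rho((t-i)i^-)|\bigr)=\sum_{i=0}^l\bigl(s((t-i)i)+s((t-i)i^-)\bigr),
$$
and substituting into $N_{z_l}(\omega_t)\leq z_l=q((t-l)l)$ is exactly inequality~\eqref{ineq:admissibility}.

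I expect the only genuinely delicate point to be the ``staircase'' bookkeeping in the middle of the second step: one must check, using \eqref{eq:ineq1}--\eqref{eq:ineq2} together with the monotonicity of $z\mapsto N_z(-)$, that the unbarred (resp.\ barred) blocks of $\omega_t$ are laid out so that the set of values they occupy forms an initial (resp.\ terminal) segment relative to $z_l$, i.e.\ that no letter of a block with index $i>l$ contributes to $\operatorname{Set}_{z_l}(\omega_t)$ while every letter of a block with $i\leq l$ does, and that this remains correct when some blocks are empty (so the formula \eqref{ineq:admissibility} is still asserted at indices $l$ where both $\rho((t-l)l)$ and $\rho((t-l)l^-)$ vanish). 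Everything else is a routine unwinding of the definitions of $\rho$, $q$ and $\operatorname{Set}_z$.
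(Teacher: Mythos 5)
Your proposal is correct and follows essentially the same route as the paper: apply Proposition \ref{prop:admissible tree} to the block decomposition of $\omega_t$, identify the test values $z_l=q((t-l)l)$ as the rightmost letters of the unbarred blocks (equivalently, the barred letters $\overline{q((t-l)l)}$ heading the barred blocks), and count $\operatorname{Set}_{z_l}(\omega_t)$ block by block using $|\rho(v)|=s(v)$. The ``staircase'' verification you single out (that the column conditions force $l\mapsto q((t-l)l)$ to be nondecreasing, so that exactly the blocks with index $i\leq l$ contribute) is left implicit in the paper, and your treatment of it is sound.
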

	\noindent Note that as $q((t-l)l)=q((t-l)(l-1)^-)+s((t-l)l)$, we can write \eqref{ineq:admissibility} as follows
	\begin{equation}\tag{\ref{ineq:admissibility}$'$}
		\sum_{i=0}^{l-1} \left(s((t-i)i)+ s((t-i)i^-)\right)+s((t-l)l^-)\leq q((t-l)(l-1)^-).
	\end{equation}

	\noindent Again the condition in this form is quite cumbersome. Once again we make use of the graphical presentation of the $C-$tree to illustrate this condition.
	
	\noindent Given $t$, for $l\leq t$, the right side of the inequality \eqref{ineq:admissibility} is the map $q$ evaluated at the vertex $(t-l)l$. The left side of the inequality is the sum of all the labels $s(v)$ on the $t-$th level to the right of the vertex $(t-l)l$.
	
	\smallskip
	\noindent We add edges to $\Gamma_C$ as follows
	
	\begin{itemize}
		\item[1.] Label the inner vertices of level $t-1$ with $q(v)$.
		\item[2.] Label the inner vertices of level $t$, with $p(v)=\sum s(v)$, where the sum is taken over all the vertices to the right (and including) $v$.
		\item[3.] Draw a blue edge $e: v\longrightarrow v'$ to signify $p(v)\leq s(v')$.
	\end{itemize}
	For $t=5$, the $C-$tree with these newly appended blue edges to signify the admissibility of $\omega_5(\Gamma_C,s)$ look as follows.
	\begin{center}
		\begin{tikzpicture}
		\draw[-] (-0.25,0)--(-1,-1);
		\draw[-] (-1,-1)--(-0.25,-1);
		\draw[-] (-0.25,-1)--(-1,-2);
		\draw[-] (-1,-2)--(-0.25,-2);
		\draw[-] (-0.25,-2)--(-1,-3);
		\draw[-] (-1,-3)--(-0.25,-3);
		\draw[-] (-0.25,-3)--(-1,-4);
		\draw[-] (-1,-4)--(-0.25,-4);
		
		\draw[-] (-0.25,0)--(1,-1);
		
		\draw[-] (1,-1)--(0.25,-2);
		\draw[-] (0.25,-2)--(1,-2);
		\draw[-] (1,-2)--(0.25,-3);
		\draw[-] (0.25,-3)--(1,-3);
		\draw[-] (1,-3)--(0.25,-4);
		\draw[-] (0.25,-4)--(1,-4);
		
		\draw[-] (1,-1)--(2.25,-2);
		\draw[-] (2.25,-2)--(1.5,-3);
		\draw[-] (1.5,-3)--(2.25,-3);
		\draw[-] (2.25,-3)--(1.5,-4);
		\draw[-] (1.5,-4)--(2.25,-4);
		
		\draw[-] (2.25,-2)--(3.5,-3);
		
		\draw[-] (3.5,-3)--(2.75,-4);
		\draw[-] (2.75,-4)--(3.5,-4);
		
		\draw[-] (3.5,-3)--(4.75,-4);
		
		\node at (4.75,-4) {$\cdot$};
		\node at (-0.25,0) {$\cdot$};

		\node at (3.5,-4) {\textcolor{blue}{\small{$\bullet$}}};
		\node[below] at (3.5,-4) {\tiny{$p(41^-)$}};
		\node at (2.25,-4) {\textcolor{blue}{\small{$\bullet$}}};
		\node[below] at (2.25,-4) {\tiny{$p(32^-)$}};
		\node at (1,-4) {\textcolor{blue}{\small{$\bullet$}}};
		\node[below] at (1,-4) {\tiny{$p(23^-)$}};
		\node at (-.25,-4) {\textcolor{blue}{\small{$\bullet$}}};
		\node[below] at (-.25,-4) {\tiny{$p(14^-)$}};

		\node at (3.5,-3) {\textcolor{blue}{\small{$\bullet$}}};
		\node[right] at (3.5,-3) {\tiny{$q(4)=s(4)$}};
		\node at (2.25,-3) {\textcolor{blue}{\small{$\bullet$}}};
		\node[above] at (2.25,-3) {\tiny{$q(31^-)$}};
		\node at (1,-3) {\textcolor{blue}{\small{$\bullet$}}};
		\node[above] at (1,-3) {\tiny{$q(22^-)$}};
		\node at (-.25,-3) {\textcolor{blue}{\small{$\bullet$}}};
		\node[above] at (-.25,-3) {\tiny{$q(13^-)$}};
		
		\draw[->,thick,blue] (2.25,-4)--(2.25,-3);
		\draw[->,thick,blue] (1,-4)--(1,-3);
		\draw[->,thick,blue] (1,-4)--(1,-3);
		\draw[->,thick,blue] (-.25,-4)--(-.25,-3);
		\draw[->,thick,blue] (3.5,-4)--(3.5,-3);
		
		\end{tikzpicture}
	\end{center}
	
	\noindent This way, we have specified conditions for an $n-$labeled $C-$tree $T=(\Gamma_C,s)$ to have its level readings $\omega_t(\Gamma_C,s)$ as admissible columns. If $T$ is of rank $k$, then its reading $\omega(T)$ is defined, and is a product of $k$ admissible columns.
	
	\smallskip
	\noindent We now fix notation. For $k\in\mathbb{N}$, denote by $\mathcal{GT}_k(n)$ the set of $n-$labeled $C-$trees $T$ such that
	\begin{itemize}
		\item[$i)$] $T$ is of rank $k$
		\item[$ii)$] $T$ satisfies the column and admissibility conditions, i.e. $T$ is \textit{admissible}.
	\end{itemize}
	
	\noindent We denote $$
	\mathcal{GT}(n)=\bigsqcup_{k\in\mathbb{N}}\mathcal{GT}_k(n).
	$$
	So $\mathcal{GT}(n)$ consists of all the admissible finite $n-$labeled $C-$trees, and the reading map $w$ can be considered as $\omega:\mathcal{GT}(n)\longrightarrow \adm^*$.
	\begin{remark}
		In what follows, the term $C-$tree will mean an element of $\mathcal{GT}(n)$, unless otherwise specified. We usually denote elements of $\mathcal{GT}(n)$ by $T$, and the labeling $s$ will be implicitly assumed.
	\end{remark}
	\section{Parameterizing words of highest weight in $\adm^*$ via $C-$trees}\label{sec:calculations with c trees}
	
	\subsection{Normal form, and weights of words of $C-$trees}
	\noindent Let $T$ be a $C-$tree of rank $k$. In Definition \ref{def:wordofC} we have defined the reading of $T$ denoted by $\omega(T)$, which is a word in $\adm^*$. In what follows, we will describe the normal form of the word $\omega(T)$ in the $\mathbb{N}-$decorated plactic monoid $Pl^\mathbb{N}(C_n)$. More precisely, for $T\in\mathcal{GT}_k(n)$, we set $q_i(T):=q(i(k-i)^-)$ and we prove the following
	
	\begin{thm}\label{thm:normalofC}
		Let $T\in\mathcal{GT}_k(n)$, and set $q_i=q_i(T)$. Then
		$$
		[\omega(T)]=\prod_{i=0}^{k-1} \cons(q_{k-i}).
		$$
	\end{thm}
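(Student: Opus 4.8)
The plan is to induct on the rank $k$. When $k=1$, $\omega(T)=\rho(1)=\cons(s(1))$ is already a single admissible column, hence equals its own normal form, and $q_1(T)=q(1\cdot 0^-)=s(1)$, so the statement holds. For the inductive step let $T\in\mathcal{GT}_k(n)$ and let $T'$ be its rank-$(k-1)$ truncation, i.e. $T$ with all level-$k$ labels forgotten; since the column and admissibility conditions for levels $\le k-1$ do not mention the level-$k$ labels, $T'\in\mathcal{GT}_{k-1}(n)$. From $\omega(T)=\omega(T')\,\omega_k(T)$ and the fact that $\omega_k(T)$ is a single admissible column (or $\epsilon$), Corollary~\ref{cor:expressing the new monoid} shows that the tableau part of $[\omega(T)]$ is $\big(\text{tableau part of }[\omega(T')]\ \leftarrow\ \omega_k(T)\big)$, and since lengths are preserved in $Pl^\mathbb{N}(C_n)$ it suffices to match these tableaux. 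By the induction hypothesis $[\omega(T')]=\prod_{i=0}^{k-2}\cons(p_{k-1-i})$ with $p_j:=q(j(k-1-j)^-)$; the column conditions force $p_1\ge p_2\ge\cdots\ge p_{k-1}$, so $[\omega(T')]$ is the highest-weight tableau whose columns are $\cons(p_1),\dots,\cons(p_{k-1})$.

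So everything reduces to a single insertion. Unwinding the reading map, $\omega_k(T)$ is the column word $\cons(s(k))\,\rho((k-1)1)\cdots\rho(1(k-1))\,\rho(1(k-1)^-)\cdots\rho((k-1)1^-)$, where on strand $j<k$ the outer block is $\rho(j(k-j))=\cons(p_j,\alpha_j)=(p_j+1)\cdots(p_j+\alpha_j)$ with $\alpha_j=s(j(k-j))$, the inner block is $\rho(j(k-j)^-)=\cons(\overline{m_j},\beta_j)$ with $m_j=p_j+\alpha_j$ and $\beta_j=s(j(k-j)^-)$, and $q_j(T)=m_j-\beta_j$ while $q_k(T)=s(k)$. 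I would first record the block-insertion identities one needs, extending Lemma~\ref{lemm: ref for n 2}: inserting $\cons(p_j,\alpha_j)$ on top of the column $\cons(p_j)$ turns it into $\cons(m_j)$, and inserting the barred block $\cons(\overline{m_j},\beta_j)$ into $\cons(m_j)$ turns it into $\cons(m_j-\beta_j)=\cons(q_j)$ via an $R_3$ cancellation. Inserting the blocks of $\omega_k(T)$ into $[\omega(T')]$ one at a time, the unbarred part replaces each $\cons(p_j)$ by $\cons(m_j)$ and appends a new column $\cons(s(k))$ — here the ``red-edge'' column inequalities $m_j\le p_{j-1}$ (and $s(k)\le p_{k-1}$) guarantee that each block slots onto exactly one column without spilling over — and then the barred part shrinks each $\cons(m_j)$ to $\cons(q_j)$, the ``blue-edge'' admissibility inequalities ensuring that each barred block reaches precisely the column it must cancel against. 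The result is the tableau with columns $\cons(q_1),\dots,\cons(q_k)$, i.e. $[\omega(T)]=\prod_{i=0}^{k-1}\cons(q_{k-i})$, which also re-establishes the highest-weight form needed for the next step.

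The main obstacle is exactly this bookkeeping: one must check that the column and admissibility conditions are precisely what is required for each of the $2(k-1)+1$ blocks of $\omega_k(T)$ to interact cleanly with a single column of the current tableau, and one must handle the degenerate cases where some $q_j=0$, in which a column disappears and produces an extra $\epsilon$. A slicker alternative becomes available once one knows — proved separately via the $\pm$-signature rule for the Kashiwara operators, which the column and admissibility conditions are designed to satisfy — that $\omega(T)$ is a word of highest weight: then by Corollary~\ref{cor:hw decorated} its normal form has the form $\prod\cons(a_i)$, and since $\omega(T)$ and $\prod_{i=0}^{k-1}\cons(q_{k-i})$ both have length $k$, Theorem~\ref{prop:crystal in plactic} reduces the claim to showing the two words have equal weight. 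The weight of $r(\omega(T))$ can be computed strand by strand: the blocks on strand $j$ encode a lattice path starting at $0$ and ending at $q_j(T)$ whose up-steps add one copy of a letter and whose down-steps delete one, so a telescoping identity gives net multiplicity $\mathbf{1}[\,q_j(T)\ge i\,]$ for each letter $i$; summing over $j=1,\dots,k$ and using that the weight-lattice grading is a plactic invariant in type $C$ (the relation $R_3$ deletes the zero-weight pair $z\bar z$) yields exactly the weight of $\cons(q_1)\cdots\cons(q_k)$. Either route completes the proof.
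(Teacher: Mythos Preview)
Your first route follows the same inductive skeleton as the paper: truncate, replace the truncation by its standard form via the inductive hypothesis, then insert the last level. The paper, however, organises the insertion differently. It does \emph{not} insert $\omega_k(T)$ into the full tableau $[\omega(T')]$; it inserts only into the single longest column $\cons(p_1)$ (Proposition~\ref{prop:third dude}), obtaining a two-column word which is the reading of a new $C$-tree carrying no barred labels, then applies the inductive hypothesis a \emph{second} time and finishes with Corollary~\ref{cor:normal form of C}. Your unbarred-block claim is correct and matches the paper's computation. The barred-block claim, though, is not how the insertion actually runs: in Lecouvey's algorithm an $R_3$ step re-inserts the entire resulting column into the remaining tableau, so the block $\rho(j(k-j)^-)$ first hits $\cons(q_1)$, triggers a cascade through $\cons(q_1),\dots,\cons(q_{j-1})$, and only as a \emph{net} effect does $\cons(m_j)$ become $\cons(q_j)$ while the longer columns are restored. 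That net effect is true (and can be checked for small $j$), but your one-line appeal to the ``blue-edge'' inequalities does not establish it; carrying it out directly is essentially redoing Proposition~\ref{prop:third dude} inside the full tableau. So the first approach is right in outline but the stated mechanism for the barred part is inaccurate, and this is precisely where the paper's double-induction device earns its keep.

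Your second route is a genuinely different argument. In the paper the highest-weight property of $\omega(T)$ is a \emph{corollary} of this theorem (Corollary~\ref{cor:HW trees}), so you must supply an independent proof via the signature rule before you may invoke it; this is plausible but is real work, not a formality. Granting that, your weight computation (telescoping along each strand to get multiplicity $\mathbf{1}[q_j\ge i]$ for the letter $i$) and the conclusion via Theorem~\ref{prop:crystal in plactic} are correct: two highest-weight words of the same weight and the same $\acol$-length lie in the same $\equiv_\epsilon$-class, hence agree in $Pl^{\mathbb N}(C_n)$. This route trades all of the insertion bookkeeping for the separate highest-weight verification and the check that $q_1\ge q_2\ge\cdots\ge q_k$ (needed for $\prod_i\cons(q_{k-i})$ to already be normal), both of which you should make explicit.
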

	\noindent We will prove this theorem by induction on $k$. For this purpose, we prove several weaker propositions leading to the proof of the theorem. We note right away a crucial consequence of this theorem.
	
	\begin{cor}\label{cor:HW trees}
		Let $T\in\mathcal{GT}_{k}(n)$. Then $\omega(T)$ is of highest weight.
	\end{cor}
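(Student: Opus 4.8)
The plan is to push the statement through the normal form. By Theorem~\ref{thm:normalofC} the normal form $[\omega(T)]$ is already computed as a product of block columns, and the claim will follow once we recognise that product as a symplectic tableau all of whose rows are constant. First I would reduce to this normal form: by Theorem~\ref{prop:crystal in plactic} we have $\omega(T)\equiv_\epsilon[\omega(T)]$, so $B(\omega(T))$ and $B([\omega(T)])$ are isomorphic as labelled graphs through an isomorphism identifying the two distinguished vertices. Since being of highest weight means being a source of one's connected component — a property preserved by such an isomorphism — $\omega(T)$ is of highest weight if and only if $[\omega(T)]$ is (equivalently, apply Corollary~\ref{cor:kash+normal} together with its evident analogue for the operators $e_i$). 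So it suffices to prove that $[\omega(T)]$ is of highest weight in $Pl^{\mathbb{N}}(C_n)$.

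Next, set $q_i=q_i(T)=q(i(k-i)^-)$; Theorem~\ref{thm:normalofC} gives $[\omega(T)]=\prod_{i=0}^{k-1}\cons(q_{k-i})=\cons(q_k)\cons(q_{k-1})\cdots\cons(q_1)$, a product of block columns $\cons(a)=12\cdots a$. The one genuine verification is that $q_k\le q_{k-1}\le\cdots\le q_1$, and this is a one-line consequence of the column conditions~\eqref{eq:ineq3}: for $1\le i\le k-1$ the vertex $v=(i+1)(k-i-1)$ (for $i=k-1$ this is the root $k0$) is the outer vertex of strand $i+1$ sitting on level $k$, so \eqref{eq:ineq3} applied at $v$ yields $q((i+1)(k-i-1))\le q(i(k-i)^-)=q_i$, and combining with $q_{i+1}=q((i+1)(k-i-1)^-)\le q((i+1)(k-i-1))$ gives $q_{i+1}\le q_i$.

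Finally, putting $a_j=q_{k-j+1}$ we obtain $[\omega(T)]=\prod_{j=1}^{k}\cons(a_j)$ with $0\le a_1\le a_2\le\cdots\le a_k\le n$; this is exactly the shape singled out in Corollary~\ref{cor:hw decorated}. Concretely, because each $\cons(a_j)$ is a prefix of $\cons(a_{j+1})$, a direct computation of the column insertion shows no bumping cascades occur and $[\omega(T)]$ is the symplectic tableau whose columns, read left to right, are $\cons(a_k)\supseteq\cons(a_{k-1})\supseteq\cdots\supseteq\cons(a_1)$, so its $r$-th row consists entirely of $r$'s. A row-constant symplectic tableau admits no $e_i$ — the easy converse of the Kashiwara--Nakashima criterion recalled above — hence is of highest weight, and therefore $[\omega(T)]$, and with it $\omega(T)$, is of highest weight. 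The main (and only) obstacle is the monotonicity of the $q_i$ in the second paragraph, and that reduces immediately to \eqref{eq:ineq3}; everything else is the highest-weight transfer of the first paragraph plus the standard fact that a row-constant tableau is a highest weight vector.
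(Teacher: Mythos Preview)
Your argument is correct and follows the same route as the paper: use Theorem~\ref{thm:normalofC} to identify $[\omega(T)]$ with a product of block columns $\cons(q_{k-i})$, recognise this as a highest-weight tableau (the paper invokes Corollary~\ref{cor:hw decorated}, you spell out the row-constancy and the Kashiwara--Nakashima criterion), and transfer back to $\omega(T)$. Your explicit verification of the monotonicity $q_{i+1}\le q_i$ via the column conditions~\eqref{eq:ineq3} is a nice touch but in fact redundant, since Theorem~\ref{thm:normalofC} already asserts that $\prod_i\cons(q_{k-i})$ is the \emph{normal form} of $\omega(T)$, and normality in \pocol\ forces the required ordering of the columns.
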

	
	\begin{proof}
		By Theorem \ref{thm:normalofC} and Corollary \ref{cor:hw decorated}, we see that $[\omega(T)]$ is highest weight, hence $\omega(T)$ is also of highest weight.
	\end{proof}

	\noindent In other words
	$
	\omega:\mathcal{GT}(n)\longrightarrow \adm^*
	$
	maps every $C-$tree to a highest weight word in $\adm^*$.
	
	\smallskip
	\noindent We now work towards proving Theorem \ref{thm:normalofC}. We begin by establishing some smaller results first.
	
	\noindent Let $k\in\mathbb{N}$ and $a_i\in\mathbb{N}$ with $a_1\geq \cdots \geq a_k \geq a_{k+1}=0$. Consider the labeling $s:V(\Gamma_C(k))\longrightarrow\mathbb{N}$ on $\Gamma_C$  given by
	$$
	s(v)=\begin{cases}
	a_{k-j}-a_{k-j+1} & \text{if } v=ij,\ i+j\leq k,\\
	0 & \text{otherwise}.
	\end{cases}
	$$
	Denote $T(a_i)_{i=1}^k:=(\Gamma_C,s)$.
	
	\begin{prop}\label{prop:standard C trees}
		If $a_i$ as above, with $a_i\leq n$, then $T=T(a_i)_{i=1}^k\in\mathcal{GT}_{k}(n)$. Moreover
		$$
		[\omega(T)]=\omega(T)=\prod_{i=0}^{k-1}\cons(a_{k-i}).
		$$
	\end{prop}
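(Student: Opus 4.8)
The plan is to establish both claims by a direct computation resting on one observation: for the labeling $s$ defining $T=T(a_i)_{i=1}^{k}$, the valuation $q$ depends only on the ``second coordinate''. Indeed $s$ vanishes on every inner vertex, and on the outer vertices $il$ (with $i+l\le k$) of a strand one has $s(il)=a_{k-l}-a_{k-l+1}$ independently of $i$; hence for any vertex $v\in\{ij,\,ij^{-}\}$ of $\Gamma_C(k)$, telescoping with $a_{k+1}=0$ gives
$$
q(v)=\sum_{l=0}^{j}\bigl(a_{k-l}-a_{k-l+1}\bigr)=a_{k-j}.
$$
I would prove this first, as everything below follows from it.

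Granted this, I would check the three defining conditions of $\mathcal{GT}_k(n)$. The $n$-labeling condition $0=a_{k+1}\le a_{k-j}\le a_{1}\le n$ holds by hypothesis, so $\rho$ is defined on all of $\Gamma_C(k)$. The column conditions \eqref{eq:ineq3} read $a_{k-j}\le a_{k-j-1}$ and $a_{k-j}\le a_{k-j}$, which hold because $(a_i)$ is weakly decreasing. For the admissibility conditions \eqref{ineq:admissibility}, fix $t\le k$ and $l\le t-1$; since $s((t-i)i^{-})=0$ and $s((t-i)i)=a_{k-i}-a_{k-i+1}$, the left-hand side telescopes to $a_{k-l}$, which equals the right-hand side $q((t-l)l)$. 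Hence $T\in\mathcal{GT}_k(n)$.

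For the reading I would compute $\rho$ at each vertex: $\rho(i0)=\cons(a_k)$; $\rho(ij^{-})=\cons(\overline{q(ij)},0)=\varnothing$; and, for $j\ge 1$,
$$
\rho(ij)=\cons\bigl(q(i(j-1)^{-}),\,s(ij)\bigr)=\cons\bigl(a_{k-j+1},\,a_{k-j}-a_{k-j+1}\bigr),
$$
which is the block of consecutive letters from $a_{k-j+1}+1$ to $a_{k-j}$ (empty when $a_{k-j}=a_{k-j+1}$). Since inner vertices do not contribute, the $t$-th level reads $\omega_t(T)=\rho(t0)\rho((t-1)1)\cdots\rho(1(t-1))$, whose blocks cover the consecutive ranges $[1,a_k],\,[a_k+1,a_{k-1}],\,\dots,\,[a_{k-t+2}+1,a_{k-t+1}]$ and therefore concatenate to $\cons(a_{k-t+1})$. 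Multiplying over $t=1,\dots,k$ yields $\omega(T)=\prod_{i=0}^{k-1}\cons(a_{k-i})$. Finally, to get $[\omega(T)]=\omega(T)$, I would invoke the explicit description of normal forms from the proof of Theorem \ref{thm:conv poly}: the word $\cons(a_k)\cons(a_{k-1})\cdots\cons(a_1)$ is normal in $Pl^\mathbb{N}(C_n)$ because each $\cons(a)=12\cdots a$ is an admissible column and, for consecutive factors, $\cons(a_m)\preceq\cons(a_{m+1})$ — the lengths are monotone by $a_m\ge a_{m+1}$, the overlapping letters agree, and no $(a,b)$-configuration can occur since neither column contains a barred letter (the leading $\epsilon$'s, if any, being absorbed by the extended relation $c\preceq\epsilon$). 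Hence no reduction applies and $[\omega(T)]=\omega(T)=\prod_{i=0}^{k-1}\cons(a_{k-i})$.

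The proof is essentially bookkeeping on the $C$-tree; the only spots needing care are the two telescoping identities and the degenerate blocks $\cons(a,0)=\varnothing$ and $\cons(0)=\epsilon$, which must be handled because the $a_i$ are only weakly decreasing and may vanish. I anticipate no conceptual obstacle — this proposition is the base case of the induction used to prove Theorem \ref{thm:normalofC}, and is meant to be routine.
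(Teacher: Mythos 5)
Your proposal is correct and follows essentially the same route as the paper: compute $q(v)=a_{k-j}$ by telescoping, verify the $n$-labeling, column, and admissibility conditions (each reducing to $a_{k-j}\le a_{k-j-1}$ or to an equality), assemble the level readings into $\cons(a_{k-t+1})$ via concatenation of consecutive blocks, and conclude normality from the weak monotonicity of the $a_i$. The extra care you take with degenerate blocks and the $\preceq$ check only makes explicit what the paper leaves implicit.
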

	\begin{proof}
		Note that for $v\in V(\Gamma_C(k))$, if $v=ij^{\pm}$, we have
		\begin{equation}\label{eq:calc of q}
		q(v)=\sum_{w\leq v} s(w)=\sum_{l=0}^js(il)=\sum_{l=0}^j(a_{k-l}-a_{k-l+1})=a_{k-j}\leq n,
		\end{equation}
		Hence $s$ is indeed an $n-$labeling of $\Gamma_C$ of rank $k$.  To show that $T\in\mathcal{GT}(n)$, we need to check whether $T$ satisfies the column and admissibility conditions.
		
		\smallskip
		\noindent For the column conditions, we need to check whether $q(ij)\leq q((i-1)j^-)$ and $q(ij)\leq  q((i-1)(j+1)^-)$ hold for all $i,j$.  By \eqref{eq:calc of q}, we see that $q(ij)=a_{k-j}$, hence the inequalities become $a_{k-j}\leq a_{k-j},a_{k-j-1}$, both of which hold due to the assumptions on $a_i$.
		
		\noindent For the admissibility conditions, we need to check whether the inequality
		$$
		\sum_{i=0}^{r-1} \left(s((l-i)i)+ s((r-i)i^-)\right)\leq q((l-r)(r-1)^-).
		$$
		holds for all $r \leq l-1 \leq k-1$. Again, from our computations of $q$ and $s$  we have $q((l-r)(r-1)^-)=a_{k-r+1}$, and $\sum_{i=0}^{r-1} s((l-i)i)=\sum_{i=0}^{r-1}(a_{k-i}-a_{k-i+1})=a_{k-r+1}$. Thus the admissibility inequality becomes $a_{k-r+1}\leq a_{k-r+1}$, which holds for all $l,r$, hence $T$ satisfies the admissibility conditions as well. Thus we indeed have $T\in\mathcal{GT}_{k}(n)$.
		
		\smallskip
		\noindent Note now that the reading of the $l-$th level of $T$ is as follows
		$$
		\omega_l(T)=\prod_{i=0}^{l-1} \rho((l-i)i) = \prod_{i=0}^{l-1}\cons(q((l-i)(i-1)^-),s((l-i)i))=\prod_{i=0}^{l-1}\cons(a_{k-i+1},a_{k-i}-a_{k-i+1})=\cons(a_{k-l}).
		$$
		\noindent The last equality holds because we set $a_{k+1}=0$, and in general we have $\cons(a,b)\cons(a+b,c)=\cons(a,b+c)$.
		
		\smallskip
		\noindent Thus we have that the reading of $T$ is
		$$
		\omega(T)=\prod_{l=0}^{k-1} \omega_l(T)=\prod_{l=0}^{k-1}\cons(a_{k-l+1}),
		$$
		which is what we wanted to show. As we have $a_{k-l}\leq a_{k-l-1}$, we can see that $\omega(T)$ is indeed of normal form, thus we have $[\omega(T)]=\omega(T)$.
	\end{proof}
	\noindent Proposition \ref{prop:standard C trees} establishes the existence of certain $C-$trees, and computes their reading. We call a $C-$tree of the form $T(a_i)_{i=1}^k$ with $a_1\geq \dots \geq a_k$ a \textit{standard C-tree}. The significance of these particular $C-$trees is that $\omega(T)$ is of normal form in in $\adm^*$, and it is of highest weight according to Corollary \ref{cor:hw decorated}.
	
	\smallskip
	\noindent The following proposition asserts that given a $C-$tree $T\in\mathcal{GT}_{k+1}(n)$, and replacing its $k-$th truncation with a certain standard $C-$tree, produces another $C-$tree, and we compute its reading.
	
	\begin{prop}\label{prop:helper2 for normal}
		Let $T=(\Gamma_C,s)\in\mathcal{GT}_{k+1}(n)$, and let $T_0$ be the $k$th truncation of $T$. Let $Q=T(q_i(T_0))_{i=1}^{k}$, and denote its labeling by $s_Q$. Let $T'=(\Gamma_C,s')$ of rank $k+1$ where $s'$ is given by
		$$
		s'(v)=\begin{cases}
		s_Q(v) & \text{if } \text{lev}(v)\leq k,\\
		s(v) & \text{otherwise}.
		\end{cases}
		$$
		Then $T'\in\mathcal{GT}_{k+1}(n)$, $q_i(T)=q_i(T')$, and $\omega(T')=\omega(Q)\omega_{k+1}(T)$.
	\end{prop}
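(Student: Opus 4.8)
The plan is to reduce everything to one observation: writing $q'$, $q$, $q_Q$ for the valuations of $T'$, $T$, $Q$ respectively, I claim that $q'=q_Q$ on every vertex of level $\le k$ and $q'=q$ on every vertex of level $k+1$. Before proving this I would check that $Q$ is well defined and lies in $\mathcal{GT}_k(n)$: the $k$-th truncation $T_0$ satisfies all the column and admissibility inequalities that $T$ does at levels $\le k$, so $T_0\in\mathcal{GT}_k(n)$; applying the column condition of $T_0$ at the vertex $(i{+}1)(k{-}i{-}1)$ gives $q_{T_0}(i(k{-}i)^-)\ge q_{T_0}((i{+}1)(k{-}i{-}1))\ge q_{T_0}((i{+}1)(k{-}i{-}1)^-)$, i.e. $q_i(T_0)\ge q_{i+1}(T_0)$, while $0\le q_i(T_0)\le n$ because $s$ is an $n$-labeling. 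Hence $(q_i(T_0))_{i=1}^k$ is a weakly decreasing sequence in $\{0,\dots,n\}$, so $Q=T(q_i(T_0))_{i=1}^k$ is a standard $C$-tree; by Proposition~\ref{prop:standard C trees}, $Q\in\mathcal{GT}_k(n)$, and from its proof $q_Q(ij^\pm)=q_{k-j}(T_0)$ for every vertex of $\Gamma_C(k)$, so in particular $q_Q(i(k{-}i)^-)=q_i(T_0)$, i.e. $q_i(Q)=q_i(T_0)$.

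Now I would prove the observation. If $\text{lev}(v)\le k$, then $q'(v)$ is a $\pm1$-combination of the values $s'(w)$ over the vertices $w\le v$ on the strand of $v$, all of which have level $\le\text{lev}(v)\le k$, where $s'$ agrees with $s_Q$; hence $q'(v)=q_Q(v)$. If $v=ij$ is an outer vertex of level $k+1$ (so $i+j=k+1$), its predecessor $i(j{-}1)^-$ has level $k$, hence $q'(i(j{-}1)^-)=q_Q(i(j{-}1)^-)=q_{k-(j-1)}(T_0)=q_i(T_0)$; and since $i(j{-}1)^-=i(k{-}i)^-$, also $q(i(j{-}1)^-)=q_i(T_0)$ directly from the definition of $q_i(T_0)$. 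As $s'(ij)=s(ij)$ at level $k+1$,
\[
q'(ij)=q'(i(j{-}1)^-)+s'(ij)=q_i(T_0)+s(ij)=q(i(j{-}1)^-)+s(ij)=q(ij),
\]
and for an inner vertex $ij^-$ of level $k+1$ this propagates through $q'(ij^-)=q'(ij)-s'(ij^-)$, $q(ij^-)=q(ij)-s(ij^-)$ and $s'(ij^-)=s(ij^-)$.

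The three conclusions then follow by unwinding definitions. Since $i(k{+}1{-}i)^-$ has level $k+1$, we get $q_i(T')=q'(i(k{+}1{-}i)^-)=q(i(k{+}1{-}i)^-)=q_i(T)$. For $T'\in\mathcal{GT}_{k+1}(n)$: the bound $0\le q'(v)\le n$ holds because $q'(v)$ equals $q_Q(v)$ or $q(v)$; the column and admissibility inequalities indexed by a vertex of level $\le k$ coincide, via the observation and $s'|_{\text{lev}\le k}=s_Q$, with those of $Q\in\mathcal{GT}_k(n)$, and those indexed by a vertex $ij$ of level $k+1$ coincide, via $q'=q$ there and $s'|_{\text{lev}=k+1}=s|_{\text{lev}=k+1}$, with those of $T\in\mathcal{GT}_{k+1}(n)$; the single inequality straddling the two levels is the column condition $q'(ij)\le q'((i{-}1)j^-)$ with $i+j=k+1$, which reads $q(ij)\le q_Q((i{-}1)j^-)=q_{i-1}(T_0)=q((i{-}1)j^-)$ since $(i{-}1)j^-=(i{-}1)(k{-}(i{-}1))^-$, hence holds by the column condition of $T$. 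Finally, $\rho$ at a level-$t$ vertex depends only on $s$ there and on the valuation at its level-$(t{-}1)$ predecessor, so the observation gives $\omega_t(T')=\omega_t(Q)$ for $t\le k$, while at $t=k+1$ the predecessors are exactly the vertices $i(k{-}i)^-$, where $q_Q=q_{T_0}$ by $q_i(Q)=q_i(T_0)$, so $\omega_{k+1}(T')=\omega_{k+1}(T)$; therefore $\omega(T')=\bigl(\prod_{t=1}^{k}\omega_t(Q)\bigr)\omega_{k+1}(T)=\omega(Q)\,\omega_{k+1}(T)$.

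I expect the only genuine point to be the level-$(k{+}1)$ half of the observation --- that excising the truncation $T_0$ and grafting in the standard tree $Q$ leaves the valuations one level higher untouched. This works precisely because $Q$ is built so that $q_i(Q)=q_i(T_0)$, which is exactly what Proposition~\ref{prop:standard C trees} records; everything else is routine manipulation of the column/admissibility conditions and of the reading map $\omega$.
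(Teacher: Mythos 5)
Your proposal is correct and follows essentially the same route as the paper: reduce the levels $\le k$ to Proposition~\ref{prop:standard C trees}, observe that $q_i(Q)=q_i(T_0)$ forces the valuations at level $k$ (and hence all conditions at level $k+1$) to coincide with those of $T$, and read off $\omega(T')=\omega(Q)\omega_{k+1}(T)$. You merely fill in details the paper leaves implicit, such as the monotonicity $q_i(T_0)\ge q_{i+1}(T_0)$ needed for $Q$ to be a standard $C$-tree and the column inequality straddling levels $k$ and $k+1$.
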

	\begin{proof}
		Again we check that $s'$ is an $n-$valuation, and that $T'$ satisfies the column and admissibility conditions. Note that for $v\in V(\Gamma_C)$ with $\text{lev}(v)\leq k$ we have $s'=s_Q$, and by Proposition \ref{prop:standard C trees} we see that $s'$ satisfies these conditions. It remains to check for vertices $v$ with $\text{lev}(v)=k+1$.
		
		\smallskip
		\noindent Note that $q_i(Q)=q_i(T_0)$, hence we have $q_{T'}(i(k-i)^-)=q_i(Q)=q_i(T_0)=q_T(i(k-i)^-)$. Moreover, we have
		$$
		q_i(T')=q_i(T_0)+s(i(k+1-i))-s(i(k+1-i)^-)=q_i(T).
		$$
		Hence the $n-$labeling, column, and admissibility conditions for vertices $v\in\Gamma_C$ with $\text{lev}(v)=k+1$ in $T'$ are identical with those in $T$, thus $T'$ also satisfies these conditions, and we have $T'\in\mathcal{GT}_{k+1}(n)$. The reading $\omega(T')$ is clearly $\omega(Q)\omega_k(T)$.
	\end{proof}
	\noindent Next we explicitly calculate an insertion of the last two columns of certain $C-$trees.
	
	\begin{prop}\label{prop:third dude}
		Let $T\in\mathcal{GT}_{k+1}(n)$ and let $Q$ be its $k$th truncation. Assume that $Q$ is a standard $C-$tree, i.e. $Q=T(a_i)_{i=1}^{k}$ for some $a_i\in\mathbb{N}$. Let $T'=(\Gamma_C,s')$ of rank $k+1$ with $s'$ defined as follows
		$$
		s'(v)=\begin{cases}
		s((i+1)(k-i)) & \text{if } v=i(k-i)\ i=1,\dots,k\\
		q_i(T)-q((i+1)(k-i)) & \text{if } v=i(k+1-i)\ i=1,\dots,k+1\\
		s(v) & \text{otherwise}.
		\end{cases}
		$$
		Then $T'\in\mathcal{GT}_{k+1}(n)$, $q_i(T')=q_i(T)$, and
		$$
		\omega(T')=\prod_{l=0}^{k-1} \omega_l(T) [\omega_k(T)\omega_{k+1}(T)].
		$$
	\end{prop}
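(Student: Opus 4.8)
The plan is to verify the three assertions in turn: that $T'$ is a legitimate $C$-tree and that $q_i(T')=q_i(T)$ are direct computations with the valuation $q$, whereas the reading formula reduces to an explicit insertion calculation.

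First I would unpack what it means for $Q=T(a_i)_{i=1}^k$ to be standard: by Proposition~\ref{prop:standard C trees}, in $T$ every inner vertex of level $\le k$ carries label $0$, one has $q(ij)=a_{k-j}$ at such vertices, and $\omega_l(T)=\cons(a_{k-l+1})$ for all levels $l\le k$; in particular $\omega_k(T)=\cons(a_1)$ is a single block and $a_1\ge a_2\ge\cdots\ge a_k$. Since $s'$ agrees with $s$ on every vertex of level $\le k-1$ and on every inner vertex, the valuation $q_{T'}$ can differ from $q_T$ only along the outer vertices of levels $k$ and $k+1$. Computing $q_{T'}$ strand by strand, the contributions coming from below level $k$ telescope via $q_Q(ij)=a_{k-j}$, and the two new outer labels $s'(i(k-i))=s((i+1)(k-i))$ and $s'(i(k+1-i))$ are arranged precisely so that the partial sums along strand $i$ reproduce $q_i(T)$; the bounds $0\le q_{T'}(v)\le n$ and the column and admissibility inequalities \eqref{eq:ineq3} and \eqref{ineq:admissibility} at the new vertices then reduce to the same inequalities for $T$ (which hold as $T\in\mathcal{GT}_{k+1}(n)$) together with the column condition of $T$ at the vertices $(i+1)(k-i)$. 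This establishes $T'\in\mathcal{GT}_{k+1}(n)$ and $q_i(T')=q_i(T)$.

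For the reading formula, the earlier levels of $T'$ are untouched, so they contribute $\prod_{l}\omega_l(T)$ exactly as in $T$, and it remains to prove $\omega_k(T')\,\omega_{k+1}(T')=[\omega_k(T)\,\omega_{k+1}(T)]=[\cons(a_1)\,d]$, where $d=\omega_{k+1}(T)$ is, by \eqref{eq:reading of cols}, an unbarred block-product (from the outer vertices of level $k+1$) followed by a barred block-product (from the inner vertices). I would run the column insertion of $d$ into $\cons(a_1)=12\cdots a_1$ one block at a time: because $\cons(a_1)$ consists of exactly the letters $1,\dots,a_1$ and each block of $d$ is a contiguous run of letters in $C_n$, every step is governed by Lemma~\ref{lemm: ref for n 2} --- a consecutive unbarred block splits off a new column, a consecutive barred block truncates the block below it --- with the admissibility bookkeeping supplied by Proposition~\ref{prop:admissible tree}, and by Lemma~\ref{lem:2collem} the running result never exceeds two columns. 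Following these two columns, the left one is assembled from the unbarred blocks of $d$, shifted along the strands, and turns out to be exactly the level-$k$ reading $\omega_k(T')$, while the right one collects the part of $\cons(a_1)$ surviving the barred blocks together with the (possibly shortened) barred part of $d$, and matches the level-$(k+1)$ reading $\omega_{k+1}(T')$ for $s'$ as defined; comparing block endpoints against the formulas defining $\rho$ on the vertices of levels $k$ and $k+1$ finishes the identification.

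The step I expect to be the main obstacle is this last insertion computation: one must track carefully how the barred blocks of $d$ cancel against the body of $\cons(a_1)$ and against the unbarred blocks already inserted (the plactic relation $R_3$), and check that the column and admissibility inequalities which $T$ satisfies are exactly the hypotheses needed for Lemma~\ref{lemm: ref for n 2} to apply at each step and for the resulting column endpoints to coincide with the valuations $q_{T'}(v)$. By contrast, establishing $T'\in\mathcal{GT}_{k+1}(n)$ and $q_i(T')=q_i(T)$ is a bounded amount of telescoping algebra once the standardness of $Q$ has been unpacked.
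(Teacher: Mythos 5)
Your proposal follows essentially the same route as the paper: the paper likewise exploits that $\omega_k(T)=\cons(a_1)$ is a single block, slides the unbarred blocks of $\omega_{k+1}(T)$ past it via Lemma \ref{lemm: ref for n 2} (their maxima being bounded by $a_1$ through the column/admissibility inequalities) to form the new level-$k$ column, and then inserts the barred blocks one at a time, each splitting the remaining column, to produce the new level-$(k+1)$ column matching $s'$. The only cosmetic difference is ordering: you verify $T'\in\mathcal{GT}_{k+1}(n)$ and $q_i(T')=q_i(T)$ up front by telescoping, whereas the paper reads these off at the end from the computed form of $[\omega_k(T)\omega_{k+1}(T)]$.
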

	\begin{proof}
		We note that $\omega_k(T)=\omega_k(Q)=\cons(a_{k-k+1})=\cons(a_1)$, and moreover $a_1=q_1(Q)=q_T(0k)$. By definition we have
		$$
		\omega_{k+1}(T)=\prod_{i=0}^{k}\rho((k+1-i)i)\prod_{i=1}^k \rho(i(k+1-i)^-).
		$$
		Note now that
		$
		\rho(k+1)=\cons(s(k+1)),\ \rho(k1)=\cons(s(k),s(k1))
		$, and for $i>1$ we have
		$$
		\rho((k+1-i)i)=\cons(q((k+1-i)(i-1)^-),s((k+1-i)i))=\cons(a_{k+1-i},s((k+1-i)i)),
		$$
		and
		$$
		\rho(i(k+1-i)^-)=\cons(\overline{q(i(k+1-i))},s(i(k+1-i))).
		$$
		We now use Lemma \ref{lemm: ref for n 2} to compute the insertion $\omega_k\leftarrow \omega_{k+1}$. Note first that
		$$
		\omega_k'=\left(\omega_k\longleftarrow \prod_{i=0}^{k-1} \rho((k+1-i)i)\right) = \prod_{i=0}^{k-1}\rho((k+1-i)i)\omega_k. 
		$$
		Indeed, $\omega_k=\cons(a_1)$, and the for largest element of $\rho((k+1-i)i)$ for $i\leq k$ is
		$$
		a_{k+1-i}+s((k+1-i)i)=q_T((k+1-i)i)\leq q_T((k-i)i^-)\leq a_i\leq a_1.
		$$
		Since $(\omega_k\leftarrow \rho(1k)\rho(1k^-))=(\cons(a_1)\leftarrow \cons(a_1;s(1k),s(1k^-))=\cons(a_1+s(1k)-s(1k^-))=\cons(q_1)$, we have
		$$
		(\omega_k'\leftarrow \rho(1k))\rho(1k^-))=\prod_{i=0}^{k-1}\rho((k+1-i)i)\cons(q_1).
		$$
		Denote $\overline{R_i}=\rho(i(k+1-i)^-)$. Since $q_2+s(2(k-1)^-)=q(2(k-1))\leq q_1$, we have
		$$
		\cons(q_1)=\cons(q_2)\cons(q_2,s(2(k-1)^-))\cons(q(2(k-1)),q_1-q(2(k-1))),
		$$
		and thus
		$$
		\left(\cons(q_1)\leftarrow \overline{R_1} \right)=(\cons(q_1)\leftarrow \cons(\overline{q(2(k-1))},s(2(k-1)^-))=\cons(q_2)\cons(q(2(k-1)),q_1-q(2(k-1))).
		$$
		Similarly we obtain
		$$
		\left(\cons(q_1)\leftarrow \overline{R_1}\cdots \overline{R_k}\right)=\cons(q_{k+1})\prod_{i=0}^{k-1}\cons(q(k+1-i)i),q_{k-i}-q((k+1-i)i).
		$$
		Putting together these calculations, we obtain
		$$
		[\omega_k\omega_{k+1}]=\prod_{i=0}^k\rho((k+1-i)i)\cons(q_k)\prod_{i=1}^k\cons(q(k+1-i)i),q_{k-i}-q((k+1-i)i).
		$$
		We can then see that $T'$ is such that $\omega(T')=\omega(Q)[\omega_k\omega_{k+1}]$, hence $T'\in\mathcal{GT}_{k+1}(n)$.
	\end{proof}
	
	\noindent We note here a special case of the previous result.
	
	\begin{cor}\label{cor:normal form of C}
		Let $T$ and $Q$ be as in Proposition \ref{prop:third dude}. Assume that $\omega_{k+1}(T)$ contains no barred letters. Then
		$$
		\omega(T')=[\omega(T)]=\prod_{i=0}^{k-1}\omega_i(Q)[\omega_k(Q)\omega_{k+1}(T)]=\prod_{i=0}^{k}\cons(q_{k+1-i})
		$$
	\end{cor}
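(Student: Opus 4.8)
The plan is to read the statement off Proposition~\ref{prop:third dude} and its proof. Since $Q$ is the $k$-th truncation of $T$ we have $\omega_l(T)=\omega_l(Q)$ for $l\le k$, and because $Q=T(a_i)_{i=1}^k$ is standard, Proposition~\ref{prop:standard C trees} gives $\omega_k(T)=\omega_k(Q)=\cons(a_1)$. Substituting $\omega_l(T)=\omega_l(Q)$ into the identity $\omega(T')=\prod_{l=0}^{k-1}\omega_l(T)\,[\omega_k(T)\omega_{k+1}(T)]$ of Proposition~\ref{prop:third dude} already yields the middle equality. Since $[\omega_k(Q)\omega_{k+1}(T)]$ is by construction plactically equivalent to $\omega_k(Q)\omega_{k+1}(T)$ in $Pl^\mathbb{N}(C_n)$, the word $\prod_{l=0}^{k-1}\omega_l(Q)\,[\omega_k(Q)\omega_{k+1}(T)]$ is congruent to $\omega(Q)\omega_{k+1}(T)=\omega(T)$, so $[\omega(T')]=[\omega(T)]$; thus, to obtain $\omega(T')=[\omega(T)]$, it suffices to show that $\omega(T')$ is itself in normal form.

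This is the point at which the hypothesis enters: $\omega_{k+1}(T)$ having no barred letters means $s(i(k+1-i)^-)=0$ for every $i$, so the barred blocks $\overline{R_i}=\rho(i(k+1-i)^-)$ occurring in the proof of Proposition~\ref{prop:third dude} are all empty and $q_i(T)=q(i(k+1-i))$. Re-running that proof's insertion computation with every $\overline{R_i}$ empty, all the barred steps collapse and $[\omega_k(Q)\omega_{k+1}(T)]=[\cons(a_1)\,\omega_{k+1}(T)]$ is computed purely by iterating the first part of Lemma~\ref{lemm: ref for n 2}; the outcome is an explicit word built from nested unbarred block columns, each starting at the letter $1$. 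Prepending the remaining level-readings $\omega_1(Q),\dots,\omega_{k-1}(Q)$ of the standard tree $Q$ (which are again nested block columns, as $a_1\ge\cdots\ge a_k$), one checks that successive columns of the resulting word are $\preceq$-comparable: in the absence of barred letters the column conditions \eqref{eq:ineq3} for $T$ at level $k+1$, combined with the identification $q_j(Q)=a_j$ from \eqref{eq:calc of q}, are exactly the inequalities $q_{j+1}(T)\le a_j$ and $q_1\ge q_2\ge\cdots\ge q_{k+1}$, and no $(a,b)$-configuration can occur among unbarred columns. Hence $\omega(T')$ is the reading of a symplectic tableau, i.e.\ it is in normal form, and $\omega(T')=[\omega(T)]$.

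For the final equality I would simplify the explicit word just produced using the block identity $\cons(a,b)\cons(a+b,c)=\cons(a,b+c)$ (as used in the proof of Proposition~\ref{prop:standard C trees}): along each nested chain the consecutive blocks recombine, and the word collapses to the reading $\prod_{i=0}^{k}\cons(q_{k+1-i})$ of the highest-weight tableau whose columns are $\cons(q_1)\preceq\cons(q_2)\preceq\cdots\preceq\cons(q_{k+1})$. The main obstacle will be the bookkeeping in the last two steps — verifying that the block recombination is exact and that the $C$-tree's column inequalities translate verbatim into the chain of $\preceq$-relations among the $\cons(q_j)$; this is routine but error-prone, and is most safely handled by quoting the insertion identities already established inside the proof of Proposition~\ref{prop:third dude} rather than recomputing them.
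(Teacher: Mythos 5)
Your first two reductions are sound: $\omega_l(T)=\omega_l(Q)$ for $l\le k$ gives the middle expression, and plactic congruence gives $[\omega(T')]=[\omega(T)]$. The gap is the step where you assert that $\omega(T')=\prod_{l}\omega_l(Q)\,[\omega_k(Q)\omega_{k+1}(T)]$ is already in normal form because "successive columns are $\preceq$-comparable." That is exactly where the argument breaks. Re-running the insertion from the proof of Proposition \ref{prop:third dude} with all barred blocks empty yields $[\omega_k(Q)\omega_{k+1}(T)]=u\cdot\cons(q_1)$ with $u=\prod_{i=0}^{k-1}\rho((k+1-i)i)=\cons(s(k{+}1))\cons(a_k,\cdot)\cdots\cons(a_2,\cdot)$. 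This column $u$ is not an initial block $\cons(q_2)$ unless the column-condition inequalities are saturated, and it need not satisfy $u\preceq\omega_{k-1}(Q)=\cons(a_2)$. Concretely, take $n\ge 4$, $k=2$, $Q=T(3,1)$ (so $\omega_1(Q)=1$, $\omega_2(Q)=123$) and level-$3$ labels $s(3)=0$, $s(21)=2$, $s(12)=1$; then $\omega_3(T)=234$ is unbarred and one checks that the column and admissibility conditions hold. Here $[\omega_2(Q)\omega_3(T)]=[(123)(234)]=(23)(1234)$, so the word in question is $(1)(23)(1234)$. Since $(23)\not\preceq(1)$ this is not normal: one further rewriting step $(1)(23)\Longrightarrow\epsilon(123)$ is required, and the true normal form is $[\omega(T)]=\epsilon(123)(1234)=\cons(q_3)\cons(q_2)\cons(q_1)$. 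So the chain of equalities you are trying to establish fails as an identity of words in $\adm^*$ for this $T$, and no argument of the form "the word is already standard" can close it.

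The same example shows the final step of your plan cannot work as described: the identity $\cons(a,b)\cons(a+b,c)=\cons(a,b+c)$ recombines consecutive blocks \emph{inside one column}, whereas here the blocks that would need to merge (the $\cons(a_2)$ sitting in $\omega_{k-1}(Q)$ and the $\cons(a_2,\cdot)$ sitting inside $u$) lie in different columns of the word, and fusing them is a genuine plactic rewriting step, not a notational contraction. To prove a correct version you either need an additional hypothesis forcing $u=\cons(q_2)$ (equivalently, forcing each $q((k+1-i)i)$ to equal $q((k-i)i^-)$, as happens for the specific trees $T'''$ to which the corollary is applied in the proof of Theorem \ref{thm:normalofC}), or you must weaken the equalities to congruences and account for the extra insertions propagating leftward through the standard part.
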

	
	\noindent We are now ready to prove Theorem \ref{thm:normalofC}.
	
	\begin{proof}[Proof of Theorem \ref{thm:normalofC}]
		We prove it by induction on $k$. For $k=1$ we have
		$$
		T = \ \stackrel{p}{\bullet},
		$$
		for some $p\leq n$. The reading of $T$ is $\omega(T)=\cons(p)=12\cdots p$. We note that this is standard, and that $q_0(T)=p$, thus indeed we have
		$$
		[\omega(T)]=\omega(T)=\cons(p)=\cons(q_0).
		$$
		We illustrate the proof for $k=2$ as well. Let $T\in\mathcal{GT}_2(n)$. We have that $T$ is of the form
		$$
		\begin{tikzpicture}
		\draw[-] (-0.25,0)--(-1,-1);
		\draw[-] (-0.25,0)--(1,-1);
		\draw[-] (-1,-1)--(-0.25,-1);
		
		\node at (-.25,0) {\small{$\bullet$}};
		\node[above] at (-.25,0) {\small{$p$}};
		
		\node at (1,-1) {\small{$\bullet$}};
		\node[right] at (1,-1) {\small{$a$}};
		
		\node at (-1,-1) {\small{$\bullet$}};
		\node[left] at (-1,-1) {\small{$b$}};
		
		\node at (-.25,-1) {\small{$\bullet$}};
		\node[right] at (-.25,-1) {\small{$c$}};
		
		\node at (-2,-.5) {\small{$T=$}};
		\end{tikzpicture}
		$$
		for some $p,a,b,c\leq n$ that satisfy the column and admissibility conditions. The reading of $T$ is then
		$$
		\omega(T)=\cons(p)\cons(a)\cons(p;b,c)=c_1c_2.
		$$
		Then the normal form of $\omega(T)$ is $[\omega(T)]=(c_1\leftarrow c_2)$. We note that the admissibility condition asserts that $a+c\leq p$, hence we have $a\leq p$, thus $(\cons(p)\leftarrow \cons(a))=\cons(a)\cons(p)$. Thus we have
		$$
		[\omega(T)]=\cons(a)\left(\cons(p)\leftarrow \cons(p;b,c)\right)=\cons(a)\left(\cons(p+b\leftarrow\cons(\overline{p+b},c)\right)=\cons(a)\cons(p+b-c)=\cons(q_1(T))\cons(q_0(T)).
		$$
		Assume now that the statement of the theorem holds for $k$. Let $T\in\mathcal{GT}_{k+1}(n)$, and denote by $T_0$ its $k-$truncation, and let $Q=T(q_i(T_0))_{i=1}^k$. By induction hypothesis, we have that $[\omega(T)]=\omega(Q)$. Let $T'$ be the tree as in Proposition \ref{prop:helper2 for normal}. Then for the congruence of the $\mathbb{N}-$decorated plactic monoid $\equiv_\epsilon$ on $\adm^*$ we have 
		\begin{equation}\label{eq:111}
		\omega(T)=\omega(T_0)\omega_{k+1}(T)\equiv_\epsilon \omega(Q)\omega_{k+1}(T)=\omega(T').
		\end{equation}
		
		\noindent The $C-$tree $T'$ satisfies the conditions of Proposition \ref{prop:third dude}. Let now $T''$ be the $C-$tree defined as in that proposition. We then have
		
		\begin{equation}\label{eq:222}
		\omega(T')=\prod_{l=1}^{k+1}\omega_l(T)\equiv_\epsilon\prod_{l=1}^{k-1}\omega_l(T')[\omega_k(T)\omega_{k+1}(T)]=\omega(T'').
		\end{equation}
		
		\noindent We note that $T''$ is such that $\omega(T'')$ contains no barred letters. Indeed, in Proposition \ref{prop:third dude} we can see that $s(v)=0$ for any $v=ij^-$. Let $T''_0$ be the $k-$truncation of $T''$, and let $Q''$ be its standard form, and let $T'''$ be as in Proposition \ref{prop:helper2 for normal}. By induction hypothesis we have
		
		\begin{equation}\label{eq:333}
		\omega(T'')=\omega(T''_0)\omega_{k+1}(T'')\equiv_\epsilon \omega(Q'')\omega_{k+1}(T'')=\omega(T''').
		\end{equation}
		
		\noindent As the last level of $T'''$ is the same as that of $T''$, we have that $T'''$ contains no barred letters. Moreover, the $k-$truncation of $T'''$ is standard, hence by Corollary \ref{cor:normal form of C}, we have a $C-$tree $T''''$ such that
		\begin{equation}\label{eq:444}
		\omega(T'''')=[\omega(T''')]=\prod_{i=1}^{k+1}\cons(q_{k+1-i}(T''')).
		\end{equation}
		From \eqref{eq:111},\eqref{eq:222},\eqref{eq:333} we obtain
		$
		\omega(T)\equiv_\epsilon \omega(T')\equiv_\epsilon \omega(T'')\equiv_\epsilon \omega(T''').
		$, and since $T'$, $T''$, and $T'''$ were produced via Propositions \ref{prop:helper2 for normal}, \ref{prop:third dude}, we have that
		$
		q_i(T''')=q_i(T'')=q_i(T')=q_i(T),
		$
		hence we have
		$
		[\omega(T)]=[\omega(T''')]=\prod_{i=1}^{k+1}\cons(q_{k+1-i}),
		$ and we obtain
		$$
		[\omega(T)]=\prod_{i=1}^{k+1}\cons(q_{k+1-i})
		$$
		which is what we wanted to show. The statement of the Theorem then follows by induction on $k$.
	\end{proof}
	
	\noindent Given a $C-$tree $T\in\mathcal{GT}_k(n)$, we call $T(q_i(T))_{i=1}^k$ its \textit{normal form}.
	
	\smallskip
	\noindent Theorem \ref{thm:normalofC} also shows what kind of Kashiwara operators act on $\omega(T)$ for $T\in\mathcal{GT}(n)$.
	
	\begin{cor}\label{cor:action of f}
		Let $T\in\mathcal{GT}(n)$. Then $f_i.\omega(T)$ exists if and only if $i=q_j(T)$ for some $0\leq j \leq \text{rank}(T)$.
	\end{cor}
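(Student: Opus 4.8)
The plan is to transport the problem to the normal form $[\omega(T)]$, which by Theorem~\ref{thm:normalofC} is an explicit product of block columns, and then to decide exactly when a Kashiwara operator can act on such a product. By the definition of $\varphi_i$ and by Corollary~\ref{cor:kash+normal}, $f_i.\omega(T)$ is defined if and only if $f_i.[\omega(T)]$ is defined, which in turn holds if and only if $\varphi_i([\omega(T)])>0$. By Theorem~\ref{thm:normalofC} (with the convention $\cons(0)=\epsilon$),
$$
[\omega(T)]=\prod_{i=0}^{k-1}\cons(q_{k-i})=\cons(q_k)\cons(q_{k-1})\cdots\cons(q_1),\qquad k=\text{rank}(T),
$$
so everything reduces to computing $\varphi_i$ of a product of blocks $\cons(b)$; Proposition~\ref{prop:perties of new cr} identifies $\varphi_i$ on $\adm^*$ with $\varphi_i$ on the underlying words of $C_n$, so this may be done inside the crystal $C_n$.

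The main input is a local computation: for every $b$ with $0\le b\le n$ and every $i\in\{1,\dots,n\}$ one has $\varepsilon_i(\cons(b))=0$ and $\varphi_i(\cons(b))=\delta_{i,b}$. Indeed $\cons(b)=12\cdots b$ has only unbarred letters, so for a label $i<n$ the only letters affected by $e_i,f_i$ are $i$ (with $\varphi_i(i)=1$, $\varepsilon_i(i)=0$) and $i+1$ (with $\varphi_i(i+1)=0$, $\varepsilon_i(i+1)=1$), whereas the letters $\overline{i+1},\overline{i}$ that would supply a second $i$-arrow never occur; for $i=n$ only $n$ is relevant and $\overline{n}$ is absent. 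Feeding this into the inductive rules of Definition~\ref{def:crystal graph}, one gets: if $b<i$ nothing relevant happens; if $b=i$ only the single letter $i$ contributes, so $\varphi_i=1$ and $\varepsilon_i=0$; if $b>i$ the adjacent pair $i\,(i{+}1)$ forces $\varphi_i(\cons(b))=\varepsilon_i(\cons(b))=0$. The case $b=0$ (so $\cons(0)=\epsilon$) is trivial.

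To finish, I would show by induction on $m$ that for any product of blocks $w=\cons(b_m)\cdots\cons(b_1)$ one has $\varepsilon_i(w)=0$ and $\varphi_i(w)=\#\{\,j : b_j=i\,\}$. Writing $w=\cons(b_m)\cdot w'$ and substituting $\varepsilon_i(w')=0$ into the standard formulas $\varepsilon_i(uv)=\varepsilon_i(u)+\max(0,\varepsilon_i(v)-\varphi_i(u))$ and $\varphi_i(uv)=\varphi_i(v)+\max(0,\varphi_i(u)-\varepsilon_i(v))$ implicit in Definition~\ref{def:crystal graph}, the $\max$ terms collapse and give $\varepsilon_i(w)=0$ and $\varphi_i(w)=\varphi_i(w')+\varphi_i(\cons(b_m))$, closing the induction by the previous paragraph (this also re-confirms $\varepsilon_i(w)=0$, i.e. that $w$ is of highest weight, consistently with Corollaries~\ref{cor:HW trees} and~\ref{cor:hw decorated}). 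Taking $w=[\omega(T)]$ yields $\varphi_i([\omega(T)])=\#\{\,j : q_j(T)=i\,\}$, which is positive exactly when $i=q_j(T)$ for some $j$ (the boundary value $q_0(T)$ equals $0$, so it matches no label $i\in\{1,\dots,n\}$ and its inclusion in the index range is harmless); combined with the first paragraph this is the claim. The only genuinely delicate point is the local computation of the middle paragraph, where one must notice that it is precisely the absence of barred letters in $\cons(b)$ that kills the potential second $i$-arrow, so that $\varepsilon_i(\cons(b))$ truly vanishes; everything else is a routine induction on the tensor-product rule on top of the reduction to the normal form.
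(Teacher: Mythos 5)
Your proof is correct and follows essentially the route the paper intends: the paper states this corollary as an immediate consequence of Theorem~\ref{thm:normalofC} (via Corollary~\ref{cor:kash+normal} to pass to the normal form), and your local computation $\varphi_i(\cons(b))=\delta_{i,b}$, $\varepsilon_i(\cons(b))=0$ together with the tensor-product induction is exactly the implicit step being filled in. (Your aside that $q_0(T)=0$ does not match the paper's own $k=1,2$ computations, but this only touches the paper's internally inconsistent indexing of the $q_j$ and does not affect the substance of the argument.)
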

	\noindent We now note a few more consequences of this result.
	
	\smallskip
	\noindent Given a $C-$tree $T\in\mathcal{GT}(n)$, we see that the normal form of $\omega(T)$ is entirely determined by the values of $q_i(T)$. In particular, we have
	\begin{cor}\label{cor:qi are preserved}
		Let $T_1,T_2\in\mathcal{GT}(n)$. Then
		$$
		[\omega(T_1)]=[\omega(T_2)]\quad \Longleftrightarrow\quad  q_i(T_1)=q_i(T_2)
		$$
		for all $i=0,1,\cdots,k$.
	\end{cor}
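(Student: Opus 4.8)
The plan is to read the corollary directly off Theorem~\ref{thm:normalofC} together with the uniqueness of normal forms in the convergent presentation \pocol. Recall that, by Theorem~\ref{thm:conv poly}, every element of $Pl^\mathbb{N}(C_n)$ has a unique normal form word in $\adm^*$, and that for $w\in\adm^*$ the symbol $[w]$ denotes exactly this word; hence for $T_1,T_2\in\mathcal{GT}(n)$ the equality $[\omega(T_1)]=[\omega(T_2)]$ in $Pl^\mathbb{N}(C_n)$ means precisely that $\omega(T_1)$ and $\omega(T_2)$ have the same normal form word in the free monoid $\adm^*$.

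Next I would substitute the formula of Theorem~\ref{thm:normalofC}: writing $k_j$ for the rank of $T_j$, we have
$$
[\omega(T_j)]=\prod_{i=0}^{k_j-1}\cons\!\big(q_{k_j-i}(T_j)\big).
$$
The point to exploit is that each factor $\cons(a)$ is a \emph{single letter} of $\adm$ — the admissible column $12\cdots a$ if $a\geq 1$, and the generator $\epsilon$ if $a=0$ — so the displayed right-hand side is literally a word of length $k_j$ over the alphabet $\adm$. Comparing the two normal form words letter by letter therefore forces $k_1=k_2=:k$ and $\cons(q_{k-i}(T_1))=\cons(q_{k-i}(T_2))$ for each $i$; since $a\mapsto\cons(a)$ is injective on $\{0,1,\dots,n\}$ (the columns $12\cdots a$ are pairwise distinct and none equals $\epsilon$), this gives $q_i(T_1)=q_i(T_2)$ for all $i$. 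Conversely, if $q_i(T_1)=q_i(T_2)$ for all $i$ — in particular $T_1$ and $T_2$ then have the same rank $k$ — Theorem~\ref{thm:normalofC} gives at once that $\prod_{i=0}^{k-1}\cons(q_{k-i}(T_1))$ and $\prod_{i=0}^{k-1}\cons(q_{k-i}(T_2))$ are the same word, so $[\omega(T_1)]=[\omega(T_2)]$.

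There is no genuine obstacle here; the entire analytic content has already been absorbed into Theorem~\ref{thm:normalofC}. The only things needing a moment's care are bookkeeping: verifying that $\cons(a)$ really is one generator of $\adm$ for each admissible $a$ (so that the two normal forms are compared letter-by-letter and not merely up to the relations of \pocol), and noting that a $C$-tree of rank $k$ yields a normal form word of length exactly $k$, which is what lets us conclude $k_1=k_2$ and keeps the comparison honest even when some of the $q_i$ vanish. If one prefers to avoid the explicit length count, the same conclusion follows by feeding $[\omega(T_j)]$ — a highest weight element by Corollary~\ref{cor:HW trees} — into Corollary~\ref{cor:hw decorated}: a highest weight element of $Pl^\mathbb{N}(C_n)$ has a unique expression $\prod_i\cons(a_i)$ with $a_1\le a_2\le\cdots$ (uniqueness by Corollary~\ref{cor:expressing the new monoid}), and Theorem~\ref{thm:normalofC} identifies that expression with $\prod_{i=0}^{k-1}\cons(q_{k-i})$, so the sequence of values $q_i(T)$ is an invariant of $[\omega(T)]$.
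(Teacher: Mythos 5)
Your proof is correct and follows the same route as the paper, which derives this corollary as an immediate consequence of Theorem~\ref{thm:normalofC} (the paper gives no further argument beyond observing that the normal form of $\omega(T)$ is determined by the $q_i(T)$). Your extra bookkeeping — that each $\cons(a)$ is a single generator of $\adm$ (with $\cons(0)=\epsilon$), that rewriting in \pocol\ preserves length so the normal form word has length exactly the rank, and hence that the ranks must agree — is exactly the right way to make the letter-by-letter comparison rigorous.
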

	
	\noindent Given a $C-$tree $T\in\mathcal{GT}_k(n)$, we say that $T'\in\mathcal{GT}_l(n)$ is a \textit{sub C-tree} of $T$ if $l\leq k$, and there exists $1\leq r\leq k-l$ such that 
	$$
	s'(ij^\pm)=s((r+i)j^\pm) \quad \text{for all } v=ij^\pm\ \text{with } i+j\leq l.
	$$
	In other words, the $C-$tree $T$ contains a copy of $T'$ with root at some vertex $r0$.
	
	\noindent Let now $T\in\mathcal{GT}(n)$, and $T'$ a sub C-tree of $T$. Let $Q$ be the normal form of $T'$, and let $T_1$ be the $C-$tree obtained when replacing $T'$ by $Q$ in $T$. This will indeed be a $C-$tree, and since $q_i(Q)=q_i(T')$, we have that $q_i(T)=q_i(T_1)$. In particular, we have $\omega(T)\equiv_\epsilon \omega(T_1)$. We note this in the following
	
	\begin{cor}
		Let $T'\subset T$ be $C-$trees, and let $T_1$ be the $C-$tree obtained when replacing $T'$ with its standard C-tree $R$. Then $T_1$ is indeed a $C-$tree, and $[\omega(T)]=[\omega(T_1)]$.
	\end{cor}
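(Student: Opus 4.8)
The plan is to carry out the same kind of surgery as in the proofs of Propositions \ref{prop:helper2 for normal} and \ref{prop:third dude}, where a truncation was replaced by a standard $C$-tree: the conditions defining membership in $\mathcal{GT}(n)$ --- the $n$-labeling, column \eqref{eq:ineq3}, and admissibility \eqref{ineq:admissibility} conditions --- are local, and passing from $T'$ to its standard $C$-tree $R$ alters the valuation $q$ only on the triangular block $\mathcal{R}$ of $T$ occupied by $T'$, leaving untouched the valuations at the lower inner boundary of $\mathcal{R}$. Writing $l=\text{rank}(T')$ and letting $r$ be the shift placing $T'$ inside $T$ (so strand $i$ of $T'$ sits on the initial segment of strand $r+i$ of $T$), those boundary vertices are $(r+i)(l-i)^-$, and they are precisely the vertices carrying the values $q_1,\dots,q_l$ of the sub-$C$-tree.

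First I would record the two inputs. Put $a_i=q_i(T')$. Theorem \ref{thm:normalofC} gives $[\omega(T')]=\prod_{i=0}^{l-1}\cons(a_{l-i})$, so by Corollary \ref{cor:hw decorated} $a_1\geq\cdots\geq a_l\geq 0$, and hence $R:=T(a_i)_{i=1}^{l}$ is a well-defined standard $C$-tree in $\mathcal{GT}_l(n)$ by Proposition \ref{prop:standard C trees}; moreover the computation $q(ij^\pm)=a_{l-j}$ in that proof, at $j=l-i$, yields $q_i(R)=a_i=q_i(T')$. Since strand $r+i$ of $T$ restricted to levels $\le r+l$ is strand $i$ of $T'$, we get $q_T\bigl((r+i)(l-i)^-\bigr)=q_i(T')=q_i(R)$. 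Consequently $q_{T_1}$ and $q_T$ coincide at each inner boundary vertex of $\mathcal{R}$, hence also at every vertex strictly below $\mathcal{R}$ (the valuation there is that inner boundary value plus a contribution from the unchanged labels below) and trivially on every strand disjoint from $\mathcal{R}$, whereas inside $\mathcal{R}$ itself $q_{T_1}$ agrees with $q_R$ transported by the shift.

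With these comparisons I would then verify $T_1\in\mathcal{GT}_k(n)$ by inspecting \eqref{eq:ineq3}, \eqref{ineq:admissibility}, and $0\le q\le n$ according to which vertices occur: an instance confined to $\mathcal{R}$ holds because $R\in\mathcal{GT}_l(n)$ (Proposition \ref{prop:standard C trees}), and an instance not touching the interior of $\mathcal{R}$ holds because $T\in\mathcal{GT}_k(n)$ together with the matching of $q_{T_1}$ and $q_T$ above. The remaining --- and in my view the only genuinely new --- case is a column or admissibility inequality whose window straddles $\mathcal{R}$, i.e.\ involves both the strand $r+1$ of $\mathcal{R}$ and the adjacent strands of index $\le r$ outside it; here the region columns carry $R$'s (standard) labels while the rest carry $T$'s, and one must bound the replaced contribution against the original one, which I expect to reduce to monotonicity/positivity facts already forced by the conditions on $T'$ (for instance $s(t0)\leq s((t-1)0)$, so strand-root labels are weakly decreasing). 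This case analysis, rather than any conceptual point, is the main obstacle. Once it is complete, applying the same inner boundary comparison at level $k$ gives $q_i(T_1)=q_i(T)$ for all $0\le i\le k$, and Corollary \ref{cor:qi are preserved} then delivers $[\omega(T)]=[\omega(T_1)]$.
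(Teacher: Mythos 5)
Your treatment of the second claim is exactly the paper's: $q_i(R)=q_i(T')$ forces $q_{T_1}$ and $q_T$ to agree on the lower boundary of the replaced region and hence everywhere outside it, so $q_i(T_1)=q_i(T)$ and Corollary \ref{cor:qi are preserved} gives $[\omega(T)]=[\omega(T_1)]$. The paper's justification (the paragraph preceding the corollary) contains nothing beyond this and simply asserts that $T_1$ is a $C$-tree, so your instinct that the straddling windows are ``the main obstacle'' is correct --- but the reduction you anticipate does not go through, and this is a genuine gap rather than a tedious case check. For an admissibility window \eqref{ineq:admissibility} at level $t=r+u$ reaching past the left edge of the region, one needs the replaced contribution to be dominated by the original one, i.e.
\begin{equation*}
\sum_{\mathrm{lev}(v)=u}\bigl(s_R(v)+s_R(v^-)\bigr)\;\leq\;\sum_{\mathrm{lev}(v)=u}\bigl(s_{T'}(v)+s_{T'}(v^-)\bigr)=|\omega_u(T')|.
\end{equation*}
But the left-hand side equals $a_{l-u+1}=q_{l-u+1}(T')$, the length of the $u$-th column of $[\omega(T')]$, and by Lemma \ref{lem:2collem}/Lemma \ref{lemm: ref for n 2} normalization can lengthen a column: already for $\omega(T')=\cons(3)\,\cons(1)$ one has $[\omega(T')]=\cons(1)\,\cons(3)$, so the second level grows from $1$ letter to $3$. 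No monotonicity of the labels of $T'$ (such as $s(t0)\leq s((t-1)0)$) can repair this.

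Worse, embedding this $T'$ tightly shows the statement itself fails as written. Take $n=5$, $k=3$, and the labeling $s(1)=3$, $s(2)=3$, $s(3)=1$, $s(12)=s(12^-)=2$, all other labels $0$. One checks that all $n$-labeling, column \eqref{eq:ineq3} and admissibility \eqref{ineq:admissibility} conditions hold (the level-$3$ admissibility inequality is $1+2+2=5\leq q(12)=5$, an equality), with $\omega(T)=(123)(123)(145\overline{5}\,\overline{4})$. The sub-$C$-tree on strands $2,3$ (i.e.\ $r=1$, $l=2$) is the $T'$ above, with standard form $R=T(3,1)$ carrying $s_R(11)=2$. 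Replacing $T'$ by $R$ puts the label $2$ at the vertex $21$ of $T_1$, and the third level of $T_1$ reads $\cons(1)\cons(1,2)\cons(3,2)\cons(\overline{5},2)=12345\overline{5}\,\overline{4}$, which has $N_5=7>5$ and is not an admissible column; equivalently the level-$3$ admissibility inequality for $T_1$ reads $7\leq 5$. So $T_1\notin\mathcal{GT}_3(5)$. Either the corollary needs an additional hypothesis (or a different notion of replacement that also adjusts the complementary labels), or a different argument; as it stands, neither your proposal nor the paper establishes that $T_1$ is a $C$-tree, and the example above indicates that it need not be one.
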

	
	\subsection{Constructing $C-$trees from highest weight words}\label{sub:Treesfromwords}
	\noindent In the previous subsection we established where the image of the map $\omega:\mathcal{GT}(n)\longrightarrow \adm^*$ is. In particular, if by $HW^n$ we denote the set of highest weights in $\adm^*$, we have seen that $\omega(T)\in HW^n$. Thus we can consider $w$ as a map $\omega:\mathcal{GT}(n)\longrightarrow HW^n$. Here we will show that this map is surjective. We do this by constructing a map $\mathcal{T}:HW^n\longrightarrow \mathcal{GT}(n)$ such that $\omega(\mathcal{T}(u))=u$ for any $u\in HW^n$. We construct this map inductively on the length of elements of $HW^n$. Denote by $HW_k^n$ the subset of $HW^n$ consisting of words of length $k$.
	
	\smallskip
	\noindent Recall the following result,.
	\begin{lemm}(\cite{lecouvey2002schensted})\label{lem:Lecouvey}
		Let $w_1,w_2\in C_n^*$. The word $w_1w_2$ is of highest weight if and only if
		\begin{itemize}
			\item $w_1$ is a word of highest weight
			\item $\varepsilon_i(w_2)\leq \varphi_i(w_1)$ for all $i=1,2,\dots,n$.
		\end{itemize}
	\end{lemm}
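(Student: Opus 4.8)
The plan is to prove the lemma as a general consequence of the signature rule for the Kashiwara operators recalled in Section~\ref{sub:crystalbases and monoids}. The key reformulation is that a word $u\in C_n^*$ is of highest weight if and only if $\varepsilon_i(u)=0$ for all $i=1,\dots,n$: indeed $e_i.u$ is undefined for every $i$ exactly when $u$ has no incoming edge in $\Gamma_{C_n}$, and since the connected component $B(u)$ is finite and, by the classical theory \cite{kashiwara1991crystal}, is the crystal $B(\lambda)$ of a dominant weight $\lambda$, it has a unique source, namely $u^0$; hence $u=u^0$ iff $\varepsilon_i(u)=0$ for all $i$.

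Next I would compute $\varepsilon_i(w_1w_2)$ from the invariants of $w_1$ and $w_2$. Writing the reduced forms $\rho_i(w_j)=-^{p_j}+^{q_j}$ with $p_j=\varepsilon_i(w_j)$ and $q_j=\varphi_i(w_j)$ for $j=1,2$, the word $\rho_i(w_1w_2)$ is obtained by reducing $-^{p_1}+^{q_1}-^{p_2}+^{q_2}$; since the rewriting $+-\mapsto\varepsilon$ is confluent, only the interface factor $+^{q_1}-^{p_2}$ is affected, leaving $-^{p_1}+^{q_1-p_2+q_2}$ when $q_1\ge p_2$ and $-^{\,p_1+p_2-q_1}+^{q_2}$ otherwise. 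In both cases one reads off
$$
\varepsilon_i(w_1w_2)=\varepsilon_i(w_1)+\max\bigl(0,\ \varepsilon_i(w_2)-\varphi_i(w_1)\bigr).
$$

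Combining the two steps, $w_1w_2$ is of highest weight iff $\varepsilon_i(w_1w_2)=0$ for all $i$, and by the displayed identity this sum of nonnegative integers vanishes iff $\varepsilon_i(w_1)=0$ and $\varepsilon_i(w_2)\le\varphi_i(w_1)$ for every $i$; that is, iff $w_1$ is of highest weight and $\varepsilon_i(w_2)\le\varphi_i(w_1)$ for all $i$, which is the claim. I expect the only delicate point to be the first step --- identifying the purely combinatorial condition $\varepsilon_i(u)=0$ (for all $i$) with $u$ being the genuine highest-weight vertex of its component --- which hinges on the structural description of the connected components of $\Gamma_{C_n}$ as Kashiwara--Nakashima crystals $B(\lambda)$; the interface cancellation in the second step is routine once confluence of the $+-$ reduction is invoked.
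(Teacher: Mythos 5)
Your argument is correct, but note that the paper does not prove this statement at all: it is imported verbatim from Lecouvey \cite{lecouvey2002schensted} as a black box. What you have written is a self-contained derivation via the signature rule, and both steps check out. The tensor-product identity
$$
\varepsilon_i(w_1w_2)=\varepsilon_i(w_1)+\max\bigl(0,\ \varepsilon_i(w_2)-\varphi_i(w_1)\bigr)
$$
follows exactly as you say from reducing $-^{p_1}+^{q_1}-^{p_2}+^{q_2}$ in $\mathcal{M}=M^*/\langle +-=\varepsilon\rangle$, where only the interface block cancels; and the equivalence ``$u$ is of highest weight iff $\varepsilon_i(u)=0$ for all $i$'' is either the definition (if highest weight means no $e_i$ applies) or, if highest weight is taken to mean $u=u^0$ for the distinguished source of $B(u)$ as the paper's notation suggests, requires precisely the uniqueness of the source of each connected component, which you correctly identify as the one structural input (the components of $\Gamma_{C_n}$ being Kashiwara--Nakashima crystals $B(\lambda)$). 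The only point worth making explicit is that the identification $p=\varepsilon_i(w)$, $q=\varphi_i(w)$ for the reduced signature $-^p+^q$ is obtained by iterating the paper's recipe for a single application of $e_i$ and $f_i$; once that is recorded, your proof is complete and is the standard one, so it buys a proof where the paper only offers a citation.
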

	\begin{remark}
		From our discussion in \ref{sub:crystal structure on ADM}, this result can be adapted to $w_1,w_2\in\adm^*$.
	\end{remark}
	\noindent For $k=1$, by Corollary \ref{cor:hw decorated} we have $c\in HW_1^n$ if and only if $c=\cons(p)$ for some $p\leq n$. So we have
	$
	HW_1^n=\{\cons(p)\ |\ p\leq n\},
	$
	and we define a map $\mathcal{T}_1:HW_1^n\longrightarrow \mathcal{GT}_1(n)$ given by
	$$
	\cons(p) \longmapsto \ \stackrel{p}{\small{\bullet}},
	$$
	and clearly $\omega(\mathcal{T}_1(c))=c$ for all $c\in HW_1^n$.
	
	\smallskip
	\noindent Suppose now that we have established a map $\mathcal{T}_k:HW_k^n\longrightarrow \mathcal{GT}_k(n)$ such that $\omega(\mathcal{T}_k(u))=u$. We now construct a map $\mathcal{T}_{k+1}:HW_{k+1}^n\longrightarrow \mathcal{GT}_{k+1}(n)$ by utilizing this one.
	
	\smallskip
	\noindent Let $u=c_1\cdots c_{k+1}\in HW_{k+1}^n$. Set $t=c_1\cdots c_k$, and by induction hypothesis, let $T=(\Gamma_C,s)\in\mathcal{GT}_k(n)$ be such that $\omega(\mathcal{T}(t))=t$.
	
	\smallskip
	\noindent Let now $l=|g(c_{k+1})|$ in $C_n^*$. If $l=0$, i.e. $c_{k+1}=\epsilon$, then we set $\mathcal{T}_{k+1}(u)$ to be the $C-$tree $T'\in\mathcal{GT}_{k+1}(n)$ whose $k-$truncation is $T$, and its $k+1$th level has all vertex labels equal to $0$. Suppose now that $l=1$, i.e. $c_{k+1}=x$ for some $x\in C_n$. By Lemma \ref{lem:Lecouvey}, we see that
	$
	\varepsilon_i(x)\leq \varphi_i(t)\ \text{for } i=1,\dots,n.
	$
	As $x$ is a one-letter word, we see that $\varepsilon_i(x)\in\{0,1\}$. By Corollary \ref{cor:action of f}, we see that $\varphi_i(v)>0$ only if $i=q_r(T)=q_r$ for some $0\leq r \leq k$. Thus we have $\varepsilon_i(x)>0$ only if $i=q_r$ for some $0\leq r \leq k$. This means that the $x\in C_n$ which satisfy the conditions of Lemma \ref{lem:Lecouvey} are
	$
	x\in \{1,q_r+1,\overline{q_r}\}.
	$ We then define $\mathcal{T}_{k+1}=(\Gamma_C,s')$ by setting $s'(v)=s(v)$ for $\text{lev}(v)\leq k$, and
	\begin{itemize}
		
		\item[1.] if $x=q_r+1$, and $r$ is minimal with this property, then we set $s'(r(k+1-r))=1$,
		\item[2.] if $x=\overline{q_r}$ and $r$ is minimal with this property, then we set $s'(r(k+1-r)^-)=1$,
		\item[3.] if $x=1$ and $q_r\neq 0$, then we set $s'((k+1)0)=1$.
	\end{itemize}
	
	\noindent and $s'(v)=0$ for all other vertices. The fact that $T'\in\mathcal{GT}(n)$ follows from our minimal choice of $r$. Moreover by construction we have $\omega(\mathcal{T}_{k+1})=u$.
	
	\smallskip
	\noindent We now construct the map $\mathcal{T}_{k+1}:HW_{k+1}^n\longrightarrow \mathcal{GT}(n)$ in full by induction on the length $l=|g(c_{k+1})|$ in $C_n^*$. Indeed, say that we have $g(c_{k+1})=dx$, and that $g(w)=g(t)x$.
	Again by Lemma \ref{lem:Lecouvey} we have that $g(t)$ is of highest weight, and $
	\varepsilon_i(x)\leq \varphi_i(g(t))\ \text{for } i=1,\dots,n$. By induction hypothesis on $l=|g(c)|$, since $|g(d)|<|g(c)|$ there exists a $C-$tree $\mathcal{T}_{k+1}(t)\in\mathcal{GT}_{k+1}(n)$ with valuation $s$ such that $\omega(T_{k+1}(t))=t$. Using the same argument as in the case for $|g(c)|=1$, since here $x\neq 1$, we have that 
	$
	x\in\{q_r+1,\overline{q_r}\}
	$
	for some $0\leq r \leq k$.
	Again we apply the same recipe by defining a $C-$tree $\mathcal{T}_{k+1}(w)=(\Gamma_C,s')$ by  setting $s'(v)=s(v)$ for if $s(v)\neq 0$, and
	\begin{itemize}
		\item[1.] if $x=q_r+1$, and r is maximal with this property, then we set $s'(r(k+1-r))=s(r(k+1-r))+1$
		\item[2.] if $x=\overline{q_r}$, and r is minimal with this property, then we set $s'(r(k+1-r)^-)=s(r(k+1-r))+1$
	\end{itemize}
	
	\noindent By construction we have that indeed $\omega(\mathcal{T}_{k+1})(w)=w$.
	
	\smallskip
	\noindent We then define $\mathcal{T}:HW^n\longrightarrow\mathcal{GT}(n)$ by setting $\mathcal{T}(w)=\mathcal{T}_k(w)$ if $|w|=k$ in $\adm^*$. Thus we have proven the first part of the following
	
	\begin{thm}\label{thm:kindamain}
		The map $\mathcal{T}:HW^n\longrightarrow \mathcal{GT}(n)$ is such that
		$$
		\omega(\mathcal{T}(u))=u
		$$
		for all $u\in HW^n$. Moreover, $\mathcal{T}=\omega^{-1}$.
	\end{thm}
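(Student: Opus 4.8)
The equality $\omega(\mathcal{T}(u))=u$ for $u\in HW^n$ has in fact already been recorded during the inductive construction of $\mathcal{T}$, so the work that remains is the statement $\mathcal{T}=\omega^{-1}$. Since $\omega\circ\mathcal{T}=\mathrm{id}_{HW^n}$, the map $\mathcal{T}$ is injective and $\omega\colon\mathcal{GT}(n)\to HW^n$ is surjective (it does land in $HW^n$ by Corollary~\ref{cor:HW trees}). Thus it suffices to prove that $\omega$ is \emph{injective}: then $\omega$ is a bijection and $\mathcal{T}$, being its right inverse, is its two-sided inverse. The plan is therefore to show that $\omega(T_1)=\omega(T_2)$ forces $T_1=T_2$, arguing by induction on the rank.

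First, $|\omega(T)|=\mathrm{rank}(T)$ in $\adm^*$, because $\omega(T)=\prod_t\omega_t(T)$ and each $\omega_t(T)$ is a single letter of $\adm$ (an admissible column, or $\epsilon$); hence $\omega(T_1)=\omega(T_2)$ already forces $\mathrm{rank}(T_1)=\mathrm{rank}(T_2)=:k$. The base case $k=1$ is immediate, since then $\omega(T)=\cons(s(10))$ and $a\mapsto\cons(a)$ is injective. For the inductive step, write $\omega(T_1)=\omega(T_2)=c_1\cdots c_k$ and note that $\omega_t(T)$ depends only on vertices of level $\le t$, so $c_1\cdots c_{k-1}$ is the reading of the $(k-1)$-truncation of $T_i$; by the induction hypothesis these truncations agree, hence so do all valuations $q(v)$ with $\mathrm{lev}(v)\le k-1$. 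It then remains to recover the level-$k$ labels $s(v)$ from the single column $c_k$ together with this truncation data.

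For this I would exploit the block decomposition $\omega_k(T)=\prod_{l=0}^{k-1}\rho((k-l)l)\cdot\prod_{l=1}^{k-1}\rho(l(k-l)^-)$, each factor being a run of consecutive letters of $C_n$. An unbarred block $\rho((k-l)l)$ occupies the integers in $(\alpha_l,\alpha_l+s((k-l)l)]$ with $\alpha_l=q((k-l)(l-1)^-)$ (and $\alpha_0=0$) a valuation at level $k-1$, hence already known; the column conditions \eqref{eq:ineq3} say precisely that these intervals occur in weakly increasing, non-overlapping position inside $c_k$ (the top $\alpha_l+s((k-l)l)$ of one block is $\le$ the base $\alpha_{l+1}$ of the next). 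Consequently $s((k-l)l)$ equals the number of unbarred entries $z$ of $c_k$ with $\alpha_l<z\le\alpha_{l+1}$ — a count determined by $c_k$ and the $\alpha_l$ — and this recovers all unbarred level-$k$ labels. Feeding these back in, each anchor $q(l(k-l))=q(l(k-l-1)^-)+s(l(k-l))$ of a barred block $\rho(l(k-l)^-)$ is now known (first summand at level $k-1$, second just recovered), and the identical interval-counting argument applied to the barred part of $c_k$, using the remaining inequalities in \eqref{eq:ineq3}, yields the labels $s(l(k-l)^-)$. Hence the level-$k$ labelings of $T_1$ and $T_2$ coincide, completing the induction, so $\omega$ is injective and $\mathcal{T}=\omega^{-1}$.

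The main obstacle I anticipate is purely organisational: pinning down, for each block occurring in $\omega_k(T)$, exactly which positions of the column $c_k$ it occupies, and checking from the column inequalities that consecutive blocks are separated (or abut) precisely as needed, so that the counts above return the individual block lengths rather than sums of them. The two-pass structure — unbarred labels first, then the barred ones via anchors built from the unbarred data — is the delicate point, but with the notation of Section~\ref{sec:Ctrees} in hand it should be routine bookkeeping.
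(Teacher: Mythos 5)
Your proposal is correct and follows essentially the same route as the paper: reduce to injectivity of $\omega$ and prove it by induction on the rank, recovering the level-$k$ labels from the last column (unbarred blocks first, then barred ones via the anchors). The paper only carries out this induction explicitly at rank $2$ and asserts the general case "follows similarly"; your interval-counting argument via the column conditions is a faithful fleshing-out of that same step.
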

	\begin{proof}
		Since given any $u\in HW^n$ we have $\omega(\mathcal{T}(u))=u$, the map $w$ is surjective.
		
		\smallskip
		\noindent To show that $\mathcal{T}=w^{-1}$, it suffices to prove that $w$ is injective, which is proven by induction. We illustrate here the fact that $\omega:\mathcal{GT}_2(n)\longrightarrow \adm^*$ is injective. Indeed, let $T_1,T_2\in\mathcal{GT}_2(n)$ be such that $\omega(T_1)=\omega(T_2)$, i.e.
		
		\begin{center}
		\begin{tikzpicture}
		\draw[-] (-0.25,0)--(-1,-1);
		\draw[-] (-0.25,0)--(1,-1);
		\draw[-] (-1,-1)--(-0.25,-1);
		
		\node at (-.25,0) {\small{$\bullet$}};
		\node[above] at (-.25,0) {\small{$p_1$}};
		
		\node at (1,-1) {\small{$\bullet$}};
		\node[right] at (1,-1) {\small{$a_2$}};
		
		\node at (-1,-1) {\small{$\bullet$}};
		\node[left] at (-1,-1) {\small{$b_2$}};
		
		\node at (-.25,-1) {\small{$\bullet$}};
		\node[right] at (-.25,-1) {\small{$c_2$}};
		
		\node at (-1.75,-.5) {\small{$T_2 =$}};

		\draw[-] (-5.25,0)--(-6,-1);
		\draw[-] (-5.25,0)--(-4,-1);
		\draw[-] (-6,-1)--(-5.25,-1);
		
		\node at (-5.25,0) {\small{$\bullet$}};
		\node[above] at (-5.25,0) {\small{$p_1$}};
		
		\node at (-4,-1) {\small{$\bullet$}};
		\node[right] at (-4,-1) {\small{$a_1$}};
		
		\node at (-6,-1) {\small{$\bullet$}};
		\node[left] at (-6,-1) {\small{$b_1$}};
		
		\node at (-5.25,-1) {\small{$\bullet$}};
		\node[right] at (-5.25,-1) {\small{$c_1$}};
		
		\node at (-6.75,-.5) {\small{$T_1 =$}};
		\node at (-3,-.5) {;};
		\end{tikzpicture}
		\end{center}
	As $\omega(T_1)=\omega(T_2)$, we have
	$$
	\cons(p_1)=\cons(p_2),\text{ and }\ \cons(a_1;b_1,c_1)=\cons(a_2;b_2,c_2).
	$$
	The first equality implies $p_1=p_2$. As $a_1$ and $a_2$ are the largest letters in $\omega_2(T_1)$ respectively $\omega_2(T_2)$ satisfying $a_1\leq p_1$ and $a_2\leq p_2=p_1$, we have that $a_1=a_2$. Furthermore $\omega_2(T_i)$ contains $a_1+b_1=a_2+b_2$ unbarred letters, implying $b_1=b_2$, and $c_1=c_2$ barred letters. Thus indeed $T_1=T_2$.
	
	\smallskip
	\noindent The proof in full of the injectivity of $w$ follows similarly by induction.
	\noindent The first part of the theorem asserts that $w \circ \mathcal{T} = \text{id}_{\mathcal{GT}(n)}$. For $T\in\mathcal{GT}(n)$ we have by the first assertion of the theorem
	$$
	\omega((\mathcal{T}\circ w)(T))=\omega(T),
	$$
	and by injectivity we have $(\mathcal{T}\circ \omega )(T)=T$. Thus indeed $\mathcal{T}=\omega^{-1}$.
	\end{proof}
	\noindent From this theorem, in what follows, we will often denote words of $HW_n$ by their corresponding $C-$tree $T=\mathcal{T}(u)\in\mathcal{GT}(n)$.
	
	\section{Coherent presentations for $Pl^\mathbb{N}(C_n)$ and $Pl(C_n)$}
	\label{sec:coherent presentations}
	
	\noindent In this section we will compute the confluence diagrams of the critical branchings of the presentation \pocol\ of $Pl^\mathbb{N}(C_n)$ to compute the normal form of a $C-$tree $T$ of rank $2$
	
	$$
	\begin{tikzpicture}
	\draw[-] (-0.25,0)--(-1,-1);
	\draw[-] (-0.25,0)--(1,-1);
	\draw[-] (-1,-1)--(-0.25,-1);
	
	\node at (-.25,0) {\small{$\bullet$}};
	\node[above] at (-.25,0) {\small{$p$}};
	
	\node at (1,-1) {\small{$\bullet$}};
	\node[right] at (1,-1) {\small{$a$}};
	
	\node at (-1,-1) {\small{$\bullet$}};
	\node[left] at (-1,-1) {\small{$b$}};
	
	\node at (-.25,-1) {\small{$\bullet$}};
	\node[right] at (-.25,-1) {\small{$c$}};
	
	\node at (-1.75,-.5) {\small{$T =$}};
	\end{tikzpicture}
	$$
	we need to compute the insertion $\left(\omega_1(T)\leftarrow \omega_2(T)\right)=\left(\cons(p)\leftarrow \cons(a)\cons(p;b,c)\right)$. We know that $(\omega_1(T)\leftarrow \omega_2(T))=[\omega(T)]$, and by Theorem \ref{thm:normalofC}, we have that
	$$
	\begin{tikzpicture}
	\draw[-] (-0.25,0)--(-1,-1);
	\draw[-] (-0.25,0)--(1,-1);
	\draw[-] (-1,-1)--(-0.25,-1);
	
	\node at (-.25,0) {\small{$\bullet$}};
	\node[above] at (-.25,0) {\small{$p$}};
	
	\node at (1,-1) {\small{$\bullet$}};
	\node[right] at (1,-1) {\small{$a$}};
	
	\node at (-1,-1) {\small{$\bullet$}};
	\node[left] at (-1,-1) {\small{$b$}};
	
	\node at (-.25,-1) {\small{$\bullet$}};
	\node[right] at (-.25,-1) {\small{$c$}};
	
	\node at (-1.75,-.5) {\small{$T =$}};
	
	\draw[-] (3.75,0)--(3,-1);
	\draw[-] (3.75,0)--(5,-1);
	\draw[-] (3,-1)--(3.75,-1);
	
	\node at (3.75,0) {\small{$\bullet$}};
	\node[above] at (3.75,0) {\small{$a$}};
	
	\node at (5,-1) {\small{$\bullet$}};
	\node[right] at (5,-1) {\small{$a$}};
	
	\node at (3,-1) {\small{$\bullet$}};
	\node[left] at (3,-1) {\small{$q_0$}};
	
	\node at (3.75,-1) {\small{$\bullet$}};
	\node[right] at (3.75,-1) {\small{}};
	
	\node at (2,-.5) {\small{$\Longrightarrow$}};
	
	\node at (6,-.5) {\small{$=[T].$}};
	\end{tikzpicture}
	$$
	with $q_0=p+b-c-a$.
	\noindent Let now $w=tuv\in\adm^*$, and let $w^0$ its highest weight component. By definition of the Kashiwara operators, we know that $w^0=t'u'v'$ for some $t',u',v'\in\adm$. Recall from Section \ref{sec:red sequences} that
	$$
	\text{conf}(w)=\text{conf}(w^0)=(|a(w^0)|,|b(w^0)|),
	$$
	where $a(w^0)=(1,2,1,\dots)$, and $b(w^0)=(2,1,2,\dots)$ are the two reduction strategies for $w^0$. From Theorem \ref{thm:kindamain}, we consider the $C-$tree $T=\mathcal{T}(w^0)\in\mathcal{GT}_3(n)$. It is clear that the reduction strategy $a(T)$ and $b(T)$ corresponds to successive alternating insertions $\omega_1(T)\leftarrow \omega_2(T)$, and $\omega_2(T)\leftarrow \omega_3(T)$, with $a(T)$ starting with $\omega_1(T)\leftarrow \omega_2(T)$, and $b(T)$ starting with $\omega_2(T)\leftarrow \omega_3(T)$. In what follows, we will identify the maximal lengths of each of these sequences $a(T)$ and $b(T)$.
	
	\subsection{Upper bound for $a(T)$}
	
	\begin{thm}\label{thm:main ri len}
		Let $T\in\mathcal{GT}_3(n)$. Then $|a(T)|\leq 4$.
	\end{thm}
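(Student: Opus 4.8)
The plan is to translate the leftmost reduction strategy $a(T)$ entirely into the language of $C$-trees via Theorem~\ref{thm:kindamain}, and then bound the number of alternations using the structural results of Section~\ref{sec:calculations with c trees}. Since $T\in\mathcal{GT}_3(n)$, write $\omega(T)=\omega_1\omega_2\omega_3$ with each $\omega_t=\omega_t(T)$ an admissible column. The strategy $a(T)$ starts by inserting $\omega_2$ into $\omega_1$; by Theorem~\ref{thm:normalofC} applied to the rank-$2$ truncation $T_0$ of $T$, we know $(\omega_1\leftarrow\omega_2)=\cons(q_1(T_0))\cons(q_0(T_0))$, a \emph{two-column} word, so this first step is a single rewriting step producing $t'u'v$ with $t'u'=[\omega_1\omega_2]$. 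The key observation is then that after this step the word $t'u'v$ is again the reading of a $C$-tree: $t'u'=\omega(Q)$ where $Q=T(q_i(T_0))_{i=1}^2$ is the normal form of $T_0$, so $t'u'v=\omega(Q')$ for the $C$-tree $Q'$ of rank $3$ obtained by stacking the third level $\omega_3(T)$ of $T$ on top of $Q$. I would verify $Q'\in\mathcal{GT}_3(n)$ using Lemma~\ref{lem:Lecouvey} (the word remains highest weight since $q_i$ are preserved) together with the column/admissibility conditions, which hold because $q_i(Q')=q_i(T)$.

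Next I would iterate: the second step of $a(T)$ inserts $v=\omega_3(T)$ into $u'$, i.e. computes $(\omega_2(Q')\leftarrow\omega_3(Q'))$. Again by the rank-$2$ case of Theorem~\ref{thm:normalofC} this is at most two columns, so it is a single rewriting step (or the identity if $\omega_3(Q')\preceq\omega_2(Q')$ already), and the result is once more the reading of a $C$-tree. The crucial quantitative input is the rank-$2$ analysis done just above Theorem~\ref{thm:main ri len}: for a rank-$2$ $C$-tree with top-level data $(a;b,c)$ and valuation $p$ on the previous level, the admissibility condition forces $a+c\le p$, and Lemma~\ref{lemm: ref for n 2} then shows the insertion $\cons(p)\leftarrow\cons(a)\cons(p;b,c)$ collapses in a \emph{bounded} number of elementary insertions — specifically the barred part $\cons(p;b,c)$ gets absorbed immediately via a single $R_3$-type move after the unbarred $\cons(a)$ slides past. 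I would use this to argue that each alternation of $a(T)$ strictly decreases a suitable measure (e.g. the total number of columns, or the length $|v|$ of the rightmost column, as in the 2-column Lemma~\ref{lem:2collem}), and that after at most a fixed number of alternations the word is in normal form.

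The counting itself: the first insertion $\omega_1\leftarrow\omega_2$ is step $1$; this may produce a column $t''$ that is no longer $\preceq$-comparable, forcing a step on the right pair (step $2$), whose output may again disturb the left pair (step $3$), and finally one more left-pair step (step $4$) reaches the normal form $\prod_{i=0}^{2}\cons(q_{3-i}(T))$ guaranteed by Theorem~\ref{thm:normalofC}. To see that it terminates by step $4$ and not later, I would track how the block-column structure propagates: after step $2$ the rightmost column $v$ has become a \emph{block} of the form $\cons(q)$ (no barred letters survive, by the $R_3$ reduction), and Lemma~\ref{lemm: ref for n 2} part~1 shows that inserting such a block into the column to its left produces at most one new "disturbance" to the left, not a cascade. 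Concretely, once $v$ is an unbarred block, $(\omega_1\leftarrow v)$ is computed by at most one more left step and then $v$ reappears unchanged at the far right, already in normal position. The main obstacle I anticipate is the bookkeeping in the step-$3$-to-step-$4$ transition: one must rule out the possibility that the left-pair insertion at step $3$ produces a genuinely new three-column intermediate whose re-reduction would require a further right step. I expect this is excluded precisely by the admissibility inequality~(\ref{ineq:admissibility}) of $T$, which bounds the "overhang" $q_0$ and prevents the columns produced from being too long — but pinning that down rigorously, probably by an explicit case split on whether $\omega_3(T)$ contains barred letters (Corollary~\ref{cor:normal form of C} handles the unbarred case outright) and on the relative sizes of the $q_i(T)$, is where the real work lies.
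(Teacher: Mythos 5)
Your plan is essentially the paper's proof: the leftmost strategy $a(T)$ is exactly the chain $T\Rightarrow T'\Rightarrow T''\Rightarrow T'''\Rightarrow T''''$ constructed in the proof of Theorem~\ref{thm:normalofC} via Propositions~\ref{prop:helper2 for normal} and~\ref{prop:third dude} and Corollary~\ref{cor:normal form of C}, specialized to rank $3$, and the paper's argument for Theorem~\ref{thm:main ri len} is nothing more than a citation of equations \eqref{eq:111}--\eqref{eq:444}. The one point you explicitly leave open --- ruling out a fifth step after the third insertion --- is closed by an observation already recorded in that proof: Proposition~\ref{prop:third dude} outputs a tree $T''$ whose labels vanish on all inner vertices, so $\omega(T'')$ contains no barred letters; the last two steps are then precisely the unbarred situation of Corollary~\ref{cor:normal form of C}, whose result $\prod_{i}\cons(q_{3-i})$ is already standard, so no further disturbance can propagate. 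Two small corrections to your bookkeeping: in the alternating strategy $a(w)=(1,2,1,2,\dots)$ the fourth step is a \emph{right}-pair insertion, not another left-pair step as you wrote (compare Example~\ref{ex:importan examplos}, where the sequence is left, right, left, right); and the absorption of the barred blocks in step $2$ is not a single contraction in general but the iterated computation $\bigl(\cons(q_1)\leftarrow \overline{R_1}\cdots\overline{R_k}\bigr)$ of Proposition~\ref{prop:third dude}, which nonetheless counts as one rewriting step in \pocol\ because the whole column insertion $c_1c_2\Rightarrow(c_1\leftarrow c_2)$ is a single generating $2$-cell.
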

	\begin{proof}
		Consider a tree $T\in\mathcal{GT}_3(n)$ in the proof of Theorem \ref{thm:normalofC}. Then the equations \eqref{eq:111},\eqref{eq:222},\eqref{eq:333}, and \eqref{eq:444} for $T\in\mathcal{GT}_3(n)$ take the form
		\begin{align*}
		T'=T\left((\omega_1(T)\leftarrow \omega_2(T))\omega_3(T)\right),\\
		T''=T\left((\omega_1(T')(\omega_2(T')\leftarrow \omega_3(T')))\right),\\
		T'''=T\left((\omega_1(T'')\leftarrow \omega_2(T''))\omega_3(T'')\right),\\
		T''''=T\left((\omega_1(T''')(\omega_2(T''')\leftarrow \omega_3(T''')))\right),
		\end{align*}
		with $T''''$ being normal. Thus we see that $|a(T)|\leq 4$. Which completes the proof of the Theorem.
	\end{proof}	
	\noindent Here we describe each of the $C-$trees in the reduction sequence from $T$ to $[T]$ in $\mathcal{GT}_3(n)$.
	
	\begin{equation}\label{eq: gen rk 3 tree}
	\begin{tikzpicture}
	\draw[-] (-5.25,0)--(-6,-1);
	\draw[-] (-6,-1)--(-5.25,-1);
	\draw[-] (-5.25,-1)--(-6,-2);
	\draw[-] (-6,-2)--(-5.25,-2);
	
	\draw[-] (-5.25,0)--(-4,-1);
	\draw[-] (-4,-1)--(-2.75,-2);
	
	\draw[-] (-4,-1)--(-4.75,-2);
	\draw[-] (-4.75,-2)--(-4,-2);
	
	\node at (-5.25,0) {\small{$\bullet$}};
	\node[above] at (-5.25,0) {\small{$a$}};
	
	\node at (-4,-1) {\small{$\bullet$}};
	\node[right] at (-4,-1) {\small{$a$}};
	
	\node at (-6,-1) {\small{$\bullet$}};
	\node[left] at (-6,-1) {\small{$q_0'-a$}};
	
	\node at (-5.25,-1) {\small{$\bullet$}};
	\node[right] at (-5.25,-1) {\small{}};
	
	\node at (-2.75,-2) {\small{$\bullet$}};
	\node[right] at (-2.75,-2) {\small{$d$}};
	
	\node at (-4.75,-2) {\small{$\bullet$}};
	\node[below] at (-4.75,-2) {\small{$e$}};
	
	\node at (-6,-2) {\small{$\bullet$}};
	\node[left] at (-6,-2) {\small{$f$}};
	
	\node at (-5.25,-2) {\small{$\bullet$}};
	\node[below] at (-5.25,-2) {\small{$g$}};

	\node at (-4,-2) {\small{$\bullet$}};
	\node[below] at (-4,-2) {\small{$h$}};
	
	
	\draw[-] (-.25,0)--(-1,-1);
	\draw[-] (-1,-1)--(-.25,-1);
	\draw[-] (-.25,-1)--(-1,-2);
	\draw[-] (-1,-2)--(-.25,-2);
	
	\draw[-] (-.25,0)--(1,-1);
	\draw[-] (1,-1)--(2.25,-2);
	
	\draw[-] (1,-1)--(.25,-2);
	\draw[-] (0.25,-2)--(1,-2);
	
	\node at (-.25,0) {\small{$\bullet$}};
	\node[above] at (-.25,0) {\small{$a$}};
	
	\node at (1,-1) {\small{$\bullet$}};
	\node[right] at (1,-1) {\small{$d$}};
	
	\node at (-1,-1) {\small{$\bullet$}};
	\node[below] at (-1.2,-1) {\small{$e$}};
	
	\node at (-.25,-1) {\small{$\bullet$}};
	\node[right] at (-.25,-1) {\small{}};
	
	\node at (2.25,-2) {\small{$\bullet$}};
	\node[right] at (2.25,-2) {\small{$d$}};
	
	\node at (.25,-2) {\small{$\bullet$}};
	\node[below] at (.25,-2) {\small{$q_1-d$}};
	
	\node at (-1,-2) {\small{$\bullet$}};
	\node[below] at (-1.25,-2) {\small{$q_0-a-e$}};
	
	\node at (-.25,-2) {\small{$\bullet$}};
	\node[below] at (-.25,-2) {\small{}};
	
	\node at (1,-2) {\small{$\bullet$}};
	\node[below] at (1,-2) {\small{}};
	
	\node at (-2.5,-1) {\small{$\Longrightarrow$}};

	
	\draw[-] (4.75,0)--(4,-1);
	\draw[-] (4,-1)--(4.75,-1);
	\draw[-] (4.75,-1)--(4,-2);
	\draw[-] (4,-2)--(4.75,-2);
	
	\draw[-] (4.75,0)--(6,-1);
	\draw[-] (6,-1)--(7.25,-2);
	
	\draw[-] (6,-1)--(5.25,-2);
	\draw[-] (5.25,-2)--(6,-2);
	
	\node at (4.75,0) {\small{$\bullet$}};
	\node[above] at (4.75,0) {\small{$d$}};
	
	\node at (6,-1) {\small{$\bullet$}};
	\node[right] at (6,-1) {\small{$d$}};
	
	\node at (4,-1) {\small{$\bullet$}};
	\node[below] at (3.75,-1) {\small{$a+e-d$}};
	
	\node at (4.75,-1) {\small{$\bullet$}};
	\node[right] at (4.75,-1) {\small{}};
	
	\node at (7.25,-2) {\small{$\bullet$}};
	\node[right] at (7.25,-2) {\small{$d$}};
	
	\node at (5.25,-2) {\small{$\bullet$}};
	\node[below] at (5.25,-2) {\small{$q_1-d$}};
	
	\node at (4,-2) {\small{$\bullet$}};
	\node[below] at (3.75,-2) {\small{$q_0-a-e$}};
	
	\node at (4.75,-2) {\small{$\bullet$}};
	\node[below] at (4.75,-2) {\small{}};
	
	\node at (6,-2) {\small{$\bullet$}};
	\node[below] at (6,-2) {\small{}};
	
	\node at (3,-1) {\small{$\Longrightarrow$}};

	
	\draw[-] (-6.25,-3.5)--(-7,-4.5);
	\draw[-] (-7,-4.5)--(-6.25,-4.5);
	\draw[-] (-6.25,-4.5)--(-7,-5.5);
	\draw[-] (-7,-5.5)--(-6.25,-5.5);
	
	\draw[-] (-6.25,-3.5)--(-5,-4.5);
	\draw[-] (-5,-4.5)--(-3.75,-5.5);
	
	\draw[-] (-5,-4.5)--(-5.75,-5.5);
	\draw[-] (-5.75,-5.5)--(-5,-5.5);
	
	\node at (-6.25,-3.5) {\small{$\bullet$}};
	\node[above] at (-6.25,-3.5) {\small{$p$}};
	
	\node at (-5,-4.5) {\small{$\bullet$}};
	\node[right] at (-5,-4.5) {\small{$a$}};
	
	\node at (-7,-4.5) {\small{$\bullet$}};
	\node[left] at (-7,-4.5) {\small{$b$}};
	
	\node at (-6.25,-4.5) {\small{$\bullet$}};
	\node[right] at (-6.25,-4.5) {\small{c}};
	
	\node at (-3.75,-5.5) {\small{$\bullet$}};
	\node[right] at (-3.75,-5.5) {\small{$d$}};
	
	\node at (-5.75,-5.5) {\small{$\bullet$}};
	\node[below] at (-5.75,-5.5) {\small{$e$}};
	
	\node at (-7,-5.5) {\small{$\bullet$}};
	\node[left] at (-7,-5.5) {\small{$f$}};
	
	\node at (-6.25,-5.5) {\small{$\bullet$}};
	\node[below] at (-6.25,-5.5) {\small{$g$}};
	
	\node at (-5,-5.5) {\small{$\bullet$}};
	\node[below] at (-5,-5.5) {\small{$h$}};
	
	
	\draw[-]   (5.75,-3.5)--(5,-4.5);
	\draw[-] (5,-4.5)--(5.75,-4.5);
	\draw[-] (5.75,-4.5)--(5,-5.5);
	\draw[-] (5,-5.5)--(5.75,-5.5);
	
	\draw[-] (5.75,-3.5)--(7,-4.5);
	\draw[-] (7,-4.5)--(8.25,-5.5);
	
	\draw[-] (7,-4.5)--(6.25,-5.5);
	\draw[-] (6.25,-5.5)--(7,-5.5);
	
	\node at (5.75,-3.5) {\small{$\bullet$}};
	\node[above] at (5.75,-3.5) {\small{$d$}};
	
	\node at (7,-4.5) {\small{$\bullet$}};
	\node[right] at (7,-4.5) {\small{$d$}};
	
	\node at (5,-4.5) {\small{$\bullet$}};
	\node[left] at (5,-4.5) {\small{$q_1-d$}};
	
	\node at (5.75,-4.5) {\small{$\bullet$}};
	\node[right] at (5.75,-4.5) {\small{}};
	
	\node at (8.25,-5.5) {\small{$\bullet$}};
	\node[right] at (8.25,-5.5) {\small{$d$}};
	
	\node at (6.25,-5.5) {\small{$\bullet$}};
	\node[below] at (6.25,-5.5) {\small{$q_1-d$}};
	
	\node at (5,-5.5) {\small{$\bullet$}};
	\node[left] at (5,-5.5) {\small{$q_0-q_1$}};
	
	\node at (5.75,-5.5) {\small{$\bullet$}};
	\node[below] at (5.75,-5.5) {\small{}};
	
	\node at (7,-5.5) {\small{$\bullet$}};
	\node[below] at (7,-5.5) {\small{}};
	
	\node at (-5,-3.25) {\Large{$\Uparrow$}};
	\node at (7,-3.25) {\Large{$\Downarrow$}};
	\end{tikzpicture}
	\end{equation}
	\noindent where $q_0'=p+b-c$.
	\subsection{Upper bound for $b(T)$}
	\noindent We begin with an auxiliary result. This result is in a more general form than we need here.
	
	\begin{prop}\label{prop: h disappears}
		Let $T\in\mathcal{GT}_k(n)$ with $k\geq 1$ and assume that $\omega_k(T)\preceq \omega_{k-1}(T)$. Then $s((k-1)1^-)=0$.
	\end{prop}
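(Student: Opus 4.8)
The plan is to argue by contradiction: assume $m := s\bigl((k-1)1^-\bigr) > 0$ and contradict the admissibility of the column $\omega_k(T)$, using the hypothesis $\omega_k(T)\preceq\omega_{k-1}(T)$ only to force a rigidity at the top of $\omega_k(T)$. (For $k=1$ there is nothing to prove, as $(k-1)1^-$ is not a vertex of $\Gamma_C$; so assume $k\ge 2$.)

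First I would read off, from the definitions in Section~\ref{sec:Ctrees}, how the strand $k-1$ contributes to the two relevant level readings. In $\omega_{k-1}(T)$ the strand $k-1$ supplies only the block $\rho(k-1)=\cons\bigl(s(k-1)\bigr)$, which is its top block; so $\omega_{k-1}(T)$ begins with $1,2,\dots,s(k-1)$ and $|\omega_{k-1}(T)|\ge s(k-1)$. In $\omega_k(T)$ the strand $k-1$ supplies the unbarred block $\rho\bigl((k-1)1\bigr)=\cons\bigl(s(k-1),s((k-1)1)\bigr)$, sitting directly below $\rho(k)$, together with the barred block $\rho\bigl((k-1)1^-\bigr)=\cons\bigl(\overline{q((k-1)1)},m\bigr)$, which is the \emph{last} block of $\omega_k(T)$. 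Set $z:=q\bigl((k-1)1\bigr)=s(k-1)+s\bigl((k-1)1\bigr)$.

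The middle step uses $\omega_k(T)\preceq\omega_{k-1}(T)$ to pin down the top of $\omega_k(T)$. Since $\omega_{k-1}(T)$ starts with $1,2,\dots,s(k-1)$, the letterwise-comparison clause in the definition of $\preceq$ gives that the $i$-th letter of $\omega_k(T)$ is $\le i$ for $i=1,\dots,s(k-1)$; being a strictly increasing column, $\omega_k(T)$ then has first $s(k-1)$ entries exactly $1,2,\dots,s(k-1)$. Matching this against the block decomposition $\omega_k(T)=\rho(k)\,\rho\bigl((k-1)1\bigr)\cdots$ and the column conditions \eqref{eq:ineq3} (equivalently: $\omega_k(T)$ is a column, so every nonzero block begins strictly above the last letter of its predecessor) forces $\rho(k)=12\cdots s(k-1)$, i.e.\ $s(k)=s(k-1)$; the degenerate possibilities $s((k-1)1)=0$ and $s(k-1)=0$ are disposed of by a similar, easier direct count.

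Finally I would count $N_z\bigl(\omega_k(T)\bigr)$. By the middle step the unbarred letters of $\omega_k(T)$ that are $\le z$ are precisely those of $\rho(k)\,\rho\bigl((k-1)1\bigr)=12\cdots z$, hence there are exactly $z$ of them (every later unbarred block starts above $z$ by the column conditions); and the barred letters of $\omega_k(T)$ that are $\ge\overline{z}$ are precisely the $m$ letters of $\rho\bigl((k-1)1^-\bigr)$, since the column conditions place each earlier barred block strictly below $\overline{z}$. Thus $N_z\bigl(\omega_k(T)\bigr)=z+m>z$, contradicting $N_z\bigl(\omega_k(T)\bigr)\le z$, which holds since $\omega_k(T)$ is an admissible column (because $T\in\mathcal{GT}(n)$). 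Hence $m=0$. The main obstacle is the middle step: reading the block structure of $\omega_k(T)$ near its top correctly and carrying out the small case analysis on whether $s((k-1)1)$ or $s(k-1)$ vanishes, all of which amounts to invoking the column conditions \eqref{eq:ineq3} in the right direction.
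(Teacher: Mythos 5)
Your argument is correct and follows essentially the same route as the paper's proof: both use the letterwise comparison in $\preceq$ together with the column conditions to force $s(k)=s(k-1)$, and then the admissibility bound $N_z(\omega_k(T))\le z$ at $z=q((k-1)1)$ to rule out $s((k-1)1^-)>0$. The degenerate cases $s((k-1)1)=0$ and $s(k-1)=0$ that you defer do go through as you indicate, and the paper treats them with the same brevity.
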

	\begin{proof}
		As $\omega_k(T)\preceq \omega_{k-1}(T)$, then $\omega_{k-1}(T)\omega_k(T)$ is standard in $\adm^*$. Set $a=s(k-1)$, $d=s(k)$, $e=s((k-1)1)$, and $h=s((n-1)1^-)$. By the column conditions, we have $d\leq a$.
		
		\smallskip
		\noindent Assume first that $a=0$. Then $d=0$. Note that if $e=0$, then we clearly have $h=0$, as $h\leq a+e=0$. If $e>0$, we have $\omega_k(T)=12\cdots e\cdots$.
		If we had $h>0$, then $\overline{e}\in \omega_k(T)$, but then
		$
		N_e(\omega_k(T))\geq e+1
		$
		which contradicts the admissibility of $\omega_k(T)$. Thus we have $h=0$.
		
		\smallskip
		\noindent Assume now that $a\neq 0$. Again, if $e=0$, by a similar reasoning as in the previous case, we have $h=0$. So suppose that $e>0$. Then the first $d+1$ letters of $\omega_{k-1}(T)$ and $\omega_k(T)$ are respectively $12\cdots d (d+1)$ and $12\cdots d (a+1)$. Since we have $\omega_k(T)\preceq \omega_{k-1}(T)$, we have that in particular $a+1\leq d+1$, which gives $a=d$. But then we have that the first $d+e$ letters of $\omega_k(T)$ are $12\cdots (d+e)$, and if $h\neq 0$, we have $\overline{d+e}\in \omega_k(T)$, which contradicts the admissibility of $\omega_k(T)$. Hence indeed we have $h=0$.
		
		\smallskip
		\noindent This way we have shown that $s((k-1)1^-)=0$ in each case, and thus we have proved the proposition.
	\end{proof}
	\noindent Now we use Theorem \ref{thm:main ri len} and Proposition \ref{prop: h disappears} to find an upper bound for $b(T)$.
	
	\begin{thm}\label{thm:red len le}
		Let $T\in\mathcal{GT}_3(n)$. Then $|b(T)|\leq 3$.
	\end{thm}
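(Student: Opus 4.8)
\noindent The plan is to reduce the statement to a length bound on the \emph{leftmost} strategy for the word obtained after the very first rewriting step of $b(T)$, and then to re-run the explicit tree computation behind Theorem \ref{thm:main ri len} in the presence of the degeneracy supplied by Proposition \ref{prop: h disappears}.

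\noindent First I would record that if $T\in\mathcal{GT}_3(n)$ actually carries a critical branching, then $\omega_3(T)\npreceq\omega_2(T)$, so $b(T)$ begins by reducing the pair $\omega_2(T)\omega_3(T)$. By the $2$-column Lemma \ref{lem:2collem}, $(\omega_2(T)\leftarrow\omega_3(T))=[\omega_2(T)\omega_3(T)]$ consists of at most two columns, so this first step is a single $2$-cell and produces $w_1=\omega_1(T)\,\omega_2^{(1)}\omega_3^{(1)}$ with $\omega_3^{(1)}\preceq\omega_2^{(1)}$. Since $\omega(T)$ is of highest weight and $\varepsilon_i$ is an invariant of the crystal component (hence unchanged under $\equiv_\epsilon$ by Theorem \ref{prop:crystal in plactic}, cf.\ Proposition \ref{prop:perties of new cr}), the word $w_1\equiv_\epsilon\omega(T)$ is again of highest weight; by Theorem \ref{thm:kindamain} it equals $\omega(T_1)$ for a unique $T_1\in\mathcal{GT}_3(n)$, and $q_i(T_1)=q_i(T)$ for all $i$ by Corollary \ref{cor:qi are preserved}. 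After this first step the pairs visited by $b(T)$ are, in order, $(1,2),(2,3),(1,2),\dots$, i.e.\ the remaining rewriting sequence is exactly the leftmost strategy $a(T_1)$; hence it suffices to prove $|a(T_1)|\le 2$.

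\noindent Here Proposition \ref{prop: h disappears} enters: because $\omega_3(T_1)=\omega_3^{(1)}\preceq\omega_2^{(1)}=\omega_2(T_1)$, the label of $T_1$ at the inner vertex $21^-$ vanishes, i.e.\ $s_{T_1}(21^-)=0$; in the notation of diagram \eqref{eq: gen rk 3 tree} this is the case $h=0$. The claim to be proved is then: \emph{a $C$-tree $T_1\in\mathcal{GT}_3(n)$ whose last pair is standard and whose $21^-$-label is zero satisfies $|a(T_1)|\le 2$.} I would establish this by running the proof of Theorem \ref{thm:normalofC} on $T_1$: the chain of trees $T_1\Rightarrow T_1'\Rightarrow T_1''\Rightarrow T_1'''\Rightarrow T_1''''$ coming from \eqref{eq:111}--\eqref{eq:444} is computed by the explicit insertions of Propositions \ref{prop:helper2 for normal} and \ref{prop:third dude} together with Lemma \ref{lemm: ref for n 2} (recall $\omega_1(T_1)=\cons(p)$ is a block column, so every insertion into or out of it is governed by that lemma). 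With $h=0$ one checks that $T_1''$ is already in normal form — equivalently, the transformations \eqref{eq:333} and \eqref{eq:444}, i.e.\ $T_1''\Rightarrow T_1'''$ and $T_1'''\Rightarrow T_1''''$, are identities — because the vanishing of the $h$-block removes precisely the barred letters that, in the general case, force the extra round of re-reduction. Thus the relevant part of the chain is $T_1\Rightarrow T_1'\Rightarrow T_1''$, giving $|a(T_1)|\le 2$ and finally $|b(T)|\le 3$. The degenerate situations — some $\omega_t(T)=\epsilon$, or $\omega_3^{(1)}\preceq\omega_2^{(1)}$ already holding before the first reduction — only shorten the chain and cause no trouble.

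\noindent The main obstacle is the bookkeeping in this last step: one must recompute, now for the \emph{rightmost} order of insertions, how the labels $a,b,c,d,e,f,g$ of \eqref{eq: gen rk 3 tree} transform under each applied $2$-cell, distinguishing for every column insertion the three possible highest-weight shapes $12\cdots p(p+1)$, $12\cdots p\,1$, $12\cdots p\,\overline p$, and then verifying in each branch that after the third $2$-cell the word $\omega_1^{(\cdot)}\omega_2^{(\cdot)}\omega_3^{(\cdot)}$ already satisfies $\omega_3^{(\cdot)}\preceq\omega_2^{(\cdot)}\preceq\omega_1^{(\cdot)}$. This is mechanical but lengthy; the conceptual input is carried entirely by Lemma \ref{lem:2collem}, Proposition \ref{prop: h disappears}, Theorem \ref{thm:normalofC}, and the fact that $\omega_1(T)$ is always a block column.
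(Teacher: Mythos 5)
Your proposal is correct and follows essentially the same route as the paper: take the first (rightmost) step on the pair $\omega_2(T)\omega_3(T)$, invoke Proposition \ref{prop: h disappears} to conclude that the $21^-$ label of the resulting tree vanishes, and then check via the explicit tree computations of \eqref{eq: gen rk 3 tree} that two further alternating insertions already reach the standard form. The only differences are presentational — you recast the tail of $b(T)$ as the leftmost strategy $a(T_1)$ of the intermediate tree and defer the final label bookkeeping, which is precisely the computation the paper carries out with its diagrams.
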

	\begin{proof}
		Let
		$$
		\begin{tikzpicture}
		\draw[-] (4,0)--(3.25,-1);
		\draw[-] (3.25,-1)--(4,-1);
		\draw[-] (4,-1)--(3.25,-2);
		\draw[-] (3.25,-2)--(4,-2);
		
		\draw[-] (4,0)--(5.25,-1);
		\draw[-] (5.25,-1)--(6.5,-2);
		
		\draw[-] (5.25,-1)--(4.5,-2);
		\draw[-] (4.5,-2)--(5.25,-2);
		
		\node at (4,0) {\small{$\bullet$}};
		\node[above] at (4,0) {\small{$p$}};
		
		\node at (5.25,-1) {\small{$\bullet$}};
		\node[right] at (5.25,-1) {\small{$a$}};
		
		\node at (3.25,-1) {\small{$\bullet$}};
		\node[left] at (3.25,-1) {\small{$b$}};
		
		\node at (4,-1) {\small{$\bullet$}};
		\node[above] at (4,-1) {\small{$c$}};
		
		\node at (6.5,-2) {\small{$\bullet$}};
		\node[right] at (6.5,-2) {\small{$d$}};
		
		\node at (4.5,-2) {\small{$\bullet$}};
		\node[below] at (4.5,-2) {\small{$e$}};
		
		\node at (3.25,-2) {\small{$\bullet$}};
		\node[left] at (3.25,-2) {\small{$f$}};
		
		\node at (4,-2) {\small{$\bullet$}};
		\node[above] at (4,-2) {\small{$g$}};

		\node at (5.25,-2) {\small{$\bullet$}};
		\node[above] at (5.25,-2) {\small{$h$}};
		\node at (2.25,-1) {\small{$T=$}};
		\end{tikzpicture}
		$$		
		Then by Proposition \ref{prop: h disappears}, we have that
		$$
		\begin{tikzpicture}
		\draw[-] (-0.25,0)--(-1,-1);
		\draw[-] (-1,-1)--(-0.25,-1);
		\draw[-] (-0.25,-1)--(-1,-2);
		\draw[-] (-1,-2)--(-0.25,-2);
		
		\draw[-] (-0.25,0)--(1,-1);
		\draw[-] (1,-1)--(2.25,-2);
		
		\draw[-] (1,-1)--(0.25,-2);
		\draw[-] (0.25,-2)--(1,-2);
		
		\node at (-.25,0) {\small{$\bullet$}};
		\node[above] at (-.25,0) {\small{$p$}};
		
		\node at (1,-1) {\small{$\bullet$}};
		\node[right] at (1,-1) {\small{$d$}};
		
		\node at (-1,-1) {\small{$\bullet$}};
		\node[left] at (-1,-1) {\small{$b_1$}};
		
		\node at (-.25,-1) {\small{$\bullet$}};
		\node[right] at (-.25,-1) {\small{$c_1$}};
		
		\node at (2.25,-2) {\small{$\bullet$}};
		\node[right] at (2.25,-2) {\small{$d$}};
		
		\node at (.25,-2) {\small{$\bullet$}};
		\node[below] at (.25,-2) {\small{$e_1$}};
		
		\node at (-1,-2) {\small{$\bullet$}};
		\node[left] at (-1,-2) {\small{$f_1$}};
		
		\node at (-.25,-2) {\small{$\bullet$}};
		\node[below] at (-.25,-2) {\small{$g_1$}};

		\node at (1,-2) {\small{$\bullet$}};
		\node[below] at (1,-2) {\small{}};
		
		\node at (-4.6,-1) {\small{$T'=\mathcal{T}\left(\omega_1(T)(\omega_2(T)\leftarrow \omega_3(T))\right) =$}};
		\end{tikzpicture}
		$$
		Since $[T]=[T']$, by Corollary \ref{cor:qi are preserved} we have that $q_i(T)=q_i(T')$. More precisely we have
		$
		a+e-h=q_1(T)=q_1(T')=d+e_1
		$
		which gives us $e_1=a+e-h-d=q_1-d$.
		Next we have
		$$
		\begin{tikzpicture}
		\draw[-] (-0.25,0)--(-1,-1);
		\draw[-] (-1,-1)--(-0.25,-1);
		\draw[-] (-0.25,-1)--(-1,-2);
		\draw[-] (-1,-2)--(-0.25,-2);
		
		\draw[-] (-0.25,0)--(1,-1);
		\draw[-] (1,-1)--(2.25,-2);
		
		\draw[-] (1,-1)--(0.25,-2);
		\draw[-] (0.25,-2)--(1,-2);
		
		\node at (-.25,0) {\small{$\bullet$}};
		\node[above] at (-.25,0) {\small{$d$}};
		
		\node at (1,-1) {\small{$\bullet$}};
		\node[right] at (1,-1) {\small{$d$}};
		
		\node at (-1,-1) {\small{$\bullet$}};
		\node[below left] at (-1,-1) {\small{$p+b_1-c_1-d$}};
		
		\node at (-.25,-1) {\small{$\bullet$}};
		\node[right] at (-.25,-1) {\small{}};
		
		\node at (2.25,-2) {\small{$\bullet$}};
		\node[right] at (2.25,-2) {\small{$d$}};
		
		\node at (.25,-2) {\small{$\bullet$}};
		\node[below] at (.25,-2) {\small{$q_1-d$}};
		
		\node at (-1,-2) {\small{$\bullet$}};
		\node[left] at (-1,-2) {\small{$f_1$}};
		
		\node at (-.25,-2) {\small{$\bullet$}};
		\node[above] at (-.25,-2) {\small{$g_1$}};

		\node at (1,-2) {\small{$\bullet$}};
		\node[below] at (1,-2) {\small{}};
		
		\node at (5,-1) {\small{$=T''=\mathcal{T}\left((\omega_1(T')\leftarrow \omega_2(T'))\omega_2(T')\right)$}};
		
		\end{tikzpicture}
		$$
		We note that $T''$ is of the same form as the fourth $C-$tree in \eqref{eq: gen rk 3 tree}, hence the final computation there we obtain
		$$
		\begin{tikzpicture}
		\draw[-] (-0.25,0)--(-1,-1);
		\draw[-] (-1,-1)--(-0.25,-1);
		\draw[-] (-0.25,-1)--(-1,-2);
		\draw[-] (-1,-2)--(-0.25,-2);
		
		\draw[-] (-0.25,0)--(1,-1);
		\draw[-] (1,-1)--(2.25,-2);
		
		\draw[-] (1,-1)--(0.25,-2);
		\draw[-] (0.25,-2)--(1,-2);
		
		\node at (-.25,0) {\small{$\bullet$}};
		\node[above] at (-.25,0) {\small{$d$}};
		
		\node at (1,-1) {\small{$\bullet$}};
		\node[right] at (1,-1) {\small{$d$}};
		
		\node at (-1,-1) {\small{$\bullet$}};
		\node[below left] at (-1,-1) {\small{$q_1-d$}};
		
		\node at (-.25,-1) {\small{$\bullet$}};
		\node[right] at (-.25,-1) {\small{}};
		
		\node at (2.25,-2) {\small{$\bullet$}};
		\node[right] at (2.25,-2) {\small{$d$}};
		
		\node at (.25,-2) {\small{$\bullet$}};
		\node[below] at (.25,-2) {\small{$q_1-d$}};
		
		\node at (-1,-2) {\small{$\bullet$}};
		\node[left] at (-1,-2) {\small{$q_0-d-e_1$}};
		
		\node at (-.25,-2) {\small{$\bullet$}};
		\node[below] at (-.25,-2) {\small{}};

		\node at (1,-2) {\small{$\bullet$}};
		\node[below] at (1,-2) {\small{}};
		
		\node at (5,-1) {\small{$=T'''=\mathcal{T}((\omega_1(T)(\omega_2(T)\leftarrow \omega_3(T)))$}};
		\end{tikzpicture}
		$$
		and since $q_1=d+e_1$, we see that $T'''$ is indeed standard thus we obtain $|b(T)|\leq 3$, which is what we wanted to show.
	\end{proof}
	
	\begin{exo}
		Theorems \ref{thm:main ri len} and \ref{thm:red len le} give only upper bounds on $\text{conf}(T)$. Here we show that these are upper bounds are optimal.
		
		\smallskip
		\noindent Consider the $C-$tree $T\in\mathcal{GT}_3(n)$
		$$
		\begin{tikzpicture}
		\draw[-] (4,0)--(3.25,-1);
		\draw[-] (3.25,-1)--(4,-1);
		\draw[-] (4,-1)--(3.25,-2);
		\draw[-] (3.25,-2)--(4,-2);
		
		\draw[-] (4,0)--(5.25,-1);
		\draw[-] (5.25,-1)--(6.5,-2);
		
		\draw[-] (5.25,-1)--(4.5,-2);
		\draw[-] (4.5,-2)--(5.25,-2);
		
		\node at (4,0) {\small{$\bullet$}};
		\node[above] at (4,0) {\small{$2$}};
		
		\node at (5.25,-1) {\small{$\bullet$}};
		\node[right] at (5.25,-1) {\small{$1$}};
		
		\node at (3.25,-1) {\small{$\bullet$}};
		\node[left] at (3.25,-1) {\small{}};
		
		\node at (4,-1) {\small{$\bullet$}};
		\node[above] at (4,-1) {\small{}};
		
		\node at (6.5,-2) {\small{$\bullet$}};
		\node[right] at (6.5,-2) {\small{}};
		
		\node at (4.5,-2) {\small{$\bullet$}};
		\node[below] at (4.5,-2) {\small{$1$}};
		
		\node at (3.25,-2) {\small{$\bullet$}};
		\node[left] at (3.25,-2) {\small{}};
		
		\node at (4,-2) {\small{$\bullet$}};
		\node[above] at (4,-2) {\small{}};

		\node at (5.25,-2) {\small{$\bullet$}};
		\node[below] at (5.25,-2) {\small{$1$}};
		\node at (2.25,-1) {\small{$T=$}};
		\end{tikzpicture}
		$$
		with word $\omega(T)=tuv=1212\overline{2}$, where $t=12$, $u=1$, and $v=2\overline{2}$.  We then have
		
		\begin{align*}
		T'=(12\leftarrow 1)2\overline{2}=(1)(12)(2\overline{2})\\
		T''=1(12\leftarrow 2\overline{2})=(1)(2)(1)\\
		T'''=(1\leftarrow 2)1=(\epsilon)(12)(1)\\
		T''''=\epsilon (12\leftarrow 1)=(\epsilon) (1)(12)
		\end{align*}
		thus indeed $|a(T)|=4$.
		
		\noindent To compute $|b(T)|$ we have
		\begin{align*}
		R'=12(1\leftarrow 2\overline{2})=(12)(\epsilon)(1)\\
		R''=(12\leftarrow \epsilon)1=(\epsilon)(12)(1)\\
		R'''=\epsilon (12\leftarrow 1)=(\epsilon)(1)(12)
		\end{align*}
		so that we have $|b(T)|=3$. Thus we have $\text{conf}(T)=(3,4)$.
		
		\label{ex:importan examplos}
	\end{exo}
	
	\noindent We are now ready to formalize our results in terms of presentations. Squier's theorem from \cite{squier1994finiteness} asserts given a convergent presentation, a family of generating confluences of \pocol\ forms a generating set for the $3-$cells of its coherent extension. In the case of the presentation \pocol\ of $Pl^\mathbb{N}(C_n)$, the family of generating confluences is those with source in $\{w=tuv\in \adm^*\ |\ v\npreceq u\npreceq t\}$, namely the critical branchings. In Theorems \ref{thm:main ri len} and \ref{thm:red len le} we have found upper bounds for the two reduction sequences of these critical branchings, and Example \ref{ex:importan examplos} we see that these can in fact be reached. Thus we have the following.
	
	\begin{thm}\label{thm:lastbieq}
		The generating $3-$cells of the coherent presentation \pocol\ of $Pl^\mathbb{N}(C_n)$ are of the form
		\begin{equation}\label{eq:confidag end}
		{{\xymatrix{ & t'u'v \ar@{=>}[r] & t'u''v' \ar@{=>}[r] & t''u'''v' \ar@{=>}[rd]\\ tuv \ar@{=>}[ru]^{\alpha_{tu}v} \ar@{=>}[rd]_{t\alpha_{uv}}&  &  &  & t_0u_0v_0 \\ & t u_1 v_1 \ar@{=>}[rr] & &t_1u_2v_1 \ar@{=>}[ur]}}} 
		\end{equation}
		where we allow for some of the arrows to be the identity.
	\end{thm}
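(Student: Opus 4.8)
The plan is to assemble the statement from Squier's coherence theorem together with the length bounds established in Theorems~\ref{thm:main ri len} and~\ref{thm:red len le}. Since \pocol\ is finite and convergent by Theorem~\ref{thm:conv poly}, Squier's theorem guarantees that the $(3,1)$-polygraph obtained by adjoining, for each critical branching, the $3$-cell whose source and target are the two reduction paths to the common normal form, is a coherent presentation of $Pl^\mathbb{N}(C_n)$. Thus it suffices to show that every such confluence diagram has the shape displayed in~\eqref{eq:confidag end}.

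First I would recall that a critical branching of \pocol\ has source a word $w = tuv$ with $t,u,v\in\adm$ and $v\npreceq u\npreceq t$, and that the two reduction sequences it initiates are precisely the leftmost strategy $a(w)$ and the rightmost strategy $b(w)$ in the terminology of Section~\ref{sec:red sequences}. By Theorem~\ref{thm:confdiags}, $\text{conf}(w) = \text{conf}(w^0)$, so the shape of the confluence diagram of $w$ equals that of the diagram of its highest weight component $w^0$. By Theorem~\ref{thm:kindamain}, $w^0 = \omega(T)$ for a unique $T = \mathcal{T}(w^0)\in\mathcal{GT}_3(n)$, and the two reduction strategies $a(w^0)$ and $b(w^0)$ correspond to the alternating insertions on the levels of $T$, with $a(T)$ beginning with $\omega_1(T)\leftarrow\omega_2(T)$ and $b(T)$ with $\omega_2(T)\leftarrow\omega_3(T)$.

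Next I would invoke the two bounds: Theorem~\ref{thm:main ri len} gives $|a(T)|\leq 4$ and Theorem~\ref{thm:red len le} gives $|b(T)|\leq 3$. Hence the leftmost branch of the confluence diagram consists of at most four rewriting steps and the rightmost branch of at most three, which is exactly the combinatorial shape of~\eqref{eq:confidag end}; when a strategy terminates in fewer steps, the missing arrows are taken to be identities, which is explicitly permitted in the statement. Optimality of these bounds — that the shape $(4,3)$ is genuinely attained — follows from Example~\ref{ex:importan examplos}, where $\omega(T) = 1212\overline{2}$ realizes $\text{conf}(T) = (3,4)$, showing the $3$-cell in~\eqref{eq:confidag end} cannot in general be collapsed to a smaller shape.

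The only point requiring care, rather than a genuine obstacle, is checking that the generic picture with identities allowed really subsumes every case: one must verify that a leftmost sequence of length $\ell\le 4$ (respectively a rightmost one of length $\le 3$) always fits into the four-arrow (respectively three-arrow) path of~\eqref{eq:confidag end} by inserting identity $2$-cells at the appropriate interior vertices, and in particular that the common target produced by both strategies is the normal form $t_0u_0v_0 = [\omega(T)]$ computed by Theorem~\ref{thm:normalofC}. Since \pocol\ is confluent, the two endpoints coincide automatically, so this last verification is routine bookkeeping.
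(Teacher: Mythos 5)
Your proposal is correct and follows essentially the same route as the paper: Squier's theorem applied to the convergent presentation \pocol, reduction to highest-weight sources via Theorem~\ref{thm:confdiags} and the $C$-tree parameterization of Theorem~\ref{thm:kindamain}, and then the length bounds $|a(T)|\le 4$ and $|b(T)|\le 3$ from Theorems~\ref{thm:main ri len} and~\ref{thm:red len le}, with Example~\ref{ex:importan examplos} showing the bounds are attained. The only cosmetic remark is that by the paper's own definition $\text{conf}(w)=(|a(w)|,|b(w)|)$ the example realizes the pair $(4,3)$ rather than $(3,4)$ as written there, but this does not affect the argument.
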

	
	\noindent Thus far we have worked with $Pl^\mathbb{N}(C_n)$ and its convergent presentation \pocol, whose coherent extension is made explicit in Theorem \ref{thm:lastbieq}. We now use this discussion to obtain results about $Pl(C_n)$ and its convergent presentation \pohag.
	
	\smallskip
	\noindent Let $w=tuv\in\textbf{ACol}_\bullet$ the source of a critical branching, i.e. $v\npreceq u\npreceq t$. Let $s_1$ and $s_2$ respectively be the leftmost and rightmost reduction sequences of $w=tuv\in\textbf{ACol}$. From the proof of Theorem \ref{thm:conv poly}, we have reduction sequences $p(s_1)$ and $p(s_2)$ of $w\in\textbf{ACol}_\bullet$. By \eqref{eq:bound lengths} in that proof, and by Theorems \ref{thm:main ri len} and \ref{thm:red len le} we obtain
	\begin{align*}
	|p(s_1)|_\bullet \leq |s_1| \leq 4,\\
	|p(s_2)|_\bullet \leq |s_2| \leq 3.
	\end{align*}
	
	\noindent Note that the leftmost reduction sequence of $w=1212\overline{2}$ in Example \ref{ex:importan examplos} involves no $2-$cell with $\epsilon$ in it, hence the corresponding reduction sequence in $\textbf{ACol}_\bullet$ is also of length $4$.
	
	\begin{exo}
		Let $t=1$,$u=23$,$v=2$ in $\bull$. Clearly we have $v\npreceq u\npreceq t$, and the rightmost reduction sequence of $w=tuv=321$ in $\textbf{ACol}_\bullet$ is as follows
		$$
		s_2: (1)(23)(2)\stackrel{2}{\Longrightarrow} (1)(2)(23)\stackrel{1}{\Longrightarrow}(12)(23)\stackrel{2}{\Longrightarrow}(2)(123),
		$$
		where the label on $\Longrightarrow$ indicates the location in the sequence where we apply a $2-$cell. In particular, we see that $|s|=3$.
	\end{exo}
	\noindent This way, we obtain the following.
	
	\begin{cor}\label{cor:lastbieq}
		The generating $3-$cells of the coherent presentation \pohag\ of $Pl(C_n)$ are of the form
		\begin{equation}\label{eq:confidag end}
		{{\xymatrix{ & t'u'v \ar@{=>}[r] & t'u''v' \ar@{=>}[r] & t''u'''v' \ar@{=>}[rd]\\ tuv \ar@{=>}[ru]^{\alpha_{tu}v} \ar@{=>}[rd]_{t\alpha_{uv}}&  &  &  & t_0u_0v_0 \\ & t u_1 v_1 \ar@{=>}[rr] & &t_1u_2v_1 \ar@{=>}[ur]}}} 
		\end{equation}
		where we allow for some of the arrows to be the identity.
	\end{cor}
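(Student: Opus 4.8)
The plan is to deduce the statement from the shape bound already established for \pocol\ in Theorem \ref{thm:lastbieq} (which itself rests on Theorems \ref{thm:main ri len} and \ref{thm:red len le}), transporting it to \pohag\ along the projection $p:\adm^*\longrightarrow\bull^*$ and using the length comparison \eqref{eq:bound lengths} proved in Theorem \ref{thm:conv poly}. Concretely, let $w=tuv\in\bull$ be the source of a critical branching of \pohag, so that $v\npreceq u\npreceq t$. Since the order $\preceq$ on $\bull$ is exactly the restriction of the order $\preceq$ on $\adm$, the word $w$ is also the source of a critical branching of \pocol. Let $s_1$ and $s_2$ be, respectively, its leftmost and rightmost reduction sequences in \pocol; by Theorems \ref{thm:main ri len} and \ref{thm:red len le} we have $|s_1|\leq 4$ and $|s_2|\leq 3$.

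Next I apply $p$. A $2$-cell of the form $c\epsilon\stackrel{\epsilon}{\Longrightarrow}\epsilon c$ projects to the identity on $c$, whereas any other generating $2$-cell $w\Longrightarrow w'$ of \pocol\ projects to a genuine generating $2$-cell $p(w)\Longrightarrow p(w')$ of \pohag\ by \eqref{eq:first imp}. Hence $p(s_1)$ and $p(s_2)$ are reduction sequences of $w$ in \pohag\ with $|p(s_1)|_\bullet\leq |s_1|\leq 4$ and $|p(s_2)|_\bullet\leq |s_2|\leq 3$, the first inequalities being the right-hand side of \eqref{eq:bound lengths}. Because $w$ has length $3$ and contains no $\epsilon$, the first insertion performed by $p(s_1)$ acts on the left pair $tu$ and the first one performed by $p(s_2)$ acts on the right pair $uv$; since a three-letter word in \pohag\ admits only the two alternating reduction strategies, $p(s_1)$ and $p(s_2)$ must be precisely the leftmost and rightmost reduction sequences of $w$ in \pohag. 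Consequently every critical branching of \pohag\ has a leftmost reduction sequence of length at most $4$ and a rightmost one of length at most $3$, so its confluence diagram has shape at most $(4,3)$ and is of the form \eqref{eq:confidag end}, with some arrows possibly identities. By Squier's theorem the confluence diagrams of the critical branchings of the convergent presentation \pohag\ generate the $3$-cells of its coherent extension, and the claim follows.

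For optimality, the leftmost reduction sequence of $w=1212\overline{2}$ computed in Example \ref{ex:importan examplos} uses no $2$-cell involving $\epsilon$, so it descends to a reduction sequence of length $4$ in \pohag, while the three-step rightmost reduction of $(1)(23)(2)$ displayed just above realizes the bound $3$ with no $\epsilon$-cells; hence $(4,3)$ cannot be improved.

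The one point that deserves care is the identification of $p(s_1)$ and $p(s_2)$ with the leftmost and rightmost reduction sequences of $w$ in \pohag: \emph{a priori}, projecting $s_i$ only yields \emph{some} reduction sequence of $w$ terminating at its normal form, and to control the length of the critical branching's reduction one must know this projected sequence still realizes the corresponding alternating strategy. This is exactly where one uses that $p$ leaves the columns $t,u,v$ and their order untouched and merely erases the $\epsilon$-bookkeeping, together with the fact that on a three-letter word there are only the two alternating reduction strategies; everything else is a direct transcription of results already proved.
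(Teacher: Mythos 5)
Your proof is correct and follows essentially the same route as the paper: it transports the bounds of Theorems \ref{thm:main ri len} and \ref{thm:red len le} from \pocol\ to \pohag\ via the projection $p$ and the length inequality \eqref{eq:bound lengths}, and cites the same two examples for optimality of the shape $(4,3)$. Your explicit justification that $p(s_1)$ and $p(s_2)$ really are the leftmost and rightmost reduction strategies of $w$ in \pohag\ is a point the paper leaves implicit, and it is handled correctly.
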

\begin{small}
	\addcontentsline{toc}{section}{References}
	\bibliographystyle{plain}
	\bibliography{main}
\end{small}
\end{document}